\newcommand{\bbN}{{\mathbb N}}
\newcommand{\bfG}{{\mathbf G}}
\newcommand{\Aut}{\operatorname{Aut}}
\newcommand{\inn}{\operatorname{inn}}
\newcommand{\PGL}{\operatorname{PGL}}
\newcommand{\GL}{\operatorname{GL}}
\newcommand{\Gr}{\operatorname{Gr}}
\newcommand{\LL}{\operatorname{L}^0}
\newcommand{\dd }{\,{\rm d}}
\newcommand{\overto}[1]{{\buildrel{#1}\over\longrightarrow}}
\newcommand{\acts}{\curvearrowright}
\newtheorem{theorem}{Theorem}[section]
\newtheorem{lemma}[theorem]{Lemma}
\newtheorem*{claim}{Claim}
\newtheorem{cor}[theorem]{Corollary}
\newtheorem{proposition}[theorem]{Proposition}
\newtheorem{prop}[theorem]{Proposition}
\theoremstyle{definition}
\newtheorem{defn}[theorem]{Definition}
\newtheorem{remark}[theorem]{Remark}
\numberwithin{equation}{section}
\begin{document}

\title{Super-Rigidity and non-linearity for lattices in products}

\author{Uri Bader}
\address{Weizmann Institute, Rehovot}
\email{bader@weizmann.ac.il}
\thanks{U. Bader was supported in part by the ISF-Moked grant 2095/15 and the ERC grant 306706.}

\author{Alex Furman}
\address{University of Illinois at Chicago}
\email{furman@uic.edu}
\thanks{A. Furman was supported in part by the NSF grant DMS 1611765.}

\subjclass[2010]{Primary 22E40; Secondary 22D40}
\keywords{Super-rigidity, lattices, Polish fields}

\maketitle

\begin{abstract}
We prove a super-rigidity result for algebraic representations over complete fields of irreducible lattices 
in products of groups and lattices with dense commensurator groups.
We derive criteria for the non-linearity of such groups.
\end{abstract}

\section{Introduction}

In this work we consider products of locally compact second countable groups (hereafter {\em lcsc groups}) and their lattices.

\begin{defn}
	Let $T$ be a locally compact second countable (hereafter lcsc) group. 
	A discrete subgroup $\Gamma<T$ is a \emph{lattice} in $T$  
	if the coset space $T/\Gamma$ carries a finite $T$-invariant measure.
	
	Let $T=T_1\times T_2\times \cdots\times T_n$ be a product of $n$ lcsc groups 
	and denote by $\pi_i:T\to T_i$ the projections.
	A subgroup $\Gamma<T$ is said to be a \emph{lattice with dense projections} in $T$ if $\Gamma<T$ is a lattice, 
	and $\pi_i(\Gamma)$ is dense in $T_i$ for every $i\in\{1,2,\dots,n\}$.
\end{defn}

In the above definition $n$ is an arbitrary integer, but the case $n=1$ is redundant (it forces $T$ to be discrete and $\Gamma=T$),
thus we are mostly concerned with $n\geq 2$.
The following theorem extends the case of the classical Super-Rigidity Theorem of Margulis, 
\cite[Chapter VII,~Theorem~(5.6)]{margulis-book} 
that concerns irreducible lattices in a semisimple Lie group of higher rank.
The case of a lattice in a simple Lie group of higher rank is treated in \cite{BF:Margulis}.
The target group $\mathbf{G}(k)$ below is defined over 
a valued field $k$ that need not be a local field. 
The notion of being \emph{bounded} for subsets of such a group $\mathbf{G}(k)$ 
-- a \emph{group bornology} as defined by Bruhat--Tits \cite{Bruhat+Tits}*{(3.1.1)} -- 
is discussed in detail in \cite{BDL}*{\S 4}.

\begin{theorem}[Super-Rigidity for lattices in products] \label{thm:lattice}\hfill{}\\
Let $T=T_1\times T_2\times \cdots\times T_n$ be a product of lcsc groups,
and let $\Gamma<T$ be a lattice with dense projections.
Let $k$ be a field with an absolute value.
Assume that as a metric space $k$ is complete.
Let $G=\mathbf{G}(k)$ be the $k$-points of a connected, adjoint, $k$-simple algebraic group.
Let $\rho:\Gamma \to G$ be a homomorphism.
Assume $\rho(\Gamma)$ is Zariski dense and unbounded in $G$.

Then there exists a unique continuous homomorphism $\bar{\rho}:T \to G$ satisfying $\rho=\bar{\rho}|_\Gamma$.
Moreover, that homomorphism is of the form $\bar{\rho}=\bar{\rho}_i\circ \pi_i$ for some (necessarily unique) $i\in\{1,\ldots,n\}$
and some (necessarily unique) continuous homomorphism $\bar{\rho}_i:T_i\to G$.
\end{theorem}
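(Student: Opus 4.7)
The plan is to follow a Margulis-style boundary-theoretic strategy, adapted to the product setting in the spirit of Bader--Shalom and to complete valued fields in the spirit of the authors' earlier work \cite{BF:Margulis}. First, I would choose spread-out admissible probability measures $\mu_i$ on each $T_i$ and form $\mu=\mu_1\otimes\cdots\otimes\mu_n$. The Furstenberg--Poisson boundary $(B,\nu)$ of $(T,\mu)$ then splits as a product $B=B_1\times\cdots\times B_n$ with each $B_i$ a $(T_i,\mu_i)$-boundary; because $\Gamma<T$ is a lattice, $(B,\nu)$ is simultaneously a $\Gamma$-boundary on which $\Gamma$ acts amenably and metrically ergodically. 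This product boundary is the standard replacement for the single Poisson boundary used in Margulis's original argument.

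Next, using amenability of $B$, Zariski density of $\rho(\Gamma)$, and the theory of algebraic representations of ergodic actions with target a $k$-algebraic group over a complete valued field, I would produce a measurable $\Gamma$-equivariant map $\phi\colon B\to (G/H)(k)$, where $H<G$ is a $k$-algebraic subgroup chosen minimal among those admitting such a boundary map. The unboundedness of $\rho(\Gamma)$, together with the bornology on $G=\mathbf{G}(k)$ described in \cite{BDL}, forces $H$ to be proper, so that $\phi$ is nonconstant; minimality together with the $k$-simplicity of $G$ then constrains $H$ to sit inside a proper $k$-parabolic of $G$.

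I expect the main obstacle to be the factorization step: showing that $\phi$ depends, modulo null sets, on only one coordinate $b_{i_0}\in B_{i_0}$. For each $j$, the strategy is to consider the conditional algebraic hull of $\phi$ in the $B_j$-direction with the other coordinates held fixed; this produces a $\Gamma$-equivariantly assigned $k$-algebraic subvariety of $G/H$, whose associated stabilizer is a $\Gamma$-invariant and hence, by Zariski density of $\rho(\Gamma)$, normal $k$-subgroup of $G$. By the $k$-simplicity of $G$ this subgroup is either trivial or all of $G$, and the latter is ruled out by the minimality chosen for $H$; therefore $\phi$ is independent of $b_j$. Running this argument across all $j$ yields a single index $i_0$ and a $\Gamma$-equivariant map $\phi_{i_0}\colon B_{i_0}\to (G/H)(k)$.

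Finally, since $\pi_{i_0}(\Gamma)$ is dense in $T_{i_0}$ and the factors $T_j$ with $j\neq i_0$ act trivially on $B_{i_0}$, the $\Gamma$-equivariance of $\phi_{i_0}$ upgrades to full $T$-equivariance with the $T$-action factoring through $\pi_{i_0}$. Pairing $\phi_{i_0}$ with the analogous boundary map coming from the reflected step distribution yields a measurable $T$-equivariant map into the generic $G$-orbit of ``opposite'' coset pairs, whose stabilizer is a Levi subgroup $L<G$; invoking the cocycle-rigidity machinery of \cite{BF:Margulis} over the complete field $k$ then converts this into a continuous homomorphism $\bar\rho_{i_0}\colon T_{i_0}\to G$ satisfying $\bar\rho_{i_0}\circ\pi_{i_0}|_\Gamma=\rho$. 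Setting $\bar\rho=\bar\rho_{i_0}\circ\pi_{i_0}$ gives the desired continuous extension; uniqueness of $\bar\rho$, of $i_0$, and of $\bar\rho_{i_0}$ all follow from Zariski density of $\rho(\Gamma)$ together with density of $\pi_{i_0}(\Gamma)$ in $T_{i_0}$.
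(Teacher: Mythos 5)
Your skeleton (amenable, metrically ergodic boundaries for each factor; a minimal coset space $\mathbf{G}/\mathbf{H}$ receiving a $\Gamma$-equivariant map; properness of $\mathbf{H}$ from unboundedness; Zariski density plus $k$-simplicity to kill normal subgroups) matches the paper's. But the two steps that actually carry the theorem are gapped. First, the factorization step. Your claim is that the ``conditional algebraic hull in the $B_j$-direction'' has a stabilizer that is ``$\Gamma$-invariant and hence normal.'' Equivariance of the assignment $b\mapsto \mathbf{V}_b$ only makes the stabilizers vary equivariantly; ergodicity puts them in a single conjugacy class and yields a $\Gamma$-map into a coset space $\mathbf{G}/N_{\mathbf{G}}(\mathbf{H}')(k)$, and to conclude normality you must show \emph{that} map is essentially constant --- which is essentially the statement you are trying to prove. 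The paper never factors the boundary map through a single $B_{i_0}$. Instead it runs the gate machinery on the spaces $T_i\times B_J$ (group factor times partial boundary), chooses $J$ of \emph{minimal size} with a non-trivial gate, and uses the commuting right $T_i$-action together with the functoriality of the gate (Theorem~\ref{yoneda}) to produce a continuous homomorphism $\sigma:T_i\to N_{\mathbf{G}}(\mathbf{H})/\mathbf{H}(k)$; the minimality of $|J|$ is what rules out the map descending to a smaller space, and a separate argument with the diagonal $\Delta_{\mathbf{G}}<\mathbf{G}\times\mathbf{G}$ (self-normalizing since $\mathbf{G}$ is adjoint) kills the residual $B_J$-dependence.

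Second, and more seriously, your passage from measurable boundary data to the continuous homomorphism $\bar\rho_{i_0}:T_{i_0}\to G$ via ``opposite coset pairs, whose stabilizer is a Levi subgroup'' and ``the cocycle-rigidity machinery of \cite{BF:Margulis}'' does not apply here. That mechanism needs $B_{i_0}$ to be identified with a homogeneous space of $T_{i_0}$ (so that $B_{i_0}\times \check B_{i_0}$ has a generic orbit isomorphic to $T_{i_0}/L$ and rational-map arguments apply); in the present generality $T_{i_0}$ is an arbitrary lcsc group and $B_{i_0}$ is an abstract Lebesgue space, so there is no such structure, and $k$ is an arbitrary complete valued field. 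The paper circumvents this entirely: because the minimal gate lives on $T_i\times B_J$, after showing $\mathbf{H}=\{e\}$ one has a measurable map $\phi:T_i\to \mathbf{G}(k)$ satisfying $\phi(\gamma t t')=\rho(\gamma)\phi(t)\sigma(t')$, and the elementary Lemma~\ref{lem:extension} (a cocycle-reduction computation, not boundary theory) gives $\rho=\inn(g)\circ\sigma\circ\pi_i$. Without some substitute for this ``group factor inside the Lebesgue space'' device, your plan has no route from a $\Gamma$-map defined on a boundary to a homomorphism defined on $T_{i_0}$.
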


The proof of Theorem~\ref{thm:lattice} will be given in \S\ref{section:mainthm}.
Similar Super-Rigidity theorems for lattices in products were given by Monod \cite{Monod-products} 
and Gelander--Karlsson--Margulis \cite{GKM} 
under the further assumption that $\Gamma<T$ is cocompact (or satisfies a non-trivial integrability condition)
and by Caprace--Monod \cite[Theorems 5.1, 5.6]{CM09} under a finite generation assumption on $\Gamma$
(note also that the first two references consider a wider class of possible targets).
No integrability or finite generation assumptions are forced on $\Gamma$ in Theorem~\ref{thm:lattice}.
This is a hint that the above theorem could be generalized to the setting of a cocycle,
which is indeed the case.
However, we choose not to elaborate on this point here.

A nice application of the theory of lattices in products is given by Caprace and Monod
in the following Lemma.

\begin{lemma} [{Caprace--Monod, \cite{CM09}*{Lemma~5.15}}]
Let $S$ be a locally compact group, $\Gamma<S$ a lattice
and $i:\Lambda\hookrightarrow S$ a countable dense subgroup containing and commensurating $\Gamma$.
Then there exists a locally compact group $S'$ and a homomorphism $\theta:\Lambda\to S'$ such that $\theta(\Gamma)$ is precompact in $S'$ and $i\times\theta(\Lambda)<S\times S'$ is a lattice with dense projections.
\end{lemma}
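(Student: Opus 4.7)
The plan is to realize $S'$ as the Schlichting (relative profinite) completion of $\Lambda$ with respect to $\Gamma$. Concretely, consider the left multiplication action of $\Lambda$ on the coset space $\Lambda/\Gamma$ and the resulting homomorphism $\Lambda \to \Sym(\Lambda/\Gamma)$, where $\Sym(\Lambda/\Gamma)$ carries the topology of pointwise convergence (with $\Lambda/\Gamma$ discrete). Define $S'$ to be the closure of the image of $\Lambda$ in this topological group, and let $\theta:\Lambda\to S'$ be the induced homomorphism. Then $\theta(\Lambda)$ is dense in $S'$ by construction.

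The first task is to show that $S'$ is locally compact and that $K:=\overline{\theta(\Gamma)}$ is a compact open subgroup containing $\theta(\Gamma)$. The commensuration hypothesis enters precisely here: for any $\lambda\in\Lambda$, the $\Gamma$-orbit of $\lambda\Gamma\in\Lambda/\Gamma$ has cardinality $[\Gamma:\Gamma\cap\lambda\Gamma\lambda^{-1}]<\infty$. Consequently $\theta(\Gamma)$ sits inside the compact product $\prod_{O}\Sym(O)$, indexed by the $\Gamma$-orbits $O$ in $\Lambda/\Gamma$, so its closure $K$ is compact. Furthermore $K$ coincides with the stabilizer in $S'$ of the point $\Gamma\in\Lambda/\Gamma$, a basic open set in the pointwise topology, and is therefore also open. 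In particular $\theta(\Gamma)\subset K$ is precompact, and $S'$ is locally compact second countable.

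The main step is then to verify that $\Delta:=(i\times\theta)(\Lambda)$ is a lattice in $S\times S'$ with dense projections. Density of both projections is immediate. For discreteness, choose a neighborhood $U$ of $e\in S$ with $U\cap\Gamma=\{e\}$: if $(i(\lambda),\theta(\lambda))\in U\times K$ then $\theta(\lambda)\in K$ forces $\lambda\Gamma=\Gamma$, hence $\lambda\in\Gamma$, and then $i(\lambda)\in U\cap\Gamma=\{e\}$ forces $\lambda=e$. For finite covolume, fix a Borel fundamental domain $F\subset S$ for the right $\Gamma$-action; I claim $F\times K$ surjects onto $(S\times S')/\Delta$. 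Given $(s,s')\in S\times S'$, density of $\theta(\Lambda)$ together with openness of $K$ yields $\lambda_0\in\Lambda$ with $s'\theta(\lambda_0)\in K$; then right multiplication by $(i(\gamma),\theta(\gamma))$ for a suitable $\gamma\in\Gamma$ moves $si(\lambda_0)$ into $F$ while keeping the $S'$-coordinate inside $K$, since $\theta(\gamma)\in K$ and $K$ is a subgroup. The resulting covolume is bounded by $\vol(F)\vol(K)<\infty$.

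I expect the main obstacle to be the construction of $S'$ itself — specifically, showing local compactness. That is where commensuration is used in an essential way, via the finite $\Gamma$-orbits on $\Lambda/\Gamma$. Once this is in place, discreteness and the fundamental-domain argument above are routine.
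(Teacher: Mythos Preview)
Your proposal is correct. The paper does not actually prove this lemma---it is quoted from Caprace--Monod \cite{CM09}*{Lemma~5.15}---but it does remark that $S'$ is ``the completion of $\Lambda$ with respect to the left uniform structure generated by conjugates of $\Gamma$,'' which is exactly the Schlichting completion you construct concretely as the closure of $\Lambda$ in $\Sym(\Lambda/\Gamma)$; your verification that $K=\overline{\theta(\Gamma)}$ is compact open (via finiteness of $\Gamma$-orbits on $\Lambda/\Gamma$), that $\Delta$ is discrete (via $K=\Stab_{S'}(\Gamma)$), and that $F\times K$ covers $(S\times S')/\Delta$ are all sound and constitute the standard argument.
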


The group $S'$ appearing in the above lemma is the completion of $\Lambda$ with respect to the left uniform structure generated by conjugates of $\Gamma$.
Applying Theorem~\ref{thm:lattice} with $T_1=S$, $T_2=S'$ and $T=T_1\times T_2$ we readily get the following corollary.

\begin{cor} [Super-Rigidity for commensurators] \label{cor:com}\hfill{}\\
Let $T$ be a lcsc group,
and let $\Gamma<T$ be a lattice.
Let $\Lambda$ be a countable dense subgroup of $T$ containing and commensurating $\Gamma$.
Let $k$ be a field with an absolute value.
Assume that as a metric space $k$ is complete.
Let $G$ be the $k$-points of a connected, adjoint, $k$-simple algebraic group.
Let $\rho:\Lambda \to G$ be a homomorphism.
Assume $\rho(\Lambda)$ is Zariski dense and $\rho(\Gamma)$ is unbounded in $G$.

Then there exists a unique continuous homomorphism $\bar{\rho}:T\to G$
such that $\rho=\bar{\rho}|_\Lambda$.
\end{cor}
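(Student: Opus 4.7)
The plan is to apply Theorem~\ref{thm:lattice} to the product structure supplied by the Caprace--Monod lemma. First, invoke the lemma with $S=T$ to obtain a lcsc group $S'$ and a homomorphism $\theta:\Lambda\to S'$ such that $\theta(\Gamma)$ is precompact in $S'$ and the diagonal
\[
\Lambda' \defq \{(\lambda,\theta(\lambda))\mid \lambda\in\Lambda\}
\]
is a lattice with dense projections in the product $\tilde T\defq T\times S'$.

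Next, push $\rho$ to $\Lambda'$: define $\rho':\Lambda'\to G$ by $\rho'(\lambda,\theta(\lambda))\defq\rho(\lambda)$, which is well defined since $i:\Lambda\hookrightarrow T$ is injective. Then $\rho'(\Lambda')=\rho(\Lambda)$ is Zariski dense in $G$ by hypothesis, and it contains $\rho(\Gamma)$, which is unbounded. Thus the hypotheses of Theorem~\ref{thm:lattice} are met, producing a unique continuous homomorphism $\overline{\rho'}:\tilde T\to G$ extending $\rho'$, which moreover factors as $\overline{\rho'}=\bar\rho_j\circ\pi_j$ for a unique $j\in\{1,2\}$ and a unique continuous $\bar\rho_j:T_j\to G$ (with $T_1=T$ and $T_2=S'$).

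The main point is to rule out the case $j=2$: if $\overline{\rho'}$ were to factor through $\pi_2:\tilde T\to S'$, then for every $\gamma\in\Gamma$ we would have
\[
\rho(\gamma)=\rho'(\gamma,\theta(\gamma))=\bar\rho_2(\theta(\gamma)),
\]
and since $\theta(\Gamma)$ is precompact in $S'$ and $\bar\rho_2$ is continuous, $\rho(\Gamma)$ would be precompact, hence bounded in $G$, contradicting the hypothesis. Therefore $j=1$, and setting $\bar\rho\defq\bar\rho_1:T\to G$ we have $\bar\rho(\lambda)=\overline{\rho'}(\lambda,\theta(\lambda))=\rho(\lambda)$ for every $\lambda\in\Lambda$, as desired. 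Uniqueness of $\bar\rho$ follows from the density of $\Lambda$ in $T$ and the continuity of the two extensions, so no separate argument is needed. The only substantive step is the precompactness/unboundedness dichotomy above; everything else is bookkeeping on top of Theorem~\ref{thm:lattice} and the Caprace--Monod lemma.
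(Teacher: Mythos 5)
Your proof is correct and follows exactly the route the paper intends: the paper simply states that Corollary~\ref{cor:com} follows by applying Theorem~\ref{thm:lattice} to the lattice $i\times\theta(\Lambda)<T\times S'$ supplied by the Caprace--Monod lemma, and you have filled in the details it leaves implicit, in particular the key step of excluding the factor $S'$ via the precompactness of $\theta(\Gamma)$ versus the unboundedness of $\rho(\Gamma)$. Nothing further is needed.
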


This corollary is a generalization of another well known Theorem of Margulis, \cite[Chapter VII,~Theorem~(5.4)]{margulis-book}.
Here too, similar theorems were proved by Monod \cite[Theorem~A.1]{Monod-products},
Gelander--Karlsson--Margulis \cite[Theorem~8.1]{GKM} and Caprace--Monod \cite[Theorems 5.17]{CM09} 
under further assumptions on $\Gamma$ and $\Lambda$.

Note that Theorem~\ref{thm:lattice} and Corollary~\ref{cor:com} are new even in the case where $T$
is a semisimple group over a local field, as $k$ is not assumed to be a local field.
The analogous Super-Rigidity Theorems of Margulis
are cornerstones in his celebrated proof of his
Arithmeticity Theorems~\cite[Chapter~IX~(1.9),~Theorems~(A) and (B)]{margulis-book}.
However, due to the standing assumption that the target group is defined over a local field, 
he is forced to assume in the aforementioned Theorems (A) and (B) 
that the lattice $\Gamma$ is finitely generated.
Our new versions of these theorems show that these finite generation issues could be circumvented.
We thank T.N. Venkataramana for pointing out this application to us.
We should note that these assumptions were later removed, in case (A) by Raghunathan, 
\cite{Raghunathan} and in case (B) by Lifschitz \cite{Lifschitz}.
However, using our Theorem~\ref{thm:lattice} in case (A) and Corollary~\ref{cor:com} in case (B) is easier and avoids 
the detailed discussion of the shape of the cusps in locally symmetric spaces of function fields taken 
in \cite{Raghunathan} and relied upon in \cite{Lifschitz}.
We will elaborate further on this point in a forthcoming paper.

Theorem~\ref{thm:lattice} and Corollary~\ref{cor:com} have striking applications in case the group $T$ is not linear.
We say that a group $\Gamma$ is {\em solvable-by-locally finite}
if there exists a normal solvable subgroup $S \lhd \Gamma$ such that $\Gamma/S$ is locally finite, 
i.e. it is a directed union of finite subgroups.
We say that topological group $H$ is \emph{amenable} if every continuous $H$-action on a compact space
has an invariant probability measure.

\begin{theorem} \label{thm:nonlinearity}\hfill{}\\
Let $T=T_1\times T_2\times \cdots\times T_n$ be a product of lcsc groups,
and let $\Gamma<T$ be a lattice with dense projections.
Assume that for every continuous homomorphism $\psi:T\to \GL_d(k)$, where $k$ is a complete field with absolute value 
and $d$ is an integer,
the closure of the image, $\overline{\psi(T)}$, is amenable.
Then for every integer $d$, field $K$ and linear representation $\phi:\Gamma \to \GL_d(K)$, the image $\phi(\Gamma)$ 
is solvable-by-locally finite.

Furthermore, if $\Gamma$ is assumed to be finitely generated, the class of fields considered 
in the target of $\psi$ can be taken to be the class of local fields
and then the image $\phi(\Gamma)$ is solvable by finite.
\end{theorem}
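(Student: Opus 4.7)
The plan is to leverage Theorem~\ref{thm:lattice} by reducing the problem of linearity over an arbitrary $K$ to extensions of $T$ into simple adjoint algebraic targets, where the amenability hypothesis will yield a contradiction. Given $\phi: \Gamma \to \GL_d(K)$, I would first let $\mathbf{H}$ be the Zariski closure of $\phi(\Gamma)$ and replace $\Gamma$ by a finite-index subgroup so that $\mathbf{H}$ is connected; this also requires replacing $T$ by an open finite-index subgroup $T'$, and the amenability hypothesis on $T$ transfers to $T'$ via induced representations. Writing $\mathbf{R}$ for the solvable radical of $\mathbf{H}$, the kernel $\phi(\Gamma) \cap \mathbf{R}(K)$ is solvable. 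Passing to the adjoint quotient $(\mathbf{H}/\mathbf{R})^{\mathrm{ad}} = \prod_j \mathbf{G}_j$, a product of $K$-simple adjoint factors with finite central kernel, reduces the problem to proving: for each $K$-simple, adjoint, connected $\mathbf{G}$ and each Zariski-dense homomorphism $\rho : \Gamma \to \mathbf{G}(K)$, the image $\rho(\Gamma)$ is locally finite.

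The core step is to find a complete valued field $k$ in which $\rho(\Gamma)$ is unbounded. Suppose $\rho(\Gamma)$ is not locally finite. Then it contains a finitely generated infinite subgroup, which by standard linear-group theory contains an element $\rho(\gamma_0)$ of infinite order. If some eigenvalue $\lambda$ of $\rho(\gamma_0)$ is not a root of unity, then by a version of Kronecker's theorem for finitely generated fields, the finitely generated subfield of $K$ generated by the entries of $\rho(\gamma_0)$ admits an absolute value at which $|\lambda| \neq 1$; otherwise $\rho(\gamma_0)$ has nontrivial unipotent part (only possible in characteristic zero), and any archimedean place of the ambient field serves. Extending the absolute value to all of $K$ and completing, I would obtain a complete $k$ with $K \hookrightarrow k$ such that $\rho(\Gamma) \subset \mathbf{G}(k)$ is both Zariski dense and unbounded.

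Theorem~\ref{thm:lattice} then provides a continuous extension $\bar\rho : T \to \mathbf{G}(k)$ with $\bar\rho(T) \supseteq \rho(\Gamma)$ Zariski dense and unbounded; the closure $\overline{\bar\rho(T)}$ inherits both properties. The standing hypothesis on $T$ forces this closure to be amenable. However, a closed, Zariski-dense, amenable subgroup of $\mathbf{G}(k)$ cannot be unbounded when $\mathbf{G}$ is connected, adjoint and $k$-simple -- this is the complete-field analogue of the classical fact that amenable Zariski-dense subgroups of isotropic semisimple groups over local fields are relatively compact, requiring the bornology of \cite{BDL} in place of local compactness. This contradicts unboundedness and proves $\rho(\Gamma)$ is locally finite.

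For the ``furthermore'' part, if $\Gamma$ is finitely generated then so is $\rho(\Gamma)$, whence the subfield of $K$ generated by its matrix entries is finitely generated; its completions at any place are local fields, so the field $k$ in the argument above may be taken local. Since a finitely generated locally finite group is finite, the conclusion upgrades from ``solvable-by-locally finite'' to ``solvable-by-finite.'' The main technical obstacle is the amenability-versus-Zariski-density deduction of the third paragraph: one must know that over an arbitrary complete valued field, Zariski-dense amenable subgroups of a simple adjoint algebraic group are bounded, a statement that is standard (via Furstenberg's lemma) in the locally compact setting but requires the full bornological framework of \cite{BDL} in this generality.
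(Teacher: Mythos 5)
Your proposal is correct and follows essentially the same route as the paper: reduce the arbitrary representation over $K$ to a Zariski-dense, unbounded homomorphism of a finite-index subgroup into a connected adjoint $k$-simple group over a complete valued field, extend it to $T'$ by Theorem~\ref{thm:lattice}, and contradict the amenability hypothesis via the boundedness of Zariski-dense amenable closed subgroups (the paper's Lemma~\ref{lem:amen}) together with induction of the representation back to $T$. The paper packages your first two paragraphs as a separate linearity criterion (Theorem~\ref{lem:nonlinearity} and Corollary~\ref{cor:nonlinearity}, whose proof uses the Breuillard--Gelander elaboration of the same Tits eigenvalue argument you sketch); the only slip is in your justification of the ``furthermore'' clause --- it is not true that every place of a finitely generated field has locally compact completion, but the Tits/Breuillard--Gelander lemma does produce a place whose completion can be taken to be a local field, which is what is needed.
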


\begin{cor} \label{cor:nonlinearitycom}\hfill{}\\
Let $T$ be a lcsc group,
and let $\Gamma<T$ be a lattice.
Let $\Lambda$ be a countable dense subgroup of $T$ containing and commensurating $\Gamma$.
Assume that for every continuous homomorphism $\psi:T\to \GL_d(k)$, where $k$ is a complete field with an absolute 
value and $d$ is an integer,
the closure of the image, $\overline{\psi(T)}$, is amenable.
Then for every integer $d$, field $K$ and linear representation $\phi:\Lambda \to \GL_d(K)$, the image $\phi(\Gamma)$ 
is solvable-by-locally finite.

Furthermore, if $\Lambda$ is assumed to be finitely generated, the class of fields considered in 
the target of $\psi$ could be taken to be the class of local fields
and then the image $\phi(\Gamma)$ is solvable by finite.
\end{cor}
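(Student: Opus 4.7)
The plan is to mimic the proof of Theorem~\ref{thm:nonlinearity}, substituting Corollary~\ref{cor:com} for Theorem~\ref{thm:lattice}. Suppose for contradiction that $\phi:\Lambda\to\GL_d(K)$ is a homomorphism with $\phi(\Gamma)$ not solvable-by-locally finite, and work toward producing a representation to which Corollary~\ref{cor:com} applies.

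First I reduce to a target that is the $K$-points of a connected adjoint $K$-simple algebraic group. Let $\mathbf{H}$ be the Zariski closure of $\phi(\Gamma)$ in $(\GL_d)_K$. Because $\Lambda$ commensurates $\Gamma$, for each $\lambda\in\Lambda$ the subgroups $\phi(\Gamma)$ and $\phi(\lambda)\phi(\Gamma)\phi(\lambda)^{-1}$ are commensurable as images of the commensurable subgroups $\Gamma$ and $\lambda\Gamma\lambda^{-1}$ of $\Lambda$, hence share the Zariski-identity component $\mathbf{H}^\circ$; thus $\phi(\Lambda)$ normalizes $\mathbf{H}^\circ$. If $\mathbf{H}^\circ$ were solvable then $\phi(\Gamma)\cap\mathbf{H}^\circ(K)$ would be a finite-index solvable subgroup of $\phi(\Gamma)$, contradicting our standing assumption; so $\mathbf{H}^\circ$ is non-solvable. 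Dividing out its solvable radical and projecting onto one of the $K$-simple adjoint factors of the resulting semisimple quotient, while passing to the finite-index subgroup $\Lambda'<\Lambda$ whose image stabilizes this factor under the permutation action of $\phi(\Lambda)$ on the finitely many simple factors, yields a homomorphism $\rho:\Lambda'\to\mathbf{G}(K)$ into a connected adjoint $K$-simple algebraic group $\mathbf{G}$ with $\rho(\Gamma')$ Zariski-dense in $\mathbf{G}$, where $\Gamma':=\Gamma\cap\Lambda'$.

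Next I arrange the target over a complete valued field. Using the bornology framework of \cite{BDL}, one produces a complete valued field $k\supseteq K$ in which $\rho(\Gamma')$ is unbounded in $\mathbf{G}(k)$; this is the step I expect to be the main obstacle, because classical embedding techniques into local fields require finite generation of $\phi(\Lambda)$, which is not available in general. Since $\overline{\Lambda'}\subseteq T$ is an open subgroup of finite index in which $\Gamma'$ remains a lattice and $\Lambda'$ is a countable dense commensurating subgroup, Corollary~\ref{cor:com} applies to $\rho$ and produces a continuous homomorphism $\bar\rho:\overline{\Lambda'}\to\mathbf{G}(k)$ extending $\rho$. By hypothesis the closure $\overline{\bar\rho(\overline{\Lambda'})}$ is amenable, but it contains the Zariski-dense $\rho(\Lambda')$ and the unbounded $\rho(\Gamma')$, so it is a closed, Zariski-dense, unbounded subgroup of the connected adjoint $k$-simple group $\mathbf{G}(k)$. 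Such a subgroup cannot be amenable: a proper parabolic $k$-subgroup $\mathbf{P}<\mathbf{G}$ would absorb any invariant probability measure on the flag variety $\mathbf{G}(k)/\mathbf{P}(k)$, contradicting Zariski density, as developed in the complete-valued-field setting of \cite{BDL}. This contradiction proves the first assertion.

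For the finitely generated refinement, when $\Lambda$ (and hence $\phi(\Lambda)$) is finitely generated, the image $\phi(\Lambda)$ lies in a finitely generated subfield of $K$, and classical embedding results (Tits, and Breuillard--Gelander in positive characteristic) allow one to take $k$ in the construction above to be a local field, matching the weaker amenability hypothesis available in that case. The conclusion then upgrades from solvable-by-locally finite to solvable-by-finite for $\phi(\Gamma)$ by standard finiteness arguments: a locally finite linear group in characteristic zero is abelian-by-finite by Jordan--Schur, and analogous (or local-field-specific) arguments handle the positive characteristic case, so any solvable-by-locally finite subgroup of the finitely generated linear group $\phi(\Lambda)$ is solvable-by-finite.
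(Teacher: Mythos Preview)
Your reduction step has a genuine gap. You take $\mathbf{H}$ to be the Zariski closure of $\phi(\Gamma)$ and correctly argue that $\phi(\Lambda)$ normalizes $\mathbf{H}^\circ$. But the quotient map $\mathbf{H}^\circ\to\mathbf{H}^\circ/\mathbf{R}\to\mathbf{G}$ is defined on $\mathbf{H}^\circ$, and $\phi(\Lambda)$ does not land in $\mathbf{H}^\circ$; it merely normalizes it. So your construction does not produce a homomorphism $\rho:\Lambda'\to\mathbf{G}(K)$ at all --- at best you get a conjugation action $\Lambda'\to\Aut(\mathbf{G})(K)$, which is not what Corollary~\ref{cor:com} needs (it requires $\rho(\Lambda')$ Zariski dense in $\mathbf{G}$, not merely acting on it). The paper avoids this by invoking Theorem~\ref{lem:nonlinearity} as a black box; inside that theorem the Zariski closure is taken of $\phi(\Lambda)$, one sets $\Lambda'=\phi^{-1}(\mathbf{H}^0(K))$, and then the quotient-and-project construction is a genuine homomorphism on $\Lambda'$. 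The Zariski density of $\rho(\Gamma')$ then follows from Lemma~\ref{lem:Zdense}, and the ``main obstacle'' you flag --- producing a complete valued field $k$ with $\rho(\Gamma')$ unbounded, without finite generation --- is handled by Lemma~\ref{lem:polishfield} (which in turn rests on Lemma~\ref{lem:BG}).

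There is a second, smaller gap. The amenability hypothesis is stated for continuous homomorphisms $\psi:T\to\GL_d(k)$, but your $\bar\rho$ is defined only on the finite-index open subgroup $T':=\overline{\Lambda'}$, so you cannot invoke the hypothesis directly as you do. The paper fixes this by first embedding $\mathbf{G}(k)\hookrightarrow\GL_{d'}(k)$ and then inducing from $T'$ to $T$ to obtain $\psi:T\to\GL_{d''}(k)$; the hypothesis applies to $\psi$, and non-amenability of $\overline{\bar\rho(T')}$ (established via Lemma~\ref{lem:amen}, not via the flag-variety argument you sketch) forces $\overline{\psi(T)}$ to be non-amenable, giving the contradiction.
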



\begin{cor} \label{cor:newamenable}\hfill{}\\
Assume given for every $i=1,\ldots,n$ a totally disconnected lcsc group $T_i$ in which for every closed non-trivial normal subgroup the quotient group is amenable.
Let $T=T_1\times T_2\times \cdots\times T_n$
and let $\Gamma<T$ be a lattice with dense projections such that for each $i$, 
$\pi_i|_\Gamma$ is injective (and dense in $T_i$).
Assume that $T_1$ is non-amenable and for every continuous homomorphism $\psi:T_1\to \GL_d(k)$, where $k$ is a complete field with an absolute value and $d$ is an integer,
the closure of the image, $\overline{\psi(T_1)}$, is amenable.
Then for every integer $d$, field $K$ and linear representation $\phi:\Gamma \to \GL_d(K)$, the image $\phi(\Gamma)$ is solvable-by-locally finite. 

Furthermore, if $\Gamma$ is assumed to be finitely generated, the class of fields considered in the target of $\psi$ could be taken to be the class of local fields
and then the image $\phi(\Gamma)$ is solvable by finite.
\end{cor}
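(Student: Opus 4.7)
\emph{Proof plan.}  I would derive the corollary from Theorem~\ref{thm:nonlinearity} by verifying its hypothesis for our $T$: that every continuous homomorphism $\psi:T\to\GL_d(k)$ with $k$ a complete valued field has amenable image closure $\overline{\psi(T)}$.  Given such $\psi$, pass to the Zariski closure of $\psi(T)$ in $\GL_d$, mod out by its solvable radical, and project to a $k$-simple factor of the resulting semisimple quotient.  This reduces matters to the following: for every continuous $\tilde\psi:T\to\mathbf{G}(k)$ with $\mathbf{G}$ connected, adjoint, $k$-simple and $\tilde\psi(T)$ Zariski dense in $\mathbf{G}$, the group $\mathbf{G}$ must be $k$-anisotropic (whence $\mathbf{G}(k)$ is bounded and $\overline{\tilde\psi(T)}$ amenable).

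Set $\tilde\psi_i:=\tilde\psi|_{T_i}$.  The images $\tilde\psi_i(T_i)$ pairwise commute, so their Zariski closures $\mathbf{L}_i\leq\mathbf{G}$ are pairwise commuting algebraic subgroups whose product exhausts $\mathbf{G}$; since $\mathbf{G}$ is simple, a Goursat-type argument forces exactly one $\mathbf{L}_j$ to equal $\mathbf{G}$ with the others trivial, so $\tilde\psi=\tilde\psi_j\circ\pi_j$ for a unique $j$ with $\tilde\psi_j$ Zariski dense.  Assume for contradiction $\mathbf{G}$ is $k$-isotropic.  If $j=1$, the hypothesis on $T_1$ renders $\overline{\tilde\psi_1(T_1)}$ amenable and hence its Zariski closure $\mathbf{G}$ amenable, contradicting $k$-isotropy.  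If $j\geq 2$ and $\ker\tilde\psi_j\neq\{e\}$, the hypothesis on $T_j$ gives $T_j/\ker\tilde\psi_j$ amenable and the same Zariski-closure argument yields a contradiction.

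The remaining subcase is $j\geq 2$ with $\tilde\psi_j:T_j\hookrightarrow\mathbf{G}(k)$ a continuous injection.  I would use the injectivity of $\pi_1|_\Gamma$ to view the $\Gamma$-representation $\gamma\mapsto\tilde\psi_j(\pi_j(\gamma))$ as a homomorphism $\sigma:\pi_1(\Gamma)\to\mathbf{G}(k)$ from the dense subgroup $\pi_1(\Gamma)\leq T_1$; since $\pi_j(\Gamma)$ is dense in $T_j$, the image $\sigma(\pi_1(\Gamma))$ is Zariski dense and unbounded in $\mathbf{G}(k)$.  A continuous extension of $\sigma$ to $T_1$ would then have non-amenable image closure, contradicting the hypothesis on $T_1$.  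The principal obstacle is producing such an extension: a direct appeal to Theorem~\ref{thm:lattice} to the $\Gamma$-representation only recovers the factorization through $\pi_j$ that we began with, so a more delicate super-rigidity argument of commensurator type, or the cocycle generalization of Theorem~\ref{thm:lattice} alluded to in the paper, is needed to promote $\sigma$ to a continuous homomorphism on $T_1$.  The ``furthermore'' clause follows by the same argument with $k$ restricted to local fields, using the second assertion of Theorem~\ref{thm:nonlinearity}.
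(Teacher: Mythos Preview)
Your strategy of deducing Corollary~\ref{cor:newamenable} from Theorem~\ref{thm:nonlinearity} by verifying its hypothesis for $T$ has a genuine and unfixable gap. The hypothesis of Theorem~\ref{thm:nonlinearity} need not hold under the assumptions of Corollary~\ref{cor:newamenable}: the condition on $T_j$ for $j\geq 2$ (every quotient by a non-trivial closed normal subgroup is amenable) is vacuous when $T_j$ is topologically simple, so nothing prevents $T_j$ from being, say, $\PGL_m(\mathbb{Q}_p)$, in which case $\pi_j:T\to T_j\hookrightarrow\GL_d(\mathbb{Q}_p)$ already has non-amenable image closure. This is exactly the configuration you isolate as the ``principal obstacle,'' and it is not an obstacle to be overcome but an actual obstruction: the extension $\bar\sigma:T_1\to\mathbf{G}(k)$ you seek \emph{cannot} exist, since by the uniqueness clause of Theorem~\ref{thm:lattice} the restriction $\tilde\psi|_\Gamma$ admits a unique continuous extension to $T$ factoring through a unique projection $\pi_i$, namely $i=j\neq 1$. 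No commensurator or cocycle super-rigidity can manufacture a homomorphism that uniqueness forbids. (It is also a warning sign that your argument never invokes the total disconnectedness of the $T_i$.)

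The paper proceeds along a different route, working directly with the given $\phi:\Gamma\to\GL_d(K)$ rather than with representations of $T$. Assuming $\phi(\Gamma)$ is not solvable-by-locally finite, one first shows $\phi$ is injective: if $N=\ker\phi$ were non-trivial then each $T_i/\overline{\pi_i(N)}$ would be amenable by hypothesis, and \cite{BS}*{Theorem~3.7(iv)} would force $\Gamma/N\simeq\phi(\Gamma)$ to be amenable. Next, total disconnectedness is used to pick a compact open subgroup $U<T_2\times\cdots\times T_n$ and set $\Delta=\Gamma\cap(T_1\times U)$; this $\Delta$ is commensurated in $\Gamma$, and via the injective $\pi_1$ it becomes a lattice in $T_1$ with $\pi_1(\Gamma)$ a dense commensurating overgroup. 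Corollary~\ref{cor:nonlinearitycom} (not Theorem~\ref{thm:nonlinearity}) now applies to $\pi_1(\Delta)<\pi_1(\Gamma)<T_1$ and forces $\phi(\Delta)$ to be solvable-by-locally finite, hence amenable. Injectivity of $\phi$ then makes $\Delta$ itself amenable, so the lattice $\pi_1(\Delta)<T_1$ is amenable, contradicting the non-amenability of $T_1$.
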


The proofs of the above three theorems
will be given in \S\ref{sec:proffnl}.
In their proofs we will use new linearity criteria, namely 
Theorem~\ref{lem:nonlinearity} and Corollary~\ref{cor:nonlinearity},
which are of independent interest.
These  will be proven in \S\ref{section:nonlinearity}.
Note that, unlike Theorem~\ref{thm:lattice}, Theorem~\ref{thm:nonlinearity} and Corollary~\ref{cor:newamenable}
are not tautological in the case $n=1$.

The last three results could be further strengthened if we 
assume that $\Gamma$ has property (T), or that we are in a situation
corresponding to the setting of \cite[Theorem~1.1]{BS},
where one can deduce further that the image of $\Gamma$ is finite 
in any linear representation.
We refer our reader to \cite{Monod} where such a theorem
is proven, along with a very nice converse: under certain natural
assumptions on a finitely generated lattice in a product, the only way it could have a 
linear representation with an infinite image is that it is an arithmetic lattice
to begin with. 
For a similar conclusion (in characteristic $0$) with no finite generation assumption, see \cite{BFS}*{Theorem~4.5}.

\subsection{Acknowledgments and disclaimers}

Theorem~\ref{thm:lattice} appeared in our manuscript \cite{AREA},
which we do not intend to publish as, in retrospect, we find it hard to read.
While writing this paper we made an effort to improve the presentation and to keep things concise and simple.
The paper \cite{AREA} contains further results, regarding higher rank lattices and general cocycle super-rigidity, 
on which we intend to elaborate in two forthcoming papers.

We owe thanks to many friends, numerous to be listed here, for their continuous stimulation and interest in our work.
Special thanks go to T.N. Venkataramana, Pierre-Emmanuel Caprace, Tsachik Gelander and the anonymous referee.
We are also indebted to 
Bruno Duchesne and Jean L\'{e}cureux for their contribution to this project.


\section{Algebraic varieties as Polish spaces} \label{alg perlim}

In this section we fix a field $k$ with an absolute value $|\cdot|:k\to [0,\infty)$
as defined and discussed in \cite{valued}.
We assume that the absolute value is non-trivial, and that $(k,|\cdot|)$ is complete (as a metric space) and separable
(in the sense of having a countable dense subset).
The theory of manifolds over such fields is developed in \cite[Part II, Chapter I]{serre}.

We also fix a $k$-algebraic group ${\bf G}$.
We will discuss the category of $k$-${\bf G}$-varieties.
A $k$-${\bf G}$-variety is a $k$-variety endowed with an algebraic action of ${\bf G}$ which is defined over $k$.
A morphism of such varieties is a $k$-morphism which commutes with the ${\bf G}$-action.
By a $k$-coset variety we mean a variety of the form
${\bf G}/{\bf H}$ for some $k$-algebraic subgroup ${\bf H} < {\bf G}$
(see \cite[Theorem 6.8]{borel}).

Each $k$-${\bf G}$-variety gives rise to a topological space: $V={\bf V}(k)$ endowed with its $(k,|\cdot|)$-topology.
Topological notions, unless otherwise said, will always refer to this topology.
In particular $G={\bf G}(k)$ is a topological group.
It is locally compact if $k$ is a local field.

The norm $|\cdot|$ on $k$ allows one to define the associated \emph{bornology} (see \cite{Bruhat+Tits}*{(3.1.1)}) 
on $k$, on the affine spaces $k^N$, and on affine $k$-varieties such as $\mathbf{G}(k)$. 
We refer to \cite{BDL}*{\S 4} for a detailed discussion of the bornology on $\mathbf{G}(k)$ induced by $(k,|\cdot|)$.
In the special case where $k$ is a local field, being a bounded subset of $\mathbf{G}(k)$ is equivalent to being precompact in the topology on $\mathbf{G}(k)$ induced by $k$.

Recall that a topological space is called Polish if it is separable and completely metrizable.
For a good survey on the subject we recommend \cite{kechris}.
We mention that the class of Polish spaces is closed under countable disjoint unions and countable products.
A $G_\delta$ subset of a Polish spaces is Polish so, in particular, a locally closed subset of a Polish space is Polish.
A Hausdorff space which admits a finite open covering by Polish open sets is itself Polish.
Indeed,
such a space is clearly metrizable (e.g.\ by Smirnov metrization theorem)
so it is Polish by
Sierpinski's theorem \cite[Theorem 8.19]{kechris} which states that the image of an open map from a Polish space to a separable metrizable space is Polish.
Sierpinski's theorem also implies that for a Polish group $K$ and a closed subgroup $L$, the quotient topology on $K/L$ is Polish.
Effros' Lemma \cite[Lemma 2.5]{effros} says that the quotient topology on $K/L$ is the unique $K$-invariant Polish topology on this space.

\begin{proposition} \label{polishing}
The $k$-points of a $k$-variety form a Polish space.
In particular, $G=\mathbf{G}(k)$ is a Polish group.
If ${\bf V}$ is a $k$-${\bf G}$-variety
then the $G$-orbits in $V$ are locally closed.
For $v\in V$ the orbit ${\bf G}v$ is a $k$-subvariety of ${\bf V}$.
The stabilizer ${\bf H}<{\bf G}$
is defined over $k$ and the orbit map ${\bf G}/{\bf H}\to {\bf G}v$ is defined over $k$.
Denoting $H={\bf H}(k)$, the induced map $G/H \to Gv$ is a homeomorphism,
when $G/H$ is endowed with the quotient space topology and $Gv$ is endowed with the subspace topology.
\end{proposition}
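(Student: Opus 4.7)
The plan handles the statements in natural order. For the Polish structure on $V = \mathbf{V}(k)$, any $k$-variety admits a finite covering by $k$-affine open subvarieties; the $k$-points of an affine $k$-variety form a Zariski-closed, hence topologically closed, subset of $k^N$ for some $N$, and $k^N$ is Polish by completeness and separability of $k$. The $k$-points of an open subvariety are open (hence $G_\delta$) in $V$, so combining finitely many Polish opens via the Smirnov metrization theorem and Sierpinski's theorem recalled in the excerpt yields a Polish topology on $V$. Specializing to $\mathbf{V} = \mathbf{G}$, continuity of the algebraic group operations on $k$-points makes $G$ a Polish group.

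For the orbit structure, standard algebraic group theory (\cite[Theorem 6.8]{borel}, cited in the excerpt) gives that for any $v \in V$ the stabilizer $\mathbf{H} = \mathbf{G}_v$ is a closed $k$-subgroup of $\mathbf{G}$ (as the preimage of the $k$-point $v$ under the $k$-morphism $g \mapsto gv$), that the orbit $\mathbf{G}v$ is a locally closed $k$-subvariety of $\mathbf{V}$, and that the orbit map $\mathbf{G}/\mathbf{H} \to \mathbf{G}v$ is a $k$-morphism. To pass to the topological assertions, I would invoke the implicit function theorem for $k$-analytic manifolds from \cite[Part II, Chapter I]{serre}: the smooth surjection $\mathbf{G} \to \mathbf{G}/\mathbf{H}$ induces an open continuous map $G \to (\mathbf{G}/\mathbf{H})(k)$ on $k$-points, with image precisely the coset space $G/H$. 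Pushed forward through the orbit identification $\mathbf{G}/\mathbf{H} \to \mathbf{G}v \subset \mathbf{V}$, this realizes $Gv$ as an open subset of $(\mathbf{G}v)(k)$. Since $\mathbf{G}v$ is Zariski locally closed in $\mathbf{V}$, $(\mathbf{G}v)(k)$ is locally closed in $V$, and therefore so is $Gv$; in particular $Gv$ is Polish in the subspace topology.

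The continuous equivariant bijection $G/H \to Gv$ obtained by factoring $G \to Gv$ through the quotient is open (as both $G \to Gv$ and $G \to G/H$ are), hence a homeomorphism; alternatively, the subspace topology on $Gv$ pulled back to $G/H$ is a $G$-invariant Polish topology forced by Effros's lemma to coincide with the quotient topology. The main obstacle is this single analytic step -- converting algebraic smoothness of $\mathbf{G} \to \mathbf{G}/\mathbf{H}$ into openness on $k$-points via the implicit function theorem, which is where the completeness hypothesis on $k$ is essential. Some extra care is needed in positive characteristic, where the orbit map $\mathbf{G}/\mathbf{H} \to \mathbf{G}v$ may fail to be an isomorphism of $k$-varieties (being only bijective), but openness of $G \to Gv$ is secured by working directly with the smooth quotient $\mathbf{G} \to \mathbf{G}/\mathbf{H}$ rather than with the composite.
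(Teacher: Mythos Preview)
Your treatment of the Polish structure on $V$ matches the paper's, and in characteristic zero your implicit-function-theorem route to local closedness is essentially the content of the Bern\v{s}te\u{\i}n--Zelevinski\u{\i} reference \cite{b-z} that the paper invokes as a black box. The paper simply cites \cite{b-z} for local closedness of $G$-orbits, cites \cite[Propositions~1.7,~6.7 and Theorem~6.8]{borel} for the algebraic statements about $\mathbf{G}v$, $\mathbf{H}$ and the orbit map, and then concludes with Effros' lemma exactly as in your stated alternative.

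There is, however, a real gap in your positive-characteristic paragraph. The assertion that $Gv$ is open in $(\mathbf{G}v)(k)$ can fail. Take $k=\mathbb{F}_p((t))$ and let $\mathbf{G}=\mathbb{G}_a$ act on $\mathbf{V}=\mathbb{A}^1$ by $g\cdot v=v+g^p$. The reduced stabilizer of $0$ is trivial, so $\mathbf{G}/\mathbf{H}=\mathbb{G}_a$, while $\mathbf{G}\cdot 0=\mathbb{A}^1$ and the orbit morphism $\mathbf{G}/\mathbf{H}\to\mathbf{G}\cdot 0$ is Frobenius. On $k$-points one gets $G\cdot 0=k^p\subset (\mathbf{G}\cdot 0)(k)=k$, and $k^p$ is \emph{not} open in $k$ (no ball about $0$ lies in $k^p$, since $t^N\notin k^p$ whenever $p\nmid N$). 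Your proposed fix, ``work with the smooth quotient $\mathbf{G}\to\mathbf{G}/\mathbf{H}$'', does nothing here: that map is the identity, and the inseparability sits entirely in the factor $\mathbf{G}/\mathbf{H}\to\mathbf{G}v$ that you must traverse to land in $V$. So neither the openness of $G\to Gv$ nor the local closedness of $Gv$ in $V$ is actually established by your argument.

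In this example the orbit $k^p$ \emph{is} locally closed (it is closed) and $G/H\to Gv$ \emph{is} a homeomorphism (the $p$-th power map $k\to k^p$ has continuous inverse because $(x-y)^p=x^p-y^p$), so the proposition survives---but proving local closedness in general requires the more careful analytic argument carried out in \cite{b-z}, valid for any complete non-trivially valued field, which the paper cites rather than reproduces. Once that is in hand, the Effros step you mention is precisely how the paper finishes.
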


\begin{proof}
Since $k$ is complete and separable it is Polish and so is the affine space $\mathbb{A}^n(k) \simeq k^n$.
The set of $k$-points of a $k$-affine variety is closed in the $k$-points of the affine space, hence it is a Polish subspace.
It follows that the set of $k$-points of any $k$-variety is a Polish space,
as this space is a Hausdorff space which admits a finite open covering by Polish open sets, namely the $k$-points of its $k$-affine charts.

The fact that the $G$-orbits in $V$ are locally closed is
proven in the appendix of \cite{b-z}.
Note that in \cite{b-z} the statement is claimed only for non-archimedean local fields, but the proof is actually correct for any field with
a complete non-trivial absolute value, which is the setting of \cite[Part II, Chapter III]{serre}
on which \cite{b-z} relies.

For $v\in V$ the orbit ${\bf G}v$ is a $k$-subvariety of ${\bf V}$ by \cite[Proposition 6.7]{borel}.
The stabilizer ${\bf H}<{\bf G}$
is defined over $k$ by \cite[Proposition 1.7]{borel}
and we get an orbit map which is defined over $k$ by \cite[Theorem 6.8]{borel}.
Clearly $H$ is the stabilizer of $v$ in $G$ and the orbit map restricts to a continuous map from $G/H$ onto $Gv$.
Since $Gv$ is a Polish subset of $V$, as it is locally closed, we conclude by Effros' Lemma that the latter map is a homeomorphism.
\end{proof}


\section{Ergodic Theoretical preliminaries} \label{sec:AME}

In this section we review some notions and facts from Ergodic Theory.
Recall that a standard Borel space is a measurable space which is isomorphic as such to a Polish topological space endowed with its Borel $\sigma$-algebra
and a Lebesgue space is a standard Borel space endowed with a $\sigma$-finite measure class.

Given a Lebesgue space $Y$ and a Borel space $V$ we denote by $\LL(Y,V)$ the space of all classes of measurable maps, identified up to a.e equality, from $Y$ to $V$.
The elements of $\LL(Y,V)$ are called {\em Lebesgue maps} from $Y$ to $V$.
Given a standard Borel space $V$, a Lebesgue space $Y$ and an essentially surjective Borel map $\pi: V\to Y$
(that is, $\pi$ is measurable for a Borel model of $Y$ and its image has full measure),
we denote by $\LL(\pi)$ the set of all equivalence classes of measurable sections of $\pi$, defined up to a.e equality.

Given two Lebesgue spaces $X,Y$ we use the term {\em Lebesgue morphism} to denote a Lebesgue map from $X$ to the underlying Borel space of $Y$ which is measure class preserving, that is the preimage of a null set in $Y$ is null in $X$.
There is a correspondence between Lebesgue morphisms $X\to Y$ and von Neumann algebra morphisms $L^\infty(Y)\to L^\infty(X)$.

Every lcsc group (that is, locally compact second countable topological group) gives rise to a Lebesgue space when endowed with its Haar measure class.
Accordingly, we will regard below many times lcsc groups as Lebesgue spaces without further mention.
Given a lcsc group $S$, a Lebesgue space $Y$ is called  an $S$-Lebesgue space
when it is endowed with a Lebesgue action of $S$. That is a Lebesgue morphism
$S\times Y\to Y$ which gives rise to an action of $S$ on $L^\infty(Y)$ by automorphisms of von Neumann algebras. 
A Borel space $V$ is called an $S$-Borel space when it endowed with a Borel action of $S$.
That is a Borel morphism $S\times V\to V$ which gives rise to an action of $S$ on $V$ by automorphisms of a Borel space.

\begin{defn}[Amenability, see {\cite[Definition~4.3.1]{zimmer-book}}]
Given a lcsc group $S$, an $S$-Lebesgue space $Y$ is called {\em amenable} if for every $S$-Borel space $V$
and an essentially surjective $S$-equivariant Borel map $\pi: V\to Y$ with a measurably defined compact convex structure on the fibers,
such that the $S$-action restricted to the fibers is by continuous affine maps,
one has $L^0(\pi)^S\neq \emptyset$. That is, every $S$-Borel bundle of convex compact sets over $S$ admits an invariant measurable section.
\end{defn}


\begin{defn}[Metric Ergodicity, see {\cite[\S2]{BF-lyap}}]
Given a lcsc group $S$, an $S$-Lebesgue space $Y$ is called {\em metrically ergodic} if for every separable metric space $V$ on which $S$ acts continuously by isometries, every $S$-equivariant Lebesgue map $Y\to V$ is a.e constant.
\end{defn}

\begin{theorem}[Kaimanovich--Zimmer] \label{boundary}
For every lcsc group $S$ there exists an $S$-Lebesgue space $Y$ which is both amenable and metrically ergodic.
\end{theorem}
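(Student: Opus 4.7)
The plan is to take $Y$ to be the Poisson boundary of a suitable random walk on $S$, following the strategy of Furstenberg, Zimmer and Kaimanovich. First I would pick an admissible probability measure $\mu$ on $S$ --- absolutely continuous with respect to Haar measure, with topologically generating support; such a $\mu$ exists on every lcsc group. Form the path space $(\Omega,\mathbb{P}) = (S^{\mathbb{N}},\mu^{\otimes \mathbb{N}})$ interpreted as step-increments of the right random walk started at $e$, and realize the Poisson boundary $(B,\nu)$ as the Mackey point-realization of the tail (equivalently, shift-invariant) $\sigma$-algebra. The natural $S$-action on paths descends to a measure-class-preserving action on $(B,\nu)$, and $\nu$ is $\mu$-stationary, meaning $\mu \ast \nu = \nu$. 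I would then set $Y = (B,\nu)$.

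For amenability I would verify the fiberwise fixed-point property directly. Given a Borel bundle of compact convex sets $\pi\colon V \to B$ carrying a continuous affine fiberwise $S$-action, pull it back along the natural measurable map $\Omega \to B$, pick any measurable section over $\Omega$, and iteratively apply the Markov operator attached to $\mu$ inside each fiber. Fiberwise compactness together with the martingale convergence theorem produces a pointwise limit which is tail-measurable and $S$-invariant; descending to $B$ yields the required invariant measurable section of $V \to B$. This is essentially Zimmer's original argument that Poisson boundaries are amenable Lebesgue spaces.

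For metric ergodicity, suppose $f\colon B \to V$ is an $S$-equivariant Lebesgue map into a separable metric space on which $S$ acts continuously by isometries. Via a closest-point or barycenter argument one reduces to showing that, for every $v_0 \in V$, the bounded $\mu$-harmonic function $x \mapsto \int d\bigl(f(xb),v_0\bigr)\, d\nu(b)$ on $S$ is constant. This follows from Kaimanovich's strengthening of Poisson-boundary theory, the so-called \emph{double ergodicity with isometric coefficients} of $(B,\nu)$, which combines the stationarity identity $\mu \ast \nu = \nu$, a martingale convergence argument on $B \times B$ with the diagonal $S$-action, and the fact that $(B,\nu)$ represents the full tail $\sigma$-algebra of the walk to rule out any nontrivial equivariant target with an isometric $S$-action.

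The main obstacle is the metric ergodicity step. Amenability of the Poisson boundary is by now classical and follows rather directly from averaging arguments and martingale convergence in compact convex fibers. Metric ergodicity, by contrast, requires Kaimanovich's genuinely finer analysis of how tail-measurable equivariant maps behave when the target carries only an isometric --- as opposed to linear or affine --- $S$-action, and it is precisely this upgrade from scalar ergodicity to ergodicity with values in arbitrary separable isometric $S$-spaces that is needed for the super-rigidity arguments of later sections.
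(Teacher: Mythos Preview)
Your overall plan matches the paper, which does not give an independent argument but simply cites Kaimanovich \cite{kaimanovich} (together with \cite[Remark~4.3]{GW}) for the fact that the Poisson boundary of an admissible measure is amenable and metrically ergodic. Your amenability sketch via Zimmer's averaging/martingale argument is the standard one and is fine.

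The metric ergodicity sketch, however, contains a genuine slip. You write that one ``reduces to showing that the bounded $\mu$-harmonic function $x \mapsto \int d\bigl(f(xb),v_0\bigr)\,d\nu(b)$ is constant''. That function is indeed $\mu$-harmonic (by stationarity of $\nu$), but bounded $\mu$-harmonic functions on $S$ are \emph{not} constant in general---the Poisson boundary is precisely the object that represents them, so whenever $(B,\nu)$ is non-trivial there are many non-constant ones. Thus this step, as stated, cannot be carried out, and the ``closest-point or barycenter'' reduction you allude to does not lead there. You then invoke Kaimanovich's double ergodicity, which is indeed the right input, but the logical route to it is not via constancy of harmonic functions.

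A cleaner way to deduce (single) metric ergodicity, once you have Kaimanovich's scalar double ergodicity of $(B\times B,\nu\times\nu)$ for a symmetric admissible $\mu$: given an $S$-equivariant Lebesgue map $f:B\to V$ into a separable isometric $S$-space, the function $(b,b')\mapsto d\bigl(f(b),f(b')\bigr)$ is $S$-invariant on $B\times B$, hence essentially constant, say equal to $c$. If $c>0$, cover the separable essential image of $f$ by balls of radius $c/3$; one preimage has positive $\nu$-measure, and any two points in it are at distance $<2c/3<c$, a contradiction. Hence $c=0$ and $f$ is essentially constant. This replaces your harmonic-function step and makes the dependence on \cite{kaimanovich} transparent.
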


\begin{remark}
In fact \cite{kaimanovich} shows that there exists a space $Y$ which is amenable and {\em doubly metrically ergodic}.
With a slight restriction, this was first proven in \cite{BM}.
A slightly stronger theorem is proven in \cite[Theorem~2.7]{BF-lyap}, but we will not discuss these extensions here.
\end{remark}

\begin{proof}[On the proof of Theorem~\ref{boundary}]
In \cite{kaimanovich} the weaker statement that every group possesses a strong boundary in the sense of \cite{BM} is proven,
but the same proof actually proves Theorem~\ref{boundary},
as explained in \cite[Remark~4.3]{GW}.
\end{proof}

\begin{lemma} \label{AMElattice}
Given a lcsc group $S$ and a lattice $\Gamma<S$, if $Y$ is an amenable and metrically ergodic $S$-Lebesgue space 
then it is also amenable and metrically ergodic as a $\Gamma$-Lebesgue space.
\end{lemma}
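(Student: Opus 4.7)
The plan is to prove both properties by induction from $\Gamma$ to $S$, leveraging the finite $S$-invariant probability measure $\mu$ on $S/\Gamma$ that exists because $\Gamma$ is a lattice.

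For metric ergodicity, given a separable metric $\Gamma$-space $V$ with continuous isometric action and a $\Gamma$-equivariant Lebesgue map $\phi:Y\to V$, I would introduce the induced $S$-space $\tilde V$ consisting of equivalence classes of measurable maps $f:S\to V$ satisfying the $\Gamma$-equivariance relation $f(s\gamma)=\gamma^{-1}f(s)$. Fixing a Borel section $\sigma:S/\Gamma\to S$ identifies $\tilde V$ with $L^0(S/\Gamma,V)$, hence equips it with the separable convergence-in-measure metric
$$d(f_1,f_2)=\int_{S/\Gamma}\min\bigl(1,d_V(f_1(\sigma(x)),f_2(\sigma(x)))\bigr)\,d\mu(x).$$
A direct check, using only the $S$-invariance of $\mu$ and the fact that $\Gamma$ acts isometrically on $V$, shows that left translation $(s\cdot f)(s'):=f(s^{-1}s')$ is a continuous isometric $S$-action on $\tilde V$. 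The assignment $\tilde\phi(y)(s):=\phi(s^{-1}y)$ then defines an $S$-equivariant Lebesgue map $Y\to\tilde V$, so by $S$-metric ergodicity of $Y$ it is essentially constant, and a Fubini argument forces $\phi$ itself to be essentially constant.

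For amenability, I would carry out the parallel induction on bundles: given a $\Gamma$-Borel bundle $\pi:V\to Y$ of convex compact sets carrying a continuous affine $\Gamma$-action, construct the $S$-bundle $\tilde V\to Y$ whose fibre over $y$ is the set of $\Gamma$-equivariant measurable maps $F:S\to V$ with $\pi(F(s))=s^{-1}y$. Under the same identification with sections over $(S/\Gamma,\mu)$, each fibre $\tilde V_y$ is convex and compact in the convergence-in-measure topology, and the left-translation action $(s\cdot F)(s'):=F(s^{-1}s')$ is continuous and affine on fibres, turning $\tilde V\to Y$ into the sort of bundle to which $S$-amenability applies. An invariant measurable section $y\mapsto F_y$ then yields the desired $\Gamma$-invariant measurable section of $V\to Y$ by the rule $y\mapsto F_y(e)$; the equivariance $F_{\gamma y}(e)=F_y(\gamma^{-1})=\gamma F_y(e)$ follows from the $S$-equivariance of $y\mapsto F_y$ combined with the defining $\Gamma$-equivariance of each $F_y$.

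The main subtlety will be verifying, in the amenability step, that $\tilde V\to Y$ really is an $S$-Borel bundle with measurably-defined compact convex fibres in the precise sense of the amenability definition; this amounts to a careful treatment of the cocycle associated with the chosen Borel fundamental domain for $\Gamma$ in $S$, and to checking that the convergence-in-measure topology on the fibre is indeed compact when the $V_y$ are compact and $\mu$ is a probability measure. Both points are routine once the setup is in place, and the crucial use of the lattice hypothesis is concentrated in the finiteness of $\mu$, which is what allows the convergence-in-measure topology to be both metric and compact.
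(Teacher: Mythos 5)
Your treatment of metric ergodicity is correct and is essentially the paper's own argument: the paper likewise induces $\phi$ to the space $\LL(S,V)^\Gamma$ of $\Gamma$-equivariant Lebesgue maps $S\to V$, metrizes it by integrating the truncated distance over $\Gamma\backslash S$ against the finite invariant measure, notes that $S$ acts on it continuously and isometrically by translation in the domain, and applies $S$-metric ergodicity to $y\mapsto[s\mapsto\phi(sy)]$. Your version differs only in left/right conventions and in truncating $d_V$ inside the integral rather than replacing $d_V$ by $\min(d_V,1)$ at the outset.

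For amenability the paper simply cites Zimmer \cite[4.3.5]{zimmer-book}, whereas you attempt the induction of bundles directly. The strategy is the right one (it is how the cited result is proved), but the step you dismiss as routine is in fact false as stated: the fibre $\tilde V_y$, identified with a set of measurable sections over $(S/\Gamma,\mu)$, is \emph{not} compact in the convergence-in-measure topology. Already for the constant bundle with fibre $[0,1]$, the Rademacher-type functions in $\LL([0,1],[0,1])$ are pairwise at distance $\tfrac12$ in the measure metric, so this space is not totally bounded; finiteness of $\mu$ makes the topology metrizable and separable but never compact outside degenerate cases. The repair is to use the correct topology: in the definition of amenability the fibres $V_y$ sit inside the unit ball of the dual $E^*$ of a separable Banach space, and the induced fibre must be viewed inside $L^\infty_{w^*}(S/\Gamma,E^*)\cong L^1(S/\Gamma,E)^*$ with its weak-$*$ topology, where Banach--Alaoglu yields compactness and the affine $S$-action remains continuous. (A second, smaller wrinkle is that ``evaluation at $e$'' of an equivalence class of measurable maps is not literally defined; one recovers the $\Gamma$-invariant section by a Fubini argument or by averaging over a compact neighbourhood of the identity.) With these corrections your argument goes through and reproduces the standard proof of the fact the paper quotes.
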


\begin{proof}
The fact that $Y$ is $\Gamma$-amenable follows from \cite[4.3.5]{zimmer-book}.
To show that $Y$ is $\Gamma$-metrically ergodic, we fix a measurable $\Gamma$-equivariant map $\phi:Y\to V$, 
where $(V,d)$ is a metric space on which $\Gamma$ acts isometrically, and argue to show that it is essentially constant.
Replacing $d(x,y)$ by $\min(d(x,y),1)$ we may assume $d$ to be bounded.
We consider the space $\LL(S,V)^\Gamma$ consisting of Lebesgue maps which are $\Gamma$-equivariant with respect 
to the left action of $\Gamma$ on $S$.
Given $\alpha,\beta\in \LL(S,V)^\Gamma$, note that $d(\alpha(x),\beta(x))$ is a $\Gamma$-invariant function on $S$;
that descends to a well define function on $\Gamma\backslash S$. 
Using this observation, we define on $\LL(S,V)^\Gamma$ a function $D$ by setting for $\alpha,\beta\in \LL(S,V)^\Gamma$,
\[ 
	D(\alpha,\beta)=\int_{\Gamma\backslash S} d(\alpha(x),\beta(x))\dd x. 
\]
Then $D$ is a metric on $\LL(S,V)^\Gamma$, and $S$ acts continuously and isometrically on this space 
via its right regular action on the domain. 
The map
\[ 
	\Phi:Y\to\LL(S,V)^\Gamma, \quad y \mapsto [s \mapsto \phi(sy)], 
\]
defined using Fubini's theorem (see \cite[Chapter VII, Lemma 1.3]{margulis-book}), is $S$-equivariant.
By the $S$-metric ergodicity of $Y$ we conclude that $\Phi$ is essentially constant,
and thus $\phi$ is essentially constant too.
\end{proof}

\begin{lemma} \label{AMEproduct}
Let $S_1,\dots,S_n$ be lcsc groups, and let $Y_i$ be an amenable and metrically ergodic 
$S_i$-Lebesgue space for each $i=1,\dots,n$. 
Then $Y_1\times \cdots \times Y_n$ is an amenable and metrically ergodic $S_1\times\cdots\times S_n$-Lebesgue space.
\end{lemma}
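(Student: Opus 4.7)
The plan is to reduce to the case $n=2$ by an obvious induction and then to treat amenability and metric ergodicity separately, in both cases via a slicing argument in the two factors. Write $Y=Y_1\times Y_2$ and $S=S_1\times S_2$.

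For amenability, I would handle a Borel bundle $\pi:V\to Y$ of compact convex sets with fibrewise continuous affine $S$-action by first restricting to each slice $\{y_1\}\times Y_2$, obtaining an $S_2$-bundle $\pi_{y_1}:\pi^{-1}(\{y_1\}\times Y_2)\to Y_2$. The $S_2$-amenability of $Y_2$ yields a non-empty compact convex set $C(y_1)\subset\LL(\pi_{y_1})^{S_2}$ of $S_2$-invariant measurable sections (compactness coming from a weak topology in which the fibrewise compact convex structure is preserved). Packaging the $C(y_1)$ into a Borel bundle $C\to Y_1$ that carries a fibrewise continuous affine $S_1$-action and invoking the $S_1$-amenability of $Y_1$ then produces an $S_1$-invariant section of $C$, which unpacks to an $S$-invariant measurable section of the original $\pi$. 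This is the classical product property of amenable actions in the sense of Zimmer and could alternatively simply be cited.

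For metric ergodicity, let $\phi:Y\to V$ be $S$-equivariant into a separable metric space $(V,d)$ on which $S$ acts continuously by isometries; truncating, assume $d\le 1$. Fix an equivalent probability measure on $Y_2$ and, mirroring the construction in Lemma~\ref{AMElattice}, equip $\LL(Y_2,V)$ with the averaged metric
\[
  D(\alpha,\beta)=\int_{Y_2} d(\alpha(y_2),\beta(y_2))\dd y_2.
\]
Because $S_1$ does not act on $Y_2$ and acts isometrically on $V$, the formula $(s_1\cdot\alpha)(y_2)=s_1\alpha(y_2)$ defines a continuous isometric $S_1$-action on $(\LL(Y_2,V),D)$, and Fubini supplies the $S_1$-equivariant Lebesgue map $\Phi:Y_1\to\LL(Y_2,V)$, $y_1\mapsto [y_2\mapsto\phi(y_1,y_2)]$. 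The $S_1$-metric ergodicity of $Y_1$ forces $\Phi$ to be essentially constant, so that $\phi(y_1,y_2)=\psi(y_2)$ for some $\psi\in\LL(Y_2,V)$; the $S_2$-equivariance of $\phi$ passes to $\psi$, and a second appeal to metric ergodicity, now of $Y_2$, forces $\psi$, and hence $\phi$, to be essentially constant.

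The main obstacle is expected to lie in the amenability half: endowing $y_1\mapsto C(y_1)$ with a Borel bundle structure of compact convex fibres carrying a fibrewise continuous affine $S_1$-action. This bookkeeping is entirely standard but notationally cumbersome, which is why citing the classical product theorem is the cleanest route. The metric ergodicity half is essentially the Fubini-on-$\LL$ manoeuvre already executed in Lemma~\ref{AMElattice}, and should be routine to carry out in detail.
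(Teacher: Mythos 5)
Your proposal is correct and follows essentially the same route as the paper: reduce to $n=2$, prove amenability by slicing the bundle over $Y_1$ and applying $S_2$- then $S_1$-amenability, and prove metric ergodicity by a Fubini/slicing argument with two successive applications of metric ergodicity. The only (cosmetic) difference is that in the metric ergodicity half the paper first applies $S_2$-metric ergodicity to the a.e.\ slice maps $\{y_1\}\times Y_2\to V$ and then $S_1$-metric ergodicity to the resulting map $Y_1\to V$, whereas you route through the auxiliary isometric $S_1$-space $\LL(Y_2,V)$ first, exactly as in Lemma~\ref{AMElattice}.
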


\begin{proof}
	By induction, it suffices to consider the case $n=2$.
The proof that $Y=Y_1\times Y_2$ is $S$-amenable for $S=S_1\times S_2$ is well known, so we merely sketch it.
Let $\pi:C\to Y$ be an
$S$-Borel bundle of convex compact sets over $Y$.
For every $y_1\in Y_1$ we let $C_{y_1}=\pi^{-1}(\{y_1\}\times Y_2)$ and
$\LL(\pi|_{C_{y_1}})$ be the corresponding space of sections.
We view these spaces as a convex compact bundle over $Y_1$ (using the obvious weak*-topology) 
and denote by $\LL(Y_1,\LL(\pi|_{C_{y_1}}))$ its space of sections.
Its obvious identification with $\LL(\pi)$ gives an identification
$\LL(\pi)^S\simeq \LL(Y_1,\LL(\pi|_{C_{y_1}})^{S_2})^{S_1}$.
The right hand side is non-empty by our amenability assumptions, hence so is the left hand side.

Let $V$ be an $S$-metric space and $\phi:Y\to V$  an $S$-equivariant Lebesgue map.
For a.e.\ $y_1\in Y_1$, $\phi_{\{y_1\}\times Y_2}$ is defined by Fubini's theorem, and it is essentially constant, 
as $Y_2$ is $S_2$-metrically ergodic.
Thus $\phi$ is reduced to a map $\phi':Y_1\to V$ which is again essentially constant, as $Y_1$ is $S_1$-metrically ergodic.
Thus $\phi$ is essentially constant.
\end{proof}

\begin{lemma} \label{me-e}
Let $S$ be a lcsc group and $X,Y$ be $S$-Lebesgue spaces.
Assume the action on $X$ is ergodic and probability measure preserving and the action on $Y$ is metrically ergodic.
Then the diagonal $S$-action on $X\times Y$ is ergodic.
\end{lemma}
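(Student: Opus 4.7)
The plan is to reduce ergodicity of the diagonal $S$-action on $X \times Y$ to metric ergodicity of $Y$, using the probability-preserving ergodic action on $X$ as the source of an isometric target. Let $A \subseteq X \times Y$ be an $S$-invariant measurable set; I want to show $A$ is null or co-null. For a.e.\ $y \in Y$, Fubini's theorem gives a measurable slice $A_y = \{x \in X : (x,y) \in A\}$, and since $X$ carries a probability measure, $\chi_{A_y}$ lies in the closed unit ball of $L^2(X)$. Define
\[
	\Phi : Y \longrightarrow L^2(X), \qquad \Phi(y) = \chi_{A_y}.
\]
The $S$-invariance of $A$ translates into $\Phi(sy)(x) = \chi_A(x,sy) = \chi_A(s^{-1}x,y) = \Phi(y)(s^{-1}x)$, so $\Phi$ is $S$-equivariant with respect to the Koopman (regular) representation on $L^2(X)$.

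Since the $S$-action on $X$ is probability measure preserving and $X$ is a standard Lebesgue space, $L^2(X)$ is a separable Hilbert space and the Koopman representation is a strongly continuous unitary representation. In particular, $L^2(X)$ (with its norm) is a separable metric space on which $S$ acts continuously by isometries. Metric ergodicity of $Y$ then forces $\Phi$ to be essentially constant: there exists $g \in L^2(X)$ such that $\chi_{A_y} = g$ for a.e.\ $y \in Y$. As $g$ is then a.e.\ $\{0,1\}$-valued, $g = \chi_B$ for some measurable $B \subseteq X$, and $A$ coincides up to a null set with $B \times Y$.

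Finally, the $S$-invariance of $A$ together with Fubini's theorem implies that $B$ is $S$-invariant up to a null set in $X$; by ergodicity of the $S$-action on $X$, $B$ is null or co-null, and hence so is $A$. The only mild technicality — the one I would flag as the ``hard'' step to record carefully — is the passage from a Borel probability measure preserving action of the lcsc group $S$ on $X$ to strong continuity of the associated unitary representation on $L^2(X)$, which is needed to invoke metric ergodicity; this is a standard fact about lcsc group actions on standard Lebesgue spaces and requires no additional hypotheses beyond those given.
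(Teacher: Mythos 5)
Your proof is correct and follows essentially the same route as the paper's: both take the $S$-invariant object on $X\times Y$, form the slice map $Y\to L^2(X)$ via Fubini, observe it is equivariant for the (strongly continuous, isometric) Koopman representation, invoke metric ergodicity of $Y$ to make it essentially constant, and finish with ergodicity of $X$. The only cosmetic difference is that the paper works with a general $f\in L^\infty(X\times Y)^S$ rather than a characteristic function.
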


\begin{proof}
For $f\in L^{\infty}(X\times Y)^S$, using Fubini's theorem, we define $F:Y\to L^\infty(X) \subset L^2(X)$
by $F(y)(x)=f(x,y)$.
$F$ is easily checked to be $S$-equivariant.
The image of $F$ must be $S$-invariant, by the metric ergodicity of $Y$, as the $S$-action on $L^2(X)$ is continuous.
By ergodicity of $X$ this image is a constant function, thus $f$ is constant.
\end{proof}

\begin{lemma} \label{T2B1ergodic}
	Let $S_1,\dots,S_n$ be lcsc groups, $\Gamma<S=S_1\times\cdots\times S_n$ be a lattice
	with dense projections, and let $Y_i$ be metrically ergodic 
	$S_i$-Lebesgue spaces for $i=1,\dots,n$.
	Then the $\Gamma$-action on $S_i\times \prod_{j\ne i}Y_j$ is ergodic.
\end{lemma}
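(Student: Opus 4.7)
My plan is to reduce the desired $\Gamma$-ergodicity on $X := S_i \times \prod_{j \ne i} Y_j$ to the $S$-ergodicity of $(S/\Gamma) \times X$ (with the diagonal $S$-action, left translation on the first factor) via the standard induction isomorphism $L^\infty((S/\Gamma) \times X)^S \cong L^\infty(X)^\Gamma$, valid since $\Gamma<S$ is a lattice, and then verify the latter by combining the density of $\pi_i(\Gamma)$ in $S_i$ with Lemma~\ref{me-e}.

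First I would observe that the $\prod_{j \ne i} S_j$-action on $S/\Gamma$, obtained from the inclusion $\prod_{j \ne i} S_j \hookrightarrow S$ composed with left translation, is ergodic. This is because the double-coset space $\prod_{j \ne i} S_j \bs S/\Gamma$ is naturally identified with $S_i/\pi_i(\Gamma)$, so $L^\infty$-invariants correspond to bounded measurable right-$\pi_i(\Gamma)$-invariant functions on $S_i$. The density of $\pi_i(\Gamma)$ in $S_i$ combined with a standard mollification argument (convolving with a compactly supported approximate identity) then forces any such function to be essentially constant.

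Second, since $S/\Gamma$ is now known to be a PMP ergodic $\prod_{j \ne i} S_j$-space, and $\prod_{j \ne i} Y_j$ is $\prod_{j \ne i} S_j$-metrically ergodic by Lemma~\ref{AMEproduct}, Lemma~\ref{me-e} yields the ergodicity of the diagonal $\prod_{j \ne i} S_j$-action on $(S/\Gamma) \times \prod_{j \ne i} Y_j$.

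Finally, given an $S$-invariant $g \in L^\infty((S/\Gamma) \times S_i \times \prod_{j \ne i} Y_j)$, the subgroup $\prod_{j \ne i} S_j$ acts trivially on the $S_i$-coordinate, so fixing that coordinate and invoking the ergodicity just obtained forces $g$ to have the form $g(s\Gamma, r, y) = \alpha(r)$ for some $\alpha \in L^\infty(S_i)$. The remaining $S_i$-invariance of $g$ (acting by left translation on both the $S/\Gamma$- and $S_i$-factors) then gives $\alpha(t r) = \alpha(r)$ for all $t \in S_i$, so $\alpha$ is essentially constant. The induction isomorphism then delivers the desired $\Gamma$-ergodicity on $X$. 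I do not anticipate a serious obstacle here: each step simply combines a previously established lemma with a short direct argument, with the density of $\pi_i(\Gamma)$ in $S_i$ entering only through step one.
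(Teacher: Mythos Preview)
Your proposal is correct and follows essentially the same route as the paper: both arguments obtain $S_i'$-ergodicity of $S/\Gamma$ from the density of $\pi_i(\Gamma)$, upgrade it via Lemma~\ref{AMEproduct} and Lemma~\ref{me-e} to $S_i'$-ergodicity of $(S/\Gamma)\times\prod_{j\ne i}Y_j$, and then unwind back to the desired $\Gamma$-ergodicity. The only cosmetic difference is in this last step: the paper lifts to $S\times Y_i'$, applies the explicit twist $(s,y)\mapsto(s,sy)$, and quotients by $S_i'$, whereas you package the same passage through the induction identification $L^\infty(X)^\Gamma\cong L^\infty((S/\Gamma)\times X)^S$ (valid precisely because the $\Gamma$-action on $X=S_i\times\prod_{j\ne i}Y_j$ is the restriction of an $S$-action) and then split the $S$-invariance into its $S_i'$- and $S_i$-parts.
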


\begin{proof}
We write $S=S_i\times S_i'$ where $S_i'=\prod_{j\ne i}S_j$.
Since $\pi_i(\Gamma)$ is dense in $S_i$, $\Gamma$ acts ergodically on $S_i=S/S_i'$.
Thus $S_i'$ acts ergodically on $X=S/\Gamma$.
Note that the latter action preserves a probability measure.
The diagonal $S_i'$-action on $Y_i'=\prod_{j\ne i}Y_j$ is metrically ergodic 
(Lemma~\ref{AMEproduct}), and using Lemma~\ref{me-e} we conclude that $X\times Y_i'$ is $S_i'$-ergodic.
It follows that $S\times Y_i'$ is $\Gamma\times S_i'$-ergodic, 
where $\Gamma$ acts by its left action on $S$ and $S_i'$ acts diagonally, 
via it right action on $S$ and its given action on $Y_i'$.
Note that the Lebesgue isomorphism $S\times Y_i' \to S\times Y_i'$, 
$(s,y) \mapsto (s,sy)$, 
intertwines the above described action $\Gamma\times S_i'\acts S\times Y_i'$:
\[
	\gamma:(s,y)\mapsto (\gamma s,\gamma y),
	\qquad
	s':(s,y)\mapsto (ss'^{-1},y)
\] 
Taking the $S_i'$-orbit space, we conclude that $\Gamma$ acts ergodically on $S_i\times Y_i'$.
\end{proof}

We also make use of the following special situation.

\begin{lemma}\label{L:lifting}
	Let $S$ be a lcsc group, $X$ a Lebesgue $S$-space,
	$\Gamma$ a countable group, $\Gamma\to S$ a homomorphism,
	$Z$ a Lebesgue $\Gamma$-space.
	Let $V$ be a Borel $\Gamma$-space and $\phi:X\times Z\to V$ a measurable $\Gamma$-map.
	
	Then for a.e. $x\in X$ the map $\hat\phi_x:S\times Z\to V$ given by
	$\hat\phi_x(s,z)=\phi(sx, z)$ is a measurable $\Gamma$-map.
\end{lemma}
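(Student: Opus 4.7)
The plan is to lift $\phi$ to a measurable map on $S\times X\times Z$ and then disintegrate along the $X$-coordinate via Fubini, using the countability of $\Gamma$ to combine equivariance conditions over a conull set of $x$'s.

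First, I would select a Borel representative of the class $\phi$ and define
\[
\Psi: S\times X\times Z \to V,\qquad \Psi(s,x,z) = \phi(sx,z),
\]
which is measurable as it is the composition of $\phi$ with the action map $S\times X\to X$ (jointly measurable by virtue of $X$ being a Lebesgue $S$-space). Fubini's theorem then produces a conull set $X'\subset X$ such that for every $x\in X'$ the slice $\hat\phi_x(s,z):=\Psi(s,x,z)$ is a measurable map $S\times Z\to V$, which settles the measurability half of the claim.

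For the equivariance, I would fix $\gamma\in\Gamma$ and use the hypothesis that $\phi(\gamma x',\gamma z)=\gamma\phi(x',z)$ on some conull set $U_\gamma\subset X\times Z$. The key point is that the preimage
\[
W_\gamma=\{(s,x,z)\in S\times X\times Z : (sx,z)\in U_\gamma\}
\]
is conull in $S\times X\times Z$: for each fixed $s$, the map $(x,z)\mapsto(sx,z)$ preserves the measure class on $X\times Z$ (because the $S$-action on $X$ is a Lebesgue action), so the slice of $W_\gamma$ at $s$ is conull in $X\times Z$, and the claim follows by Fubini. Applying Fubini once more in the opposite direction, for a.e.\ $x\in X$ the slice $(W_\gamma)_x$ is conull in $S\times Z$; denote this conull set of good $x$'s by $X_\gamma$.

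Finally, the intersection $X_0=X'\cap\bigcap_{\gamma\in\Gamma}X_\gamma$ is conull by the countability of $\Gamma$, and for every $x\in X_0$ the map $\hat\phi_x$ is measurable and satisfies $\hat\phi_x(\gamma s,\gamma z)=\gamma\hat\phi_x(s,z)$ for a.e.\ $(s,z)$ and every $\gamma\in\Gamma$; this is precisely the statement that $\hat\phi_x$ is a measurable $\Gamma$-map. The one delicate point is the null-preservation of the action map $(s,x,z)\mapsto(sx,z)$, but this is built into the definition of a Lebesgue $S$-space, so no real obstacle arises; the argument is essentially a double application of Fubini combined with countability.
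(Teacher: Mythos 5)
Your proposal is correct and follows essentially the same route as the paper's proof: fix a Borel representative of $\phi$, use that the action map $S\times X\to X$ is non-singular to pull the conull equivariance set back to a conull subset of $S\times X\times Z$, apply Fubini to extract a conull set of good $x$'s, and invoke the countability of $\Gamma$ to intersect over all group elements. The only cosmetic difference is that you treat each $\gamma$ separately and intersect the sets $X_\gamma$ at the end, whereas the paper intersects over $\gamma$ at the level of $X\times Z$ from the start; this changes nothing of substance.
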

\begin{proof}
	Fix a Borel map $\phi$ in its class of equivalent measurable maps. The Borel set
	\[
		A=\{(x,z)\in X\times Z \mid \phi(\gamma x,\gamma z)=\gamma \phi(x,z),\ \gamma\in\Gamma\}
	\] 
	is co-null in $X\times Z$. So by Fubini's theorem (\cite[Theorem 8.19]{kechris}), 
	there is a co-null set $X_0\subset X$
	so that for each $x\in X_0$ for a.e. $z\in Z$ one has $(x,z)\in A$. 
	Since the action map $S\times X\to X$ is non-singular, the preimage of $X_0$ 
	is co-null in $S\times X$, and so for a co-null
	set $X_1\subset X$ for $x\in X_1$ the set $S_x=\{ s\in S \mid sx\in X_0\}$ 
	is co-null in $S$.
	Therefore for every $x\in X_1$, $s\in S_x$, and a.e. $z\in Z$: 
	\[
		\hat\phi_x(\gamma s,\gamma z)=\phi((\gamma s) x,\gamma z)=\phi(\gamma (sx),\gamma z)
		=\gamma \phi(sx,z)=\hat\phi_x(s,z)
	\]
	for all $\gamma\in \Gamma$.
\end{proof}

\section{Algebraic representations of ergodic actions (AREA)} \label{sec:gate}

Throughout this section we fix the following data:
\begin{itemize}
\item a lcsc group $S$,
\item an ergodic $S$-Lebesgue space $Y$,
\item a field $k$ with a non-trivial absolute value which is separable and complete (as a metric space),
\item a $k$-algebraic group ${\bf G}$,
\item a continuous homomorphism $\rho:S\to {\bf G}(k)$,
where ${\bf G}(k)$ is regarded as a Polish group (see Proposition~\ref{polishing}).
\end{itemize}

\begin{defn}
Given all the data above, an \emph{algebraic representation} of $Y$
consists of the following data
\begin{itemize}
\item a $k$-${\bf G}$-algebraic variety ${\bf V}$,
\item an $S$-equivariant Lebesgue map $\phi:Y \to {\bf V}(k)$, where
${\bf V}(k)$ is regarded as a Polish space (see Proposition~\ref{polishing}).
\end{itemize}
Sometimes we abbreviate the notation by saying that ${\bf V}$ is an \emph{algebraic representation of} $Y$,
and denote $\phi$ by $\phi_{\bf V}$ for clarity.
A morphism from the algebraic representation ${\bf U}$ to the algebraic representation ${\bf V}$ consists of
\begin{itemize}
\item a $k$-algebraic map $\psi:{\bf U}\to {\bf V}$ which is ${\bf G}$-equivariant,
and such that $\phi_{\bf V}$ agrees almost everywhere with $\psi\circ \phi_{\bf U}$.
\end{itemize}
An algebraic representation ${\bf V}$ of $Y$ is said to be a \emph{coset algebraic representation}
if in addition
${\bf V}={\bf G}/{\bf H}$ for some $k$-algebraic subgroup ${\bf H}<{\bf G}$.
\end{defn}

\begin{prop} \label{coset contraction}
Let ${\bf V}$ be an algebraic representation of $Y$.
Then there exists a coset algebraic representation ${\bf G}/{\bf H}$
and a morphism of representations from ${\bf G}/{\bf H}$ to ${\bf V}$,
that is a $k$-${\bf G}$-algebraic map $i:{\bf G}/{\bf H}\to {\bf V}$
such that $\phi_{\bf V}=i\circ \phi_{{\bf G}/{\bf H}}$.
\end{prop}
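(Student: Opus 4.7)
The plan is to use ergodicity of $Y$ together with the fact that $G$-orbits in $V=\mathbf{V}(k)$ are locally closed (Proposition~\ref{polishing}) to conclude that $\phi_{\mathbf{V}}$ essentially lands in a single orbit, and then to identify that orbit with a coset variety.

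First, I would observe that because every $G$-orbit in $V$ is locally closed, the orbit space $V/G$ is countably separated as a Borel space: locally closed orbits ensure that the Borel quotient structure admits a countable separating family (each orbit being Borel, and Polish orbit spaces of continuous Polish group actions with locally closed orbits are countably separated in the standard way). The composition $Y\overto{\phi_{\mathbf{V}}} V \to V/G$ is an $S$-invariant Lebesgue map (since the $S$-action on $V$ factors through $G$ via $\rho$ and $G$ acts trivially on $V/G$). By ergodicity of $S\acts Y$ and countable separation of $V/G$, this map is essentially constant. Hence there exists $v\in V$ such that $\phi_{\mathbf{V}}(y)\in Gv$ for a.e.\ $y\in Y$.

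Next, I would invoke Proposition~\ref{polishing} directly: the orbit $\mathbf{G}v$ is a $k$-subvariety of $\mathbf{V}$, the stabilizer $\mathbf{H}=\Stab_{\mathbf{G}}(v)$ is defined over $k$, and the $k$-algebraic orbit map $i:\mathbf{G}/\mathbf{H}\to \mathbf{V}$ (with image $\mathbf{G}v$) restricts on $k$-points to a $G$-equivariant homeomorphism $G/H\overto{\sim}Gv\subset V$. Let $\iota:Gv\to G/H$ denote its (continuous, $G$-equivariant) inverse, and define
\[
	\phi_{\mathbf{G}/\mathbf{H}}:Y\longrightarrow (\mathbf{G}/\mathbf{H})(k)=G/H,
	\qquad
	\phi_{\mathbf{G}/\mathbf{H}}(y)\defq \iota(\phi_{\mathbf{V}}(y)).
\]
This is well defined almost everywhere by the previous step, it is Lebesgue (composition of a Lebesgue map with a Borel map between Polish spaces), and it is $S$-equivariant because both $\phi_{\mathbf{V}}$ and $\iota$ are.

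Finally, $\mathbf{G}/\mathbf{H}$ equipped with $\phi_{\mathbf{G}/\mathbf{H}}$ is then a coset algebraic representation, and by construction $i\circ \phi_{\mathbf{G}/\mathbf{H}}=\phi_{\mathbf{V}}$ almost everywhere, so $i$ is the desired morphism of representations. The only real obstacle is the first step, namely ensuring that $V/G$ is countably separated so that ergodicity forces $\phi_{\mathbf{V}}$ into a single orbit; but this is precisely what the local closedness of orbits in Proposition~\ref{polishing} is designed to deliver, via the standard Effros-type argument in the Polish category.
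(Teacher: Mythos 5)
Your proposal is correct and follows essentially the same route as the paper: use local closedness of orbits (Proposition~\ref{polishing}) plus ergodicity to force $\phi_{\bf V}$ into a single orbit $Gv$, then pull back through the homeomorphism $G/H\to Gv$ furnished by the same proposition. The paper merely spells out the ``countably separated quotient'' step explicitly, via a countable basis of the $T_0$ space $V/G$ and the full-measure intersection argument, where you cite it as the standard Effros-type fact.
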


This follows essentially from Proposition~\ref{polishing} together with the well known argument given in \cite[Lemma 5.2.11]{zimmer-book}.
For the reader's convenience we reproduce this argument below.

\begin{proof}
Denote $V={\bf V}(k)$ and $G={\bf G}(k)$.
By Proposition~\ref{polishing} we know that every $G$-orbit is locally closed in $G$.
Consider the orbit space
$V/G$ endowed with the quotient topology and Borel structure.
The map $Y\to V \to V/G$ is Borel.
We push the measure class given on $Y$ and obtain a measure class on $V/G$.
Since $V$ is Polish it has a countable basis thus so does $V/G$.
Let $\{U_n~|~n\in \bbN\}$ be sequence of subsets of $V/G$ consisting of the elements of a countable basis and their complements.
Set
\[ U=\bigcap \{U_n~|~n\in \bbN,~U_n \mbox{ has a full measure in } V/G \}. \]
Then $U$ has a full measure and in particular it is non-empty.
We claim that $U$ is a singleton.
Indeed,
since every $G$-orbit is locally closed in $V$, the quotient topology of $V/G$ is $T_0$,
thus if $U$ would contain two distinct points we could find a basis set $U_n$ which separates them,
but by the ergodicity of $G$ on $Y$ either it or its complement would be of full measure, which contradicts the definition of $U$.

Fixing $v\in V$ which is in the preimage of $U$ we conclude that $\phi(X)$ is essentially contained in $Gv$.
Let ${\bf H}<{\bf G}$ be the stabilizer of $v$ and $H={\bf H}(k)$.
By Proposition~\ref{polishing} we get
a $k$-algebraic map $i:{\bf G}/{\bf H}\to {\bf V}$
whose restriction to $G/H$ gives a homeomorphism with the
orbit $Gv$.
We let $\phi_{{\bf G}/{\bf H}}=(i|_{G/H})^{-1} \circ \phi$.
We are done by extending the codomain of $\phi_{{\bf G}/{\bf H}}$ to ${\bf G}/{\bf H}(k)$ via the embedding $G/H \hookrightarrow {\bf G}/{\bf H}(k)$.
\end{proof}

In \cite[Definition~9.2.2]{zimmer-book}, following Mackey,
Zimmer defined the notion ``algebraic hull of a cocycle".
We will not discuss this notion here,
but we do point out its close relation with the following theorem (to be precise, it coincides with the group ${\bf H}_0$
appearing in the proof below).

\begin{theorem}[cf.\ {\cite[Proposition~9.2.1]{zimmer-book}}] \label{thm:gate}
The category of algebraic representations of $Y$ has an initial object.
Moreover, there exists an initial object which is a coset algebraic representation.
\end{theorem}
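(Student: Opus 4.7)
My plan is to construct the initial object as a ``minimal'' coset algebraic representation, following the circle of ideas originating with Mackey and Zimmer. By Proposition~\ref{coset contraction}, every algebraic representation of $Y$ receives a morphism from some coset algebraic representation, so it suffices to find a coset algebraic representation $({\bf G}/{\bf H}_0,\phi_0)$ that is initial within the subcategory of coset algebraic representations; the required morphism to a general ${\bf V}$ is then obtained by composition with any coset refinement of ${\bf V}$, and uniqueness of that composite is reduced to uniqueness of morphisms between coset representations, which I verify separately.

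To produce such a minimal ${\bf H}_0$, consider the collection $\mathcal{H}$ of $k$-algebraic subgroups ${\bf H}<{\bf G}$ for which ${\bf G}/{\bf H}$ carries the structure of a coset algebraic representation of $Y$. This set is non-empty (it contains ${\bf G}$ itself, via the constant representation to a point). Since the Zariski topology on ${\bf G}$ is Noetherian, its closed subvarieties, and in particular its algebraic subgroups, satisfy the descending chain condition, so $\mathcal{H}$ admits a minimal element ${\bf H}_0$, with associated representation $\phi_0$. Now given another coset algebraic representation $\phi_{{\bf G}/{\bf H}}:Y\to ({\bf G}/{\bf H})(k)$, form the product map $\Phi=(\phi_0,\phi_{{\bf G}/{\bf H}}):Y\to ({\bf G}/{\bf H}_0\times{\bf G}/{\bf H})(k)$, an $S$-equivariant Lebesgue map into the $k$-points of a $k$-${\bf G}$-variety. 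Repeating the orbit-separation argument used in the proof of Proposition~\ref{coset contraction} --- which rests on ergodicity of $Y$, the Polish second-countable structure of the orbit space, and local closedness of $G$-orbits in the $k$-points of a $k$-${\bf G}$-variety --- shows that the essential image of $\Phi$ is contained in a single $G$-orbit, namely the orbit of $(e{\bf H}_0,g{\bf H})$ for some $g\in G$, whose algebraic stabilizer is ${\bf H}_0\cap g{\bf H}g^{-1}$. This exhibits ${\bf H}_0\cap g{\bf H}g^{-1}\in\mathcal{H}$; as it is contained in ${\bf H}_0$, minimality forces ${\bf H}_0\cap g{\bf H}g^{-1}={\bf H}_0$, i.e., ${\bf H}_0\subseteq g{\bf H}g^{-1}$. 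Composing the canonical projection ${\bf G}/{\bf H}_0\to {\bf G}/(g{\bf H}g^{-1})$ with right translation by $g$, a $k$-${\bf G}$-equivariant isomorphism ${\bf G}/(g{\bf H}g^{-1})\to {\bf G}/{\bf H}$, produces the required morphism; tracing through the construction, the almost sure identity $\phi_{{\bf G}/{\bf H}}(y)=s g{\bf H}$ whenever $\phi_0(y)=s{\bf H}_0$ shows that this morphism intertwines $\phi_0$ and $\phi_{{\bf G}/{\bf H}}$.

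Uniqueness is an essentially formal matter: if $\psi_1,\psi_2:{\bf G}/{\bf H}_0\to{\bf V}$ are two morphisms of algebraic representations, their equalizer is a $k$-Zariski closed subvariety of ${\bf G}/{\bf H}_0$ that is ${\bf G}$-invariant (since both $\psi_i$ are ${\bf G}$-equivariant) and contains $\phi_0(y)$ for almost every $y$, hence is non-empty; since ${\bf G}$ acts transitively on the variety ${\bf G}/{\bf H}_0$, the equalizer equals all of ${\bf G}/{\bf H}_0$, and $\psi_1=\psi_2$. The main obstacle is concentrated in the second paragraph: one must carefully transport the single-orbit conclusion of Proposition~\ref{coset contraction} to the product ${\bf G}/{\bf H}_0\times{\bf G}/{\bf H}$ and correctly identify the stabilizer of the chosen orbit representative as the conjugate-intersection ${\bf H}_0\cap g{\bf H}g^{-1}$, so that minimality of ${\bf H}_0$ translates into the inclusion ${\bf H}_0\subseteq g{\bf H}g^{-1}$ that yields the desired morphism.
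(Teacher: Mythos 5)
Your proof is correct and follows essentially the same route as the paper: a minimal subgroup $\mathbf{H}_0$ obtained from the Noetherian property, the product representation combined with Proposition~\ref{coset contraction} to produce the morphism (your explicit stabilizer computation $\mathbf{H}_0\cap g\mathbf{H}g^{-1}$ is just an unwound version of the paper's observation that $p_2\circ i$ must be an isomorphism by minimality), and transitivity of the $\mathbf{G}$-action on $\mathbf{G}/\mathbf{H}_0$ for uniqueness. The only cosmetic difference is that you first reduce to coset targets before forming the product, whereas the paper applies the coset contraction directly to $\mathbf{V}\times\mathbf{G}/\mathbf{H}_0$ for a general $\mathbf{V}$.
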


\begin{proof}
We consider the collection
\[ \{{\bf H}<{\bf G}~|~{\bf H}\mbox{ is defined over } k \mbox{ and there exists a coset representation to } {\bf G}/{\bf H} \}. \]
This is a non-empty collection as it contains ${\bf G}$.
By the Neotherian property, this collection contains a minimal element.
We choose such a minimal element ${\bf H}_0$
and fix corresponding
$\phi_0:Y \to ({\bf G}/{\bf H}_0)(k)$.
We argue to show that this coset representation is the required initial object.

Fix any algebraic representation of $Y$, ${\bf V}$.
It is clear that, if it exists, a morphism of algebraic representations from ${\bf G}/{\bf H}_0$ to ${\bf V}$ is unique, as two different ${\bf G}$-maps
${\bf G}/{\bf H}_0\to {\bf V}$ agree nowhere.
We are left to show existence.
To this end we consider
the product representation ${\bf V}\times {\bf G}/{\bf H}_0$ given by $\phi=\phi_{\bf V}\times \phi_0$.
Applying Proposition~\ref{coset contraction} to this product representation we obtain the commutative diagram


\begin{equation} \label{diag-AG}
\xymatrix{ Y \ar@{.>}[r] \ar[d]^{\phi_{\bf V}} \ar[rd]^{\phi} \ar@/^3pc/[rrd]^{\phi_0} & {\bf G}/{\bf H} \ar@{.>}[d]_{i} &  \\
		   {\bf V} & {\bf V}\times {\bf G}/{\bf H}_0 \ar[r]^{~~p_2} \ar[l]_{p_1~~~~} & {\bf G}/{\bf H}_0  }
\end{equation}

By the minimality of ${\bf H}_0$, the ${\bf G}$-morphism $p_2\circ i:{\bf G}/{\bf H} \to {\bf G}/{\bf H}_0$ must be a $k$-isomorphism.
We thus obtain the $k$-${\bf G}$-morphism
\[ p_1\circ i \circ (p_2\circ i)^{-1}:{\bf G}/{\bf H}_0 \to {\bf V}. \]
\end{proof}

\begin{defn}[Algebraic Gate] \label{def:gate}
With a slight abuse of terminology (as it is not canonical), we call (a choice of) a coset representation 
$Y\to {\bf G}/{\bf H}(k)$ which is initial as an algebraic representation of $Y$, 
{\em the algebraic gate} of $Y$. 
In case ${\bf H}\lneq {\bf G}$ we say that the algebraic gate of $Y$ is non-trivial.
\end{defn}

The notion of the algebraic gate and the applications that we will derive below are only interesting if the gate is non-trivial.
The following theorem gives a criterion for non-triviality.

\begin{theorem} \label{AMEnontriv}
Assume the $S$-Lebesgue space $Y$ is both amenable and metrically ergodic.
Assume the $k$-algebraic group ${\bf G}$ is connected, $k$-simple and adjoint
and assume that $\rho(S)$ is unbounded in ${\bf G}(k)$.
Then the gate of $Y$ is non-trivial.
\end{theorem}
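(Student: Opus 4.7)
The plan is to argue by contradiction: suppose the gate of $Y$ is trivial, i.e.\ that the initial coset representation is ${\bf G}/{\bf G}$. By initiality, every algebraic representation $\phi_{\bf V}:Y\to{\bf V}(k)$ factors through a ${\bf G}$-morphism ${\bf G}/{\bf G}\to{\bf V}$, and so takes an essentially constant value which is ${\bf G}$-fixed in ${\bf V}$. Since ${\bf G}$ is $k$-simple and adjoint, it has no fixed point on ${\bf G}/{\bf H}$ for any proper $k$-algebraic subgroup ${\bf H}\lneq{\bf G}$. Observe first that if $\rho(S)$ were not Zariski dense, the constant map $Y\to({\bf G}/{\bf G}')(k)$ to the coset of the identity (where ${\bf G}'$ is the Zariski closure of $\rho(S)$) would already be an algebraic representation forcing the gate to sit inside ${\bf G}'\lneq{\bf G}$; so we may assume $\rho(S)$ is Zariski dense in ${\bf G}$.

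To produce a non-constant algebraic representation of $Y$, I would combine amenability with the unboundedness hypothesis. Choose a faithful $k$-rational representation ${\bf G}\hookrightarrow\operatorname{GL}({\bf U})$, and let ${\bf W}$ be a projective ${\bf G}$-variety on which ${\bf G}$ acts with proximal dynamics (for instance, a suitable Grassmannian-orbit closure inside ${\bf U}$). Using the bornological and Polish framework of Section~\ref{alg perlim} together with completeness of $k$, one obtains a compact model of ${\bf W}(k)$ on which $\rho(S)$ acts continuously. Amenability of $Y$, applied to the $S$-equivariant affine bundle of probability measures over this compact model, supplies an $S$-equivariant Lebesgue map $\psi:Y\to\Prob({\bf W}(k))$.

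The next step is to upgrade $\psi$ to a point-valued map, using metric ergodicity together with the proximality induced by the unboundedness of $\rho(S)$. The assignment $y\mapsto\supp\psi(y)$ into the Effros space of closed subsets of ${\bf W}(k)$ is $S$-equivariant, and the target carries a separable $S$-metric (e.g.\ Hausdorff distance on a bounded model) on which $S$ acts by isometries; metric ergodicity then essentially constrains the support to lie in a single ${\bf G}$-orbit, and proximality collapses it to a point, yielding an $S$-equivariant Lebesgue map $\phi:Y\to{\bf W}(k)$. Proposition~\ref{polishing} guarantees that $\phi(Y)$ lies essentially in a single $G$-orbit ${\bf G}\cdot v$, and Proposition~\ref{coset contraction} provides a coset representation $Y\to({\bf G}/{\bf H})(k)$ for ${\bf H}$ the stabilizer of $v$. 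The main obstacle is to verify that ${\bf H}\lneq{\bf G}$: if instead ${\bf H}={\bf G}$, then $v$ would be ${\bf G}$-fixed on ${\bf W}$ and $\delta_v$ a $\rho(S)$-fixed probability measure on this proximal boundary, contradicting the construction of ${\bf W}$ and the unboundedness of $\rho(S)$. Carrying this argument out rigorously in the generality of a complete valued $k$ -- in particular in the $k$-anisotropic case where no minimal $k$-parabolic is available -- is the delicate point, and leans heavily on the bornology on ${\bf G}(k)$ of \cite{BDL}*{\S 4} to produce a suitable compact model for ${\bf W}(k)$ that bypasses the classical local-field setup.
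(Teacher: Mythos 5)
Your proposal founders precisely at the point you yourself flag as ``the delicate point,'' and that point is not a technicality one can defer: it is the entire content of the theorem. The paper's proof is very short because it delegates this step to \cite[Corollary~1.17]{BDL}: amenability of $Y$ implies that \emph{either} the gate of $Y$ is non-trivial, \emph{or} there is an $S$-equivariant Lebesgue map from $Y$ to a separable metric space on which ${\bf G}(k)$ acts by isometries with \emph{bounded} stabilizers. Metric ergodicity forces the latter map to be essentially constant, so $\rho(S)$ would fix a point and hence be bounded, contradicting the hypothesis. Note that metric ergodicity is applied only at this last stage, to the auxiliary metric space produced by \cite{BDL} --- not, as in your sketch, to supports of measures.

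Your attempted substitute for that input --- a projective ${\bf G}$-variety ${\bf W}$ with proximal dynamics and ``a compact model of ${\bf W}(k)$'' --- does not exist in the stated generality, and no amount of bornological bookkeeping produces one. If $k$ is complete but not locally compact, ${\bf W}(k)$ is a Polish but non-compact space, so $\Prob({\bf W}(k))$ is not weak-* compact and the amenability of $Y$ gives you no equivariant measure-valued section at all; and if ${\bf G}$ is $k$-anisotropic there is no proper $k$-parabolic and hence no proximal projective action to begin with, even though $\rho(S)$ can still be unbounded when $k$ is not local. The object that actually works for arbitrary complete valued $k$ is the compact space of homothety classes of \emph{seminorms} on $k^n$ (for a faithful irreducible $k$-representation of ${\bf G}$), together with the dichotomy of \cite[Proposition~1.9]{BDL}; this is exactly the machinery behind \cite[Corollary~1.17]{BDL} and behind Lemma~\ref{lem:amen} later in the paper. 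The subsequent steps of your sketch (passing to supports in the Effros space, collapsing via ``proximality'') are likewise unjustified and unnecessary once the correct compactification is used. In short: the reduction to a coset representation via Propositions~\ref{polishing} and~\ref{coset contraction} and the final bounded-stabilizer contradiction are fine, but the construction of the intermediate equivariant map is missing, and the route you propose for it fails outside the local-field, $k$-isotropic setting that the theorem is specifically designed to transcend.
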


\begin{proof}
Using the amenability of $Y$, it follows from \cite[Corollary~1.17]{BDL} that if the gate of $Y$ is non-trivial then there exists a separable metric space $V$ on which ${\bf G}(k)$ acts by isometries and with bounded stabilizers and an $S$-equivariant map $Y\to V$.
But the latter possibility is ruled out by the assumption that $Y$ is $S$-metrically ergodic 
and the assumption that $\rho(S)<{\bf G}(k)$ is unbounded.
\end{proof}

Before proceeding to our next theorem,
let us state without a proof the following proposition
which
provides an identification of $\Aut_{\bf G}({\bf G}/{\bf H})$ that we will keep using throughout the paper.
The proposition is well known and easy to prove.

\begin{prop} \label{aut-identification}
Fix a $k$-subgroup ${\bf H}<{\bf G}$ and denote ${\bf N}=N_{\bf G}({\bf H})$.
This is again a $k$-subgroup.
Any element $n\in {\bf N}$ gives a ${\bf G}$-automorphism of ${\bf G}/{\bf H}$ by
$g{\bf H}\mapsto gn^{-1}{\bf H}$.
The homomorphism $ {\bf N} \to \Aut_{\bf G}({\bf G}/{\bf H})$ thus obtained is onto and its kernel is ${\bf H}$.
Under the obtained identification ${\bf N}/{\bf H} \simeq \Aut_{\bf G}({\bf G}/{\bf H})$,
the $k$-points of the $k$-group ${\bf N}/{\bf H}$ are identified with the $k$-${\bf G}$-automorphisms of ${\bf G}/{\bf H}$.
\end{prop}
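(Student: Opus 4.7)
The plan is to treat the proposition in four essentially independent steps, none of which should involve any substantive difficulty beyond careful bookkeeping with $k$-structures.

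First I would verify the foundational $k$-structure statement: $\mathbf{N}=N_{\mathbf{G}}(\mathbf{H})$ is a $k$-subgroup because it is the transporter of $\mathbf{H}$ into itself under conjugation, a construction that preserves being defined over $k$ (see \cite{borel}*{\S 1.7}). This immediately gives $\mathbf{N}/\mathbf{H}$ the structure of a $k$-coset variety by the results cited already in Section~\ref{alg perlim}.

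Next I would check that the formula $\phi_n(g\mathbf{H})\defq gn^{-1}\mathbf{H}$ is well defined for $n\in\mathbf{N}$: if $g_1\mathbf{H}=g_2\mathbf{H}$, then $g_1^{-1}g_2\in\mathbf{H}$ and so $n g_1^{-1}g_2 n^{-1}\in n\mathbf{H}n^{-1}=\mathbf{H}$ because $n$ normalizes $\mathbf{H}$. The map is $\mathbf{G}$-equivariant because left multiplication on the left coset space commutes with right multiplication by $n^{-1}$, and has inverse $\phi_{n^{-1}}$, so it is an automorphism. A direct computation gives $\phi_{nn'}=\phi_n\circ\phi_{n'}$, confirming that $n\mapsto\phi_n$ is a group homomorphism $\mathbf{N}\to\Aut_\mathbf{G}(\mathbf{G}/\mathbf{H})$, and $\phi_n=\id$ forces $gn^{-1}\mathbf{H}=g\mathbf{H}$ for all $g$, equivalently $n\in\mathbf{H}$; this identifies the kernel.

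For surjectivity, let $\psi\in\Aut_\mathbf{G}(\mathbf{G}/\mathbf{H})$ and set $\psi(e\mathbf{H})=g_0\mathbf{H}$. Since $\psi$ is $\mathbf{G}$-equivariant, the stabilizer of the target must equal the stabilizer of $e\mathbf{H}$, giving $g_0\mathbf{H}g_0^{-1}=\mathbf{H}$, so $n\defq g_0^{-1}\in\mathbf{N}$ and $\psi$ agrees with $\phi_n$ on the base point. Because $\mathbf{G}$-equivariant self-maps of the transitive $\mathbf{G}$-space $\mathbf{G}/\mathbf{H}$ are determined by the image of $e\mathbf{H}$, we conclude $\psi=\phi_n$. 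Combined with the previous step, this yields the isomorphism of abstract groups $\mathbf{N}/\mathbf{H}\simeq\Aut_\mathbf{G}(\mathbf{G}/\mathbf{H})$.

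Finally, for the $k$-rationality assertion, which I expect to be the only mildly delicate point, I would argue as follows. The orbit map $\mathbf{N}/\mathbf{H}\hookrightarrow\mathbf{G}/\mathbf{H}$, sending $n\mathbf{H}\mapsto n^{-1}\mathbf{H}$, is a $k$-morphism, and the universal action $\mathbf{G}\times(\mathbf{N}/\mathbf{H})\to\mathbf{G}/\mathbf{H}$, $(g,n\mathbf{H})\mapsto gn^{-1}\mathbf{H}$, is defined over $k$. Hence every $k$-rational point of $\mathbf{N}/\mathbf{H}$ produces a $k$-morphism of $\mathbf{G}/\mathbf{H}$ that is $\mathbf{G}$-equivariant, i.e.\ lies in the $k$-$\mathbf{G}$-automorphism group. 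Conversely, given a $k$-$\mathbf{G}$-automorphism $\psi$, the point $\psi(e\mathbf{H})\in(\mathbf{G}/\mathbf{H})(k)$ lies in the image of $\mathbf{N}/\mathbf{H}$, and since the inclusion $\mathbf{N}/\mathbf{H}\hookrightarrow\mathbf{G}/\mathbf{H}$ is injective and defined over $k$, a $\bar{k}$-point of $\mathbf{N}/\mathbf{H}$ is $k$-rational as soon as its image in $\mathbf{G}/\mathbf{H}$ is; this gives the corresponding $k$-point of $\mathbf{N}/\mathbf{H}$ producing $\psi$, completing the identification.
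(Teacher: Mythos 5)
The paper states this proposition explicitly \emph{without} proof (``well known and easy to prove''), so there is no argument of the authors to compare against; judged on its own, your proof is the standard one and is essentially correct: the well-definedness, equivariance, homomorphism property, kernel computation, and the surjectivity argument via stabilizers and transitivity are all fine.

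The one step that is genuinely under-justified is the final descent claim, that ``a $\bar{k}$-point of $\mathbf{N}/\mathbf{H}$ is $k$-rational as soon as its image in $\mathbf{G}/\mathbf{H}$ is'' because the map $\mathbf{N}/\mathbf{H}\to\mathbf{G}/\mathbf{H}$ is injective and defined over $k$. Injectivity of a $k$-morphism does not by itself reflect $k$-rationality of points: the Frobenius $x\mapsto x^p$ on $\mathbb{A}^1$ over an imperfect field is an injective $k$-morphism whose image contains $k$-points with non-rational preimages, and since the paper insists on arbitrary complete valued fields (hence arbitrary characteristic and possibly imperfect $k$), this case cannot be waved away. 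What saves the argument is that $\mathbf{N}/\mathbf{H}\to\mathbf{G}/\mathbf{H}$ is not merely injective but an isomorphism onto its (closed) image: the restriction of the quotient map $\mathbf{G}\to\mathbf{G}/\mathbf{H}$ to $\mathbf{N}$ is separable (its differential at $e$ surjects onto a space of dimension $\dim\mathbf{N}-\dim\mathbf{H}$), so by the universal property of quotients \cite[Theorem~6.8]{borel} the induced map $\mathbf{N}/\mathbf{H}\to\pi(\mathbf{N})$ is a $k$-isomorphism, and $k$-points then pull back to $k$-points. With that substitution the last paragraph, and hence the whole proof, goes through.
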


The following theorem provides a most useful tool in the study of rigidity.

\begin{theorem} \label{yoneda}
Assume $\phi:Y\to {\bf G}/{\bf H}(k)$ is the algebraic gate of $Y$.
Let $S'$ be a lcsc group which acts on $Y$ commuting with the $S$-action.
Then there exists a unique homomorphism 
\[	
	\rho':S'\to N_{\bf G}({\bf H})/{\bf H}(k)
\]
which turns $\phi$ into an $S\times S'$-equivariant map, where $S\times S'$ acts on the codomain via $\rho\times \rho'$.
This homomorphism $\rho'$ is continuous.
\end{theorem}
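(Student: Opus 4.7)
The plan is to build $\rho'$ from the universality of the gate and then upgrade it from a set-theoretic homomorphism to a continuous one via a measurability argument.

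For each $s'\in S'$, the map $y\mapsto \phi(s'y)$ is again $S$-equivariant, because the $S$- and $S'$-actions on $Y$ commute and $\phi$ is $S$-equivariant, so it is an algebraic representation of $Y$ into $\mathbf{G}/\mathbf{H}$. By the initial-object property of the gate (Theorem~\ref{thm:gate}) it admits a unique factorization through a $k$-$\mathbf{G}$-morphism $\psi_{s'}\colon\mathbf{G}/\mathbf{H}\to\mathbf{G}/\mathbf{H}$, that is $\phi(s'y)=\psi_{s'}(\phi(y))$ for a.e.\ $y$. Running the same argument for $(s')^{-1}$ and appealing to uniqueness shows $\psi_{s'}$ is a two-sided invertible $k$-$\mathbf{G}$-automorphism, and Proposition~\ref{aut-identification} then produces the desired element $\rho'(s')\in (N_{\mathbf{G}}(\mathbf{H})/\mathbf{H})(k)$. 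The identity $\psi_{s_1's_2'}=\psi_{s_1'}\circ\psi_{s_2'}$, itself forced by uniqueness, gives the homomorphism property. Uniqueness of $\rho'$ is automatic: any competitor induces a $k$-$\mathbf{G}$-automorphism of $\mathbf{G}/\mathbf{H}$ agreeing with $\psi_{s'}$ at the point $\phi(y)$, hence agreeing with $\psi_{s'}$ identically, since two $k$-$\mathbf{G}$-maps out of a coset variety either coincide or disagree everywhere.

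Continuity of $\rho'$ is the substantive step. I would invoke the classical fact that every Haar-measurable homomorphism between Polish groups is continuous, reducing the task to proving $\rho'$ is Borel. The key structural observation is that the action of $N_{\mathbf{G}}(\mathbf{H})/\mathbf{H}$ on $\mathbf{G}/\mathbf{H}$ described in Proposition~\ref{aut-identification} is free, because $n\mathbf{H}\cdot(g\mathbf{H})=gn^{-1}\mathbf{H}$ fixes $g\mathbf{H}$ only when $n\in\mathbf{H}$. Viewing $\mathbf{G}/\mathbf{H}$ as a $k$-$(N_\mathbf{G}(\mathbf{H})/\mathbf{H})$-variety, Proposition~\ref{polishing} makes every orbit locally closed in $(\mathbf{G}/\mathbf{H})(k)$ and turns each orbit map into a homeomorphism onto its image; denote by $\iota_v$ its continuous inverse on the orbit of $v$. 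Now the set
\[
	E=\{(s',y)\in S'\times Y\mid \phi(s'y)\in (N_\mathbf{G}(\mathbf{H})/\mathbf{H})(k)\cdot\phi(y)\}
\]
is Borel, has conull slice in $y$ for every fixed $s'$ by construction of $\rho'$, and is therefore conull in the product by Fubini. On $E$ the function $n(s',y):=\iota_{\phi(y)}(\phi(s'y))$ is jointly Borel and equals the constant $\rho'(s')$ for a.e.\ $y$ at each fixed $s'$. A second application of Fubini picks $y_0$ for which $s'\mapsto n(s',y_0)$ is defined and agrees with $\rho'(s')$ off a null set; this exhibits $\rho'$ as Haar-measurable, hence continuous.

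The main obstacle I anticipate is precisely this joint-measurability step: the defining equation $\phi(s'y)=\rho'(s')\phi(y)$ comes out of the universal property \emph{one $s'$ at a time}, a.e.\ in $y$, so one cannot read off directly that $\rho'$ is a Borel function of $s'$ alone. The trick is to encode $\rho'(s')$ as the value of a jointly Borel function via the (free) orbit inverse $\iota_v$, and then to invoke Fubini twice to isolate the $s'$-dependence. Everything else—the homomorphism property, the uniqueness, and the passage from Haar-measurability to continuity—is formal.
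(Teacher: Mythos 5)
Your construction of $\rho'$ via the initial-object property, the homomorphism identity forced by uniqueness of the factoring morphism, and the identification through Proposition~\ref{aut-identification} all match the paper exactly. Where you diverge is the continuity argument. The paper lifts the problem to the function space $U=\LL(Y,V)$: it lets $M=N_{\bf G}({\bf H})/{\bf H}(k)$ act on $U$ by post-composition, invokes (a transported version of) \cite[Proposition~3.3.1]{zimmer-book} to get locally closed $M$-orbits in $U$, deduces that $m\mapsto m\circ\phi$ is a homeomorphism onto $M\phi$, and composes its inverse with the a.e.-defined measurable map $s'\mapsto\phi\circ s'$ to exhibit $\rho'$ as measurable, finishing with Rosendal's automatic continuity. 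You instead stay on $V=\mathbf{G}/\mathbf{H}(k)$ itself, using freeness of the $M$-action there to produce a Borel ``division'' map $(v,w)\mapsto\iota_v(w)$ and then two applications of Fubini to realize $\rho'$ as an a.e.\ slice of a jointly Borel function. Both routes are legitimate; yours trades the function-space orbit lemma for a more hands-on measure-theoretic argument (and the joint Borelness of $\iota$ can even be had from freeness plus Lusin--Souslin applied to the injection $(m,v)\mapsto(v,mv)$, without the locally-closed-orbits input). One wrinkle you should patch: your ``second application of Fubini'' is applied, as written, to the set $\{(s',y): n(s',y)=\rho'(s')\}$, whose measurability is exactly what is in question since $\rho'$ is not yet known to be Borel. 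The standard repair is to apply Fubini to the manifestly Borel set $\{(s',y,y'): n(s',y)=n(s',y')\}$, each of whose $s'$-slices is conull because $n(s',\cdot)$ is a.e.\ constant; this yields a $y_0$ with $n(s',y_0)=\rho'(s')$ for a.e.\ $s'$, and the rest of your argument goes through.
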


\begin{proof}
For a given $s'\in S'$ we consider the diagram
\begin{equation} \label{a-diag-AG}
\xymatrix{ Y \ar[r]^{\phi} \ar[d]^{s'} & {\bf G}/{\bf H} \ar@{.>}[d]^{\rho'(s')}  \\
		 Y \ar[r]^{\phi}   & {\bf G}/{\bf H}  }
\end{equation}
where we denote by $\rho'(s')$ the dashed arrow, which is the unique $k$-algebraic ${\bf G}$-equivariant morphism ${\bf G}/{\bf H} \to {\bf G}/{\bf H}$ given by the fact that $\phi$ is a gate map and $\phi\circ s'$ is an algebraic representation of $Y$.
By the uniqueness of the dashed arrow, the correspondence $s'\mapsto \rho'(s')$ is easily checked to form a homomorphism from $S'$ to the group of $k$-${\bf G}$-automorphisms of ${\bf G}/{\bf H}$ which we identify with $ N_{\bf G}({\bf H})/{\bf H}(k)$
using Proposition~\ref{aut-identification}.
We are left to check the continuity of $\rho'$.

To simplify the notations we let $V={\bf G}/{\bf H}(k)$, $M=N_{\bf G}({\bf H})/{\bf H}(k)$
and $U=\LL(Y,V)$.
We endow $U$ with the action of $M$ by post-composition, and the action of $S'$  by precomposition.
By the fact that $M$ acts freely on $V$, we get that $M$ acts freely on $U$ as well.
Using Proposition~\ref{polishing},
\cite[Proposition~3.3.1]{zimmer-book} gives that the $M$-action on $U$ has locally closed orbits\footnote{Actually, in
\cite[Proposition~3.3.1]{zimmer-book} it is assumed that $k$ is a local field of zero characteristic, as it relies on \cite[Theorem~3.1.3]{zimmer-book}, but upon replacing \cite[Theorem~3.1.3]{zimmer-book} with  Proposition~\ref{polishing} the proof applies verbatim here as well.}.
It follows that the $M$-orbit map $M\to U$, $m\mapsto m\circ \phi$ is a homeomorphism onto its image, $M\phi$.
We let $\alpha:M\phi\to M$ be its inverse.

By the fact that the map $S'\times Y \to V$, $(s',y)\mapsto \phi(s'y)$ is a.e defined and measurable, we get that the associated map $\beta:S'\to U$, $s'\mapsto \phi\circ s'$ is a.e defined and measurable, see  \cite[Chapter VII, Lemma 1.3]{margulis-book}.
Since by the definition of $\rho'$, $\phi\circ s'=\rho'(s')\circ \phi$, we conclude that $\rho'$ agrees a.e with $\alpha\circ \beta$ which is a.e defined and measurable.
It follows that $\rho'$ is measurable.
By \cite[Lemma 2.1]{Rosendal} we conclude that $\rho'$ is a continuous homomorphism.
\end{proof}


\section{Extension of homomorphisms defined on dense subgroups} \label{sec:extension}

Throughout this section we fix
\begin{itemize}
\item lcsc groups $S$ and $S'$ and a continuous homomorphism $\theta:S\to S'$ such that $\theta(S)$ is dense in $S'$,
\item a field $k$ with a non-trivial absolute value which is separable and complete (as a metric space),
\item a $k$-algebraic group ${\bf G}$,
\item a continuous homomorphism $\rho:S\to {\bf G}(k)$,
where ${\bf G}(k)$ is regarded as a Polish group (see Proposition~\ref{polishing}),
such that $\rho(S)$ is Zariski-dense in ${\bf G}$.
\end{itemize}
We will explain how under some assumptions the homomorphism $\rho$ extends to $S'$ via $\theta$.
The main result of this section is the following theorem.

\begin{theorem} \label{thm:extension}
	Assume ${\bf G}$ is a connected, adjoint, $k$-simple group
	and let ${\bf V}$ be a $k$-${\bf G}$-variety which has no ${\bf G}$-fixed point.
	Let $Z$ be an $S'$-Lebesgue space.
	Consider $Z$ as an $S$-Lebesgue space via $\theta$ and assume that there exists an $S$-equivariant Lebesgue map
	$\sigma:Z\to {\bf V}(k)$.
	Then there exists a continuous homomorphism $\bar{\rho}:S'\to {\bf G}(k)$ such that $\rho=\bar{\rho}\circ\theta$.
\end{theorem}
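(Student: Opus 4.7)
The plan is to reduce to an application of Theorem~\ref{yoneda} on the auxiliary $S$-Lebesgue space $Y=S'$, with $S$ acting by left translation through $\theta$ and the commuting right translation $s''\cdot s'=s'(s'')^{-1}$ supplying an $S'$-action. Ergodicity of the $S$-action is automatic: translation acts continuously on $L^\infty(S')$ in the weak-$*$ topology, so any $\theta(S)$-invariant function is $S'$-invariant by density, hence constant. Thus the apparatus of Section~\ref{sec:gate} applies to $Y=S'$.

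To manufacture a non-trivial algebraic representation of $Y$ out of $\sigma$, I would, via a Fubini argument (in the spirit of Lemma~\ref{L:lifting}), select $w\in Z$ in a full-measure set so that the Lebesgue map
\[
\tau_w\colon S'\to{\bf V}(k),\qquad s'\mapsto\sigma(s'w),
\]
is well defined, and use the $S$-equivariance of $\sigma$ to check that $\tau_w$ is $S$-equivariant. This $\tau_w$ cannot be essentially constant: the essential value would be $\rho(S)$-fixed, hence ${\bf G}$-fixed by Zariski density of $\rho(S)$, contradicting the hypothesis on ${\bf V}$. Choose an algebraic gate $\psi\colon S'\to{\bf G}/{\bf H}(k)$ of $Y$ via Theorem~\ref{thm:gate}; since $\tau_w$ factors through $\psi$, we have ${\bf H}\neq{\bf G}$. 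By Proposition~\ref{polishing} we may write $\psi(e)=g_0{\bf H}$ with $g_0\in{\bf G}(k)$.

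Theorem~\ref{yoneda} now produces a continuous homomorphism
\[
\rho'\colon S'\to N_{\bf G}({\bf H})/{\bf H}(k)
\]
rendering $\psi$ equivariant for $S\times S'$ via $\rho\times\rho'$. Specializing at $s'=e$, $s''=\theta(t)$ and using $\psi(\theta(t)^{-1})=\rho(t)^{-1}\psi(e)$ gives $\rho'(\theta(t))\cdot g_0{\bf H}=\rho(t)^{-1}g_0{\bf H}$, which by Proposition~\ref{aut-identification} forces $g_0^{-1}\rho(t)g_0\in N_{\bf G}({\bf H})(k)$ for every $t\in S$. By Zariski density of $\rho(S)$ in ${\bf G}$, the subgroup $g_0{\bf H}g_0^{-1}$---and hence ${\bf H}$---is normal in ${\bf G}$; by $k$-simplicity of ${\bf G}$ together with ${\bf H}\ne{\bf G}$, this forces ${\bf H}=\{e\}$. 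Consequently $\rho'$ lands in ${\bf G}(k)$ and the identity above simplifies to $\rho'(\theta(t))=g_0^{-1}\rho(t)g_0$, so
\[
\bar\rho(s')\defq g_0\rho'(s')g_0^{-1}
\]
is a continuous homomorphism $S'\to{\bf G}(k)$ with $\bar\rho\circ\theta=\rho$. Uniqueness follows from density of $\theta(S)$ in $S'$ and continuity.

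The principal conceptual obstacle is that the commuting action required by Theorem~\ref{yoneda} is unavailable on the given space $Z$, since there the $S$-action is nothing but the restriction of the $S'$-action through $\theta$. The remedy is to transplant $\sigma$ through an arbitrary sample point $w\in Z$ onto the auxiliary $Y=S'$, on which left and right translations commute.
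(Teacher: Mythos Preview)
Your overall architecture matches the paper's: transplant $\sigma$ along a generic orbit to obtain an $S$-equivariant Lebesgue map $S'\to\mathbf{V}(k)$, take the algebraic gate $\psi:S'\to\mathbf{G}/\mathbf{H}(k)$, observe $\mathbf{H}\lneq\mathbf{G}$, and then invoke Theorem~\ref{yoneda} for the right $S'$-action. The problem lies in how you extract the normality of $\mathbf{H}$ and the identity $\rho=\bar\rho\circ\theta$.

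You write $\psi(e)=g_0\mathbf{H}$ and then ``specialize at $s'=e$'' the two equivariance relations. But $\psi$ is a Lebesgue map, defined only almost everywhere, and both equivariances hold only almost everywhere (the null set for the right $S'$-equivariance from Theorem~\ref{yoneda} depends on the group element $s''$). There is no reason the identity $\psi(\theta(t)^{-1})=\rho(t)^{-1}\psi(e)=\rho'(\theta(t))\cdot\psi(e)$ should hold at the single point $e$, or at any prescribed point. Replacing $e$ by a generic $s'_0$ does not immediately rescue the argument either: the left relation controls $\psi$ on the set $\{\theta(t)s'_0:t\in S\}$, which may well be Haar-null in $S'$ (think $S$ countable, $S'$ a Lie group), so it cannot be compared directly with the a.e.\ information coming from the right relation. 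The same pointwise evaluation reappears when you conclude $\rho'(\theta(t))=g_0^{-1}\rho(t)g_0$.

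The paper circumvents both issues without ever evaluating $\psi$ at a point. For normality, it composes $\psi$ with the projection $\mathbf{G}/\mathbf{H}\to\mathbf{G}/\mathbf{N}$ (where $\mathbf{N}=N_{\mathbf{G}}(\mathbf{H})$): since $\rho'$ takes values in $\mathbf{N}/\mathbf{H}$, the resulting map is right $S'$-invariant as a Lebesgue map, hence essentially constant; its essential value is then $\rho(S)$-invariant, hence $\mathbf{G}$-fixed by Zariski density, forcing $\mathbf{N}=\mathbf{G}$. For the final identification $\rho=\bar\rho\circ\theta$, the paper isolates the needed a.e.\ manipulation as Lemma~\ref{lem:extension}: one forms the a.e.\ constant map $s'\mapsto\phi(s')\rho'(s')^{-1}$, calls its essential value $g$, and then compares at a \emph{generic} $s'$ chosen so that both $s'$ and $\theta(s)s'$ lie in the relevant full-measure sets. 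Your argument becomes correct once these two pointwise steps are replaced by the corresponding a.e.\ arguments.
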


The proof will rely on the following lemma.

\begin{lemma} \label{lem:extension}
	Assume that in addition to $\rho:S\to {\bf G}(k)$ we are also given a continuous homomorphism $\rho':S'\to {\bf G}(k)$ 
	and a Lebesgue map $\phi:S'\to {\bf G}(k)$ which is $S\times S'$-equivariant with respect to the 
	left $S$- and right $S'$-actions on $S'$ and their corresponding actions on ${\bf G}(k)$ via $\rho\times \rho'$.
	Then $\rho=\inn(g)\circ\rho'\circ \theta$ for some $g\in {\bf G}(k)$,
	where $\inn(g)$ denotes the corresponding inner automorphism of ${\bf G}(k)$.
\end{lemma}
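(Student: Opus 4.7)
The $S\times S'$-equivariance of $\phi$ packages two separate a.e.\ identities: a left $S$-piece, $\phi(\theta(s)s')=\rho(s)\phi(s')$, and a right $S'$-piece, $\phi(s't')=\phi(s')\rho'(t')$. My plan is to exploit the right-equivariance to pin down $\phi$ explicitly as a left-translate of $\rho'$, and then feed that explicit formula into the left-equivariance to read off the conjugation relation. In other words, I will first prove that $\phi(v)=g\rho'(v)$ a.e.\ for a single constant $g\in {\bf G}(k)$, and then derive $\rho(s)=g\rho'(\theta(s))g^{-1}$ from the left-equivariance.

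For the first step, I would fix a Borel representative of $\phi$ and apply Fubini (in the spirit of Lemma~\ref{L:lifting}) to the right-equivariance to obtain a co-null set of ``good'' points $s'_0\in S'$ at which $\phi(s'_0 t')=\phi(s'_0)\rho'(t')$ holds for a.e.\ $t'\in S'$. Fixing any such $s'_0$ and setting $g=\phi(s'_0)\rho'(s'_0)^{-1}\in {\bf G}(k)$, the substitution $v=s'_0 t'$ is a measure-class-preserving translation of $S'$, so the identity becomes $\phi(v)=g\rho'(v)$ for a.e.\ $v\in S'$.

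For the second step, I would plug $\phi(v)=g\rho'(v)$ into the left-equivariance. Expanding both sides of $\phi(\theta(s)s')=\rho(s)\phi(s')$ gives $g\rho'(\theta(s))\rho'(s')=\rho(s)g\rho'(s')$ for a.e.\ $(s,s')$; cancelling the common right factor $\rho'(s')$ yields $g\rho'(\theta(s))=\rho(s)g$, i.e.\ $\rho(s)=g\rho'(\theta(s))g^{-1}=\inn(g)\circ\rho'\circ\theta(s)$ a.e.\ in $s\in S$. Since $\rho$ and $\inn(g)\circ\rho'\circ\theta$ are both continuous homomorphisms $S\to {\bf G}(k)$ that agree on a co-null, hence dense, subset of $S$, they coincide everywhere, which is exactly the claim.

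The only delicate point in this program is the Fubini / change-of-variable manoeuvre that produces the constant $g$: it has to be carried out carefully at the level of Borel representatives so that the a.e.\ statement genuinely survives the translation by $s'_0$. This is standard within the Lebesgue-map framework of Section~\ref{sec:AME}. It is worth noting that the Zariski density of $\rho(S)$ in ${\bf G}$ does not enter the argument for this lemma at all; it is presumably reserved for the passage from Lemma~\ref{lem:extension} to Theorem~\ref{thm:extension}, where one needs to actually build the extension $\bar\rho$ (and where uniqueness up to inner conjugation must be upgraded to a genuine extension of $\rho$).
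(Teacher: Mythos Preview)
Your proof is correct and follows essentially the same approach as the paper: the paper defines $\psi(s')=\phi(s')\rho'(s')^{-1}$, observes it is right $S'$-invariant and hence a.e.\ equal to some constant $g$ (your ``good point'' Fubini argument amounts to the same thing), and then substitutes $\phi=g\rho'$ into the left $S$-equivariance to read off $\rho(s)=g\rho'(\theta(s))g^{-1}$. Your remark that Zariski density of $\rho(S)$ is not used here is also accurate.
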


\begin{proof}[Proof of the lemma]
The map $\psi:S'\to {\bf G}(k)$ given by $s'\mapsto \phi(s')\rho'(s')^{-1}$ is $S'$-invariant, hence constant a.e. 
We let $g$ be its essential value. 
Given $s\in S$ we pick $s'\in S'$ such that both $s'$ and $\theta(s)s'$ are both $\phi$ and $\psi$-generic. 
We get
\[ 
	\begin{split}
		\rho(s)\phi(s') &=\phi(\theta(s)s')=\psi(\theta(s)s')\rho'(\theta(s)s')\\
		&=g\rho'(\theta(s))\rho'(s') =g\rho'(\theta(s))\psi(s')^{-1}\phi(s')\\
		&=g\rho'(\theta(s))g^{-1}\phi(s'), 
	\end{split}
\]
and conclude that $\rho(s)=\inn(g)\circ\rho'\circ \theta(s)$.
\end{proof}

\begin{proof}[Proof of {Theorem~\ref{thm:extension}}:]
First, we replace the unknown $S'$-Lebesgue space $Z$ with the group $S'$ endowed with its Haar measure. 
We do it by restricting $\sigma$ to a generic orbit.
More pedantically, we argue as follows.
We consider the map
\[ 
	S'\times Z \to {\bf V}(k), \quad (s',z) \mapsto \sigma(s'z) 
\]
and use Fubini's theorem in order to fix a generic point $z\in Z$ such that the map $s' \to \sigma(s'z)$ is a.e defined on $S'$. 
We observe that the latter map $S'\to {\bf V}(k)$ is $S$-equivariant.
Replacing $\sigma$ with this map, we assume as we may that $Z=S'$ and $\sigma:S'\to {\bf V}$ is an $S$-equivariant Lebesgue map.

Note that $S'$ is $S$-ergodic, as $\theta(S)$ is dense in $S'$.
Therefore we may use \S\ref{sec:gate} in the setting $Y=S'$ and view $\sigma$ as an algebraic representation of the $S$-space $S'$.
By Theorem~\ref{thm:gate} we obtain the algebraic gate of $S'$, $\phi:S'\to {\bf G}/{\bf H}(k)$ for some $k$-algebraic group 
${\bf H}\leq {\bf G}$.
Since there exists a ${\bf G}$-equivariant $k$-morphism ${\bf G}/{\bf H}\to{\bf V}$ and ${\bf V}$ has no ${\bf G}$-fixed point, 
we conclude that ${\bf H}\lneq {\bf G}$.

We now consider the right action of the group $S'$ on the space $S'$ and conclude by Theorem~\ref{yoneda} the existence of a 
continuous homomorphism $\rho':S'\to {\bf N}/{\bf H}(k)$, where ${\bf N}=N_{\bf G}({\bf H})$, 
such that $\phi$ is $S\times S'$-equivariant via $\rho\times\rho'$.

We claim that ${\bf H}=\{e\}$. Assume not.
Then, by the fact that ${\bf G}$ is simple we conclude that ${\bf N}\lneq {\bf G}$.
Composing $\phi$ with the map ${\bf G}/{\bf H}(k)\to {\bf G}/{\bf N}(k)$ we get a map $S'\to {\bf G}/{\bf N}(k)$ 
which is left $S$-equivariant and right $S'$-invariant. By its right $S'$-invariance, it is essentially constant. 
By the left $S$-equivariance, its essential image is $\rho(S)$-invariant, hence also ${\bf G}(k)$-invariant, 
as $\rho(S)$ is Zariski-dense in ${\bf G}$. This is absurd, thus the claim is proven.

It follows from the claim that ${\bf N}={\bf G}$ and thus $\rho'$ is a continuous homomorphism from $S'$ to ${\bf G}(k)$ 
and $\phi$ is a map from $S'$ to ${\bf G}(k)$ which is $S\times S'$-equivariant via 
$\rho\times \rho':S\times S' \to {\bf G}(k)\times {\bf G}(k)$. 
We are in a situation to apply Lemma~\ref{lem:extension} and obtain $g\in {\bf G}(k)$ such that $\rho=\inn(g)\circ\rho'\circ\theta$. 
We are done by setting $\bar{\rho}=\inn(g)\circ\rho'$.
\end{proof}

\section{Proof of Theorem~\ref{thm:lattice}} \label{section:mainthm}

Below we give a proof of the main theorem.
A priori, the valued field $k$ is assumed merely to be a complete field, but by the fact that $\rho(\Gamma)$ is unbounded 
we can assume that the absolute value on $k$ is non-trivial.
Further, by the countability of $\Gamma$, we may replace $k$ with a complete and separable (in the topological sense) 
subfield $k'$ such that $\rho(\Gamma)\subset {\bf G}(k')$. 
Therefore hereafter we assume that the given absolute value on the field $k$ is non-trivial 
and that $k$ is complete and separable as a metric space.
We start with the proof of the existence of a continuous homomorphism $\bar{\rho}:T\to {\bf G}(k)$;
the uniqueness will follow from the general Lemma~\ref{lem:uniquness} given below.

\medskip

Using Theorem~\ref{boundary}, for each $T_i$ one can choose a Lebesgue $T_i$-space $B_i$ which is amenable and metrically ergodic.
By Lemma~\ref{AMEproduct} the product $B=B_1\times \cdots\times B_n$ is amenable and metrically ergodic as a $T$-Lebesgue space, 
and Lemma~\ref{AMElattice} implies that $B$ is also amenable and metrically ergodic as a $\Gamma$-Lebesgue space.

We write $[n]=\{1,\dots,n\}$ and for a subset $I\subset [n]$ denote by $T_I=\prod_{i\in I} T_i$ the 
factor of the group $T$, and by $B_I=\prod_{i\in I} B_i$ the measurable factor of the Lebesgue space $B$.
By convention, $T_\emptyset=\{e\}$ and $B_\emptyset=\{*\}$. 

Consider pairs $(i,J)$ where $i\in [n]$, $J\subset [n]\setminus\{i\}$, and view $T_i\times B_J$
as a Lebesgue space with commuting actions of $\Gamma$ and $T_i$:
\[
	\Gamma \ni \gamma:(s,b)\mapsto (\gamma s, \gamma b),\qquad T\ni t:(s,b)\mapsto (s t^{-1},b).
\]
Lemma~\ref{T2B1ergodic} shows that the $\Gamma$-action on $T_i\times B_{[n]\setminus \{i\}}$, 
and hence the $\Gamma$-action on its quotient $T_i\times B_J$, is ergodic.
Therefore we can apply the considerations of \S\ref{sec:gate} to the group 
$S=\Gamma$ acting on the space $Y=T_i\times B_J$.
Theorem~\ref{thm:gate} provides the corresponding algebraic gate: 
a minimal $k$-algebraic subgroup $\mathbf{H}_{i,J}<\mathbf{G}$
for which there exists a $\Gamma$-map $\phi_{i,J}:T_i\times B_J\to \mathbf{G}/\mathbf{H}_{i,J}(k)$.

Theorem~\ref{AMEnontriv} ensures that the algebraic gate of $B=B_{[n]}$ is non-trivial:
there is a proper $k$-subgroup $\mathbf{H}\ne \mathbf{G}$ and a measurable
$\Gamma$-map $B\to \mathbf{G}/\mathbf{H}(k)$.
For any $i\in [n]$ we can apply Lemma~\ref{L:lifting} in the setting $S=T_i$, $X=B_i$ and 
$Z=B_{[n]\setminus \{i\}}$
to obtain a measurable $\Gamma$-map 
$T_i\times B_{[n]\setminus \{i\}}\to \mathbf{G}/\mathbf{H}(k)$.
It follows that $\mathbf{H}_{i,[n]\setminus \{i\}}$ can be conjugated into $\mathbf{H}$, and is therefore a proper subgroup of $\mathbf{G}$.

Let $J\subset [n]$ be a subset of minimal size $|J|$ for which there exists $i\in [n]\setminus J$
so that the algebraic gate of $T_i\times B_J$ is non-trivial:
\[
	\phi_{i,J}:T_i\times B_J\to \mathbf{G}/\mathbf{H}_{i,J}(k),\qquad \mathbf{H}_{i,J}\neq \mathbf{G}.
\]
Since $T_i$ acts on $T_i\times B_J$ by right translation on the first coordinate, commuting with the ergodic $\Gamma$-action,
Lemma~\ref{yoneda} with $S=\Gamma$, $Y=T_i\times B_J$ and $S'=T_i$ yields a continuous homomorphism 
\[
	\sigma:T_i \overto{}\mathbf{N}_\mathbf{G}(\mathbf{H}_{i,J})/\mathbf{H}_{i,J}(k)
\]
so that $\phi_{i,J}(\gamma x t,\gamma b)=\rho(\gamma) \phi_{i,J}(x,b) \sigma(t)$
for a.e. $(x,b)\in T_i\times B_J$.
Let $\mathbf{L}$ denote the smallest $k$-algebraic subgroup in $\mathbf{N}_\mathbf{G}(\mathbf{H}_{i,J})$
for which $\mathbf{L}/\mathbf{H}_{i,J}(k)$ contains the image $\sigma(T_i)$.

\begin{claim}
	$\mathbf{L}=\mathbf{G}$ and $\mathbf{H}_{i,J}=\{e\}$.
\end{claim}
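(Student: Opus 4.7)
I argue by contradiction, assuming $\mathbf{L}\lneq \mathbf{G}$. The broad strategy is to use the inclusion $\sigma(T_i)\subset \mathbf{L}/\mathbf{H}_{i,J}(k)$ to turn $\phi_{i,J}$ into an essentially $T_i$-invariant map after pushing forward to $\mathbf{G}/\mathbf{L}$, obtaining a $\Gamma$-algebraic representation of the smaller space $B_J$, and then to contradict either the minimality of $|J|$ (by lifting) or the Zariski density of $\rho(\Gamma)$ (in the boundary case $J=\emptyset$).

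First, since $\mathbf{H}_{i,J}\subset \mathbf{L}\subset \mathbf{N}_\mathbf{G}(\mathbf{H}_{i,J})$, there is a canonical $k$-$\mathbf{G}$-equivariant morphism $q:\mathbf{G}/\mathbf{H}_{i,J}\to \mathbf{G}/\mathbf{L}$. Under the identification of Proposition~\ref{aut-identification}, right multiplication by $\sigma(t)\in \mathbf{L}/\mathbf{H}_{i,J}(k)$ on $\mathbf{G}/\mathbf{H}_{i,J}(k)$ has the form $g\mathbf{H}_{i,J}\mapsto gn^{-1}\mathbf{H}_{i,J}$ with $n\in \mathbf{L}(k)$, so it becomes the identity after composition with $q$. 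Hence $q\circ \phi_{i,J}: T_i\times B_J\to \mathbf{G}/\mathbf{L}(k)$ is essentially invariant under the right $T_i$-action on its first coordinate, and descends to a $\Gamma$-equivariant Lebesgue map $\tilde\psi: B_J\to \mathbf{G}/\mathbf{L}(k)$.

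Suppose first $J\neq \emptyset$ and fix some $j\in J$. Writing $B_J=B_j\times B_{J\setminus\{j\}}$, Lemma~\ref{L:lifting} applied with $S=T_j$, $X=B_j$, $Z=B_{J\setminus\{j\}}$, $V=\mathbf{G}/\mathbf{L}(k)$ produces, for a.e.\ $b_j\in B_j$, a $\Gamma$-equivariant Lebesgue map $T_j\times B_{J\setminus\{j\}}\to \mathbf{G}/\mathbf{L}(k)$. Since $\mathbf{L}\lneq \mathbf{G}$, this algebraic representation cannot be constant (a constant value would be a $\rho(\Gamma)$-fixed point in $\mathbf{G}/\mathbf{L}$, and by Zariski density a $\mathbf{G}$-fixed point, forcing $\mathbf{L}=\mathbf{G}$); hence by Proposition~\ref{coset contraction} the algebraic gate $\mathbf{H}_{j,J\setminus\{j\}}$ is a proper $k$-subgroup of $\mathbf{G}$. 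But then $(j,J\setminus\{j\})$ witnesses a non-trivial gate with $|J\setminus\{j\}|<|J|$, contradicting the minimality of $|J|$. If instead $J=\emptyset$, then $\tilde\psi$ has a single essential value $v\in \mathbf{G}/\mathbf{L}(k)$ that is fixed by $\rho(\Gamma)$; Zariski density forces $\mathbf{G}$ to fix $v$, i.e.\ $\mathbf{L}=\mathbf{G}$, again a contradiction.

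Having proved $\mathbf{L}=\mathbf{G}$, we obtain $\mathbf{N}_\mathbf{G}(\mathbf{H}_{i,J})=\mathbf{G}$, so $\mathbf{H}_{i,J}$ is a normal $k$-subgroup of $\mathbf{G}$. Because $\mathbf{G}$ is connected, adjoint, and $k$-simple, its only normal $k$-subgroups are $\{e\}$ and $\mathbf{G}$; the latter is excluded by non-triviality of the gate, so $\mathbf{H}_{i,J}=\{e\}$. The only conceptually delicate point is the first one -- recognizing that enlarging the target from $\mathbf{H}_{i,J}$ to $\mathbf{L}$ trivializes the right $T_i$-action; once that is in hand, the minimality of $|J|$ together with the Zariski density fallback for $J=\emptyset$ closes the argument.
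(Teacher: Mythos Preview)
Your argument is correct and follows essentially the same route as the paper's proof: push $\phi_{i,J}$ forward to $\mathbf{G}/\mathbf{L}$ to kill the right $T_i$-action, descend to $B_J$, then lift via Lemma~\ref{L:lifting} to contradict the minimality of $|J|$, and finally use normality of $\mathbf{H}_{i,J}$ in the simple group $\mathbf{G}$. The only difference is that you treat the case $J=\emptyset$ explicitly via the Zariski-density fixed-point argument, whereas the paper dismisses it with the terse remark ``this is possible only if $J\ne\emptyset$''; your version is slightly more detailed but identical in substance.
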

Once the claim is proven, we get that there is a continuous homomorphism $\sigma:T_i\to \mathbf{G}(k)$
with Zariski dense image, and a measurable map $\phi:T_i\times B_J\to \mathbf{G}(k)$ so that
\[
	\phi(\gamma x t,\gamma b)=\rho(\gamma) \phi(x,b) \sigma(t)
\]
for a.e. $(x,b)\in T_i\times B_J$.
\begin{proof}
	Assume $\mathbf{L}\ne \mathbf{G}$. 
	The $T_i$-equivariance property of $\phi_{i,J}$ implies that this map descends to a measurable $\Gamma$-map
	\[
		B_J\ \overto{}\ \mathbf{G}/\mathbf{L}(k).
	\]
	Note that this is possible only if $J\ne\emptyset$. Pick $j\in J$ and use Lemma~\ref{L:lifting} to get a measurable
	$\Gamma$-map $\psi:T_j\times B_{J\setminus\{j\}}\to \mathbf{G}/\mathbf{L}(k)$.
	Under the assumption that $\mathbf{L}$ is a proper subgroup in $\mathbf{G}$, 
	the gate of $T_j\times B_{J\setminus\{j\}}$ is non-trivial, contradicting the minimality property of $J\subset [n]$.
	
	Therefore $\mathbf{L}=\mathbf{G}$. This implies that $\mathbf{N}_\mathbf{G}(\mathbf{H}_{i,J})=\mathbf{G}$,
	meaning that $\mathbf{H}_{i,J}$ is normal in the simple group $\mathbf{G}$. 
	Since $\mathbf{H}_{i,J}$ is known to be a proper subgroup, we have $\mathbf{H}=\{e\}$.
	Thus the previously constructed continuous homomorphism $\sigma$ and the measurable map $\phi=\phi_{i,J}$ 
	range into $\mathbf{G}(k)$.
	Finally, $\sigma(T_i)$ is Zariski dense in $\mathbf{G}$, because its Zariski closure is $\mathbf{L}$. 
\end{proof}

\begin{claim}
	The $\Gamma$-map $\phi:T_i\times B_J\to \mathbf{G}(k)$ descends to a $\Gamma$-map $T_i\to \mathbf{G}(k)$.
\end{claim}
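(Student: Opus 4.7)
The plan is to reduce the claim to the essential constancy of a certain derived function $\alpha:B_J\to\mathbf{G}(k)$. First, the map $(x,b)\mapsto\phi(x,b)\sigma(x)^{-1}$ is right-$T_i$-invariant in the first variable --- this is the content of $\phi(xt,b)=\phi(x,b)\sigma(t)$, obtained by setting $\gamma=e$ in the defining equation --- so by Fubini it descends to a measurable map $\alpha:B_J\to\mathbf{G}(k)$ satisfying
\[
	\phi(x,b)=\alpha(b)\,\sigma(x).
\]
Substituting this into the full $\Gamma$-equivariance of $\phi$ and using that $\sigma$ is a homomorphism yields the twisted equivariance
\[
	\alpha(\gamma b)=\rho(\gamma)\,\alpha(b)\,\sigma(\pi_i(\gamma))^{-1}\qquad (\gamma\in\Gamma,\, b\in B_J).
\]
The claim is thus equivalent to $\alpha$ being essentially constant.

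To force this, I would form the ``commutator'' map $A:B_J\times B_J\to\mathbf{G}(k)$ defined by $A(b_1,b_2)=\alpha(b_1)\alpha(b_2)^{-1}$. A short computation using the twisted equivariance cancels the $\sigma(\pi_i(\gamma))^{\pm 1}$ factors and produces
\[
	A(\gamma b_1,\gamma b_2)=\rho(\gamma)\,A(b_1,b_2)\,\rho(\gamma)^{-1}.
\]
Hence $A$ is an algebraic representation of the diagonal $\Gamma$-space $B_J\times B_J$ with target $\mathbf{G}$ regarded as a $k$-$\mathbf{G}$-variety via conjugation. Since $\mathbf{G}$ is adjoint, the unique $\mathbf{G}$-fixed point of $\mathbf{G}$ under conjugation is $e$, so the essential constancy of $\alpha$ is equivalent to $A\equiv e$ almost everywhere.

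The proof is then completed by the key sub-claim: the algebraic gate of the diagonal $\Gamma$-space $B_J\times B_J$, in the original $(\mathbf{G},\rho)$-setting, is trivial. Granting this, the initial property of the gate forces every algebraic representation of $B_J\times B_J$ to be essentially constant at a $\mathbf{G}$-fixed point of its target; applied to $A$ this yields $A\equiv e$.

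The main obstacle is this sub-claim, and the intended strategy mirrors the argument establishing $\mathbf{L}=\mathbf{G}$ just above. Given a hypothetical non-trivial $\Gamma$-map $\psi:B_J\times B_J\to\mathbf{G}/\mathbf{H}(k)$ with $\mathbf{H}\lneq\mathbf{G}$, iterated applications of Lemma~\ref{L:lifting} replace each $B_j$-factor of one copy of $B_J$ by the corresponding $T_j$, producing a $\Gamma$-equivariant map $T_J\times B_J\to\mathbf{G}/\mathbf{H}(k)$. The commuting right-translation action of the $T_j$ then lets Theorem~\ref{yoneda} be invoked, and a Zariski-closure analysis analogous to that in the preceding step should reduce the situation to a pair $(i',J')$ with $|J'|<|J|$ and non-trivial gate, contradicting the minimality of $J$.
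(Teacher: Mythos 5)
Your first reduction is correct and is a genuinely different framing from the paper's: writing $\phi(x,b)=\alpha(b)\sigma(x)$ and passing to the conjugation-equivariant map $A(b_1,b_2)=\alpha(b_1)\alpha(b_2)^{-1}$ does reduce the claim to $A\equiv e$, and this would indeed follow from triviality of the gate of $B_J\times B_J$ since $\mathbf{G}$ is adjoint. The problem is that this sub-claim is exactly where the difficulty lives, and your sketch for it has a gap that the available tools do not close. To speak of the gate of $B_J\times B_J$ at all (Theorem~\ref{thm:gate} and the initiality you invoke) you need the diagonal $\Gamma$-action on $B_J\times B_J$ to be \emph{ergodic}. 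This is a double-ergodicity property of the boundaries: the $B_i$ are only assumed amenable and metrically ergodic, ergodicity of $B_i\times B_i$ does not follow, and the paper explicitly declines to use the stronger ``doubly metrically ergodic'' boundaries of Kaimanovich (see the remark after Theorem~\ref{boundary}). Without ergodicity, $A$ need not land in a single $\mathbf{G}$-orbit and the argument stops.

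Even granting ergodicity, the route to a contradiction with the minimality of $J$ does not go through as described. Lifting the factors of one copy of $B_J$ via Lemma~\ref{L:lifting} produces spaces such as $T_j\times B_{J\setminus\{j\}}\times B_J$ or $T_J\times B_J$, which contain \emph{repeated} boundary factors; these are not among the spaces $T_{i'}\times B_{J'}$ governed by the minimality of $J$, their $\Gamma$-ergodicity is again not covered by Lemma~\ref{T2B1ergodic}, and for $|J|\ge 2$ the left $\Gamma$-action on $T_J$ itself need not be ergodic (only the projections to the individual factors are assumed dense). So the ``analogous Zariski-closure analysis'' has nothing to latch onto. The paper circumvents both issues by treating one $j\in J$ at a time: it lifts only $B_j$, runs the gate formalism for the product group $\Gamma\times T_i$ mapping into $\mathbf{G}\times\mathbf{G}$ via $\rho\times\sigma$ on $Y=T_i\times T_j\times B_K$ (whose ergodicity \emph{does} follow from Lemma~\ref{T2B1ergodic}), identifies the gate of the graph-type representation $(\hat\phi,\sigma)\Delta_\mathbf{G}$ as $\mathbf{G}\times\mathbf{G}/\Delta_\mathbf{G}$ using the minimality of $J$, and then uses that $\Delta_\mathbf{G}$ is self-normalizing to force the commuting right $T_j$-action to be trivial via Theorem~\ref{yoneda}. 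Some version of this two-variable device (or genuinely doubly ergodic boundaries) is needed to make your sub-claim provable.
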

\begin{proof}
	Let us show that for every $j\in J$ for a.e. $t\in T_j$ one has $\phi(x,b)=\phi(x,tb)$ a.e. on $T_i\times B_J$.
	Since $T_J$ acts ergodically on $B_J$ the claim would follow.
	The case of $J=\emptyset$ being trivial, we assume $J$ is non-empty, fix $j\in J$, and set $K=J\setminus \{j\}$.
	Choose a.e. $b\in B_J$ to create a $\Gamma$-equivariant lift by Lemma~\ref{L:lifting}
	\[
		\hat\phi:T_i\times T_j\times B_K\ \overto{}\ \mathbf{G}(k).
	\]
	Consider the setting of \S\ref{sec:gate} 
	for the group $S=\Gamma\times T_i$, the homomorphism
	\[
		\rho\times\sigma:\Gamma\times T_i\overto{} \mathbf{G}(k)\times \mathbf{G}(k),
	\]
	and the Lebesgue $S$-space $Y=T_i\times T_j\times B_K$ with the action:
	\[
		(\gamma,t):(x,y,b)\mapsto (\gamma x t^{-1},\gamma y, \gamma b).
	\]
	Note that the $S$-action on $Y$ is ergodic, because the $\Gamma$-action on $T_j\times B_K$
	is ergodic by Lemma~\ref{T2B1ergodic}.
	Denote $\Delta_\mathbf{G}<\mathbf{G}\times \mathbf{G}$ the diagonal subgroup.
	The measurable map 
	\[
		\psi:Y\to \mathbf{G}\times \mathbf{G}/\Delta_\mathbf{G}(k),
		\qquad \psi(x,y,b)=(\hat\phi(x,y,b),\sigma(x))\Delta_\mathbf{G}(k)
	\]
	is $\rho\times \sigma$-equivariant. Thus the gate of $Y$ is given by  
	\begin{itemize}
		\item a $k$-variety 
		$\mathbf{V}=\mathbf{G}\times \mathbf{G}/\mathbf{D}$ 
		where $\mathbf{D}$ is a $k$-algebraic subgroup $\mathbf{D}<\Delta_\mathbf{G}$,
		\item 
		and a measurable $\Gamma\times T_i$-map $\hat{\psi}:Y\to \mathbf{V}(k)$ for which
		$\psi=\pi\circ \hat{\psi}$ where $\pi:\mathbf{V}(k)\to \mathbf{G}\times \mathbf{G}/\Delta_\mathbf{G}(k)$ is the projection.
	\end{itemize}
	In fact, we must have $\mathbf{D}=\Delta_\mathbf{G}$.
	Indeed, otherwise under the projection to the first factor
	$\pi_1:\mathbf{G}\times \mathbf{G}\to \mathbf{G}$, the image $\mathbf{M}=\pi_1(\mathbf{D})$ would be
	a proper subgroup of $\mathbf{G}$, and the map 
	\[
		T_i\times T_j\times B_K\ \overto{\hat{\psi}}\ \mathbf{V}(k)\ \overto{\pi_1}\ \mathbf{G}/\mathbf{M}(k)
	\]
	would factor through a measurable $\Gamma$-map $T_j\times B_K\to \mathbf{G}/\mathbf{M}(k)$,
	contrary to our assumption of minimality for $J\subset [n]$.
	Thus $\mathbf{D}=\Delta_\mathbf{G}$ and $\hat\psi=\psi$.

	Consider the action of $S'=T_j$ on $Y=T_i\times T_j\times B_K$ by
	$s:(x,y,b)\mapsto (x,ys^{-1},b)$. It commutes with the $S=\Gamma\times T_i$-action.
	We can apply Lemma~\ref{yoneda} to obtain a continuous homomorphism 
	$\tau:T_j\to N_{\mathbf{G}\times \mathbf{G}}(\Delta_\mathbf{G})/\Delta_\mathbf{G}(k)$
	for which $\psi=\hat\psi$ is $T_j$-equivariant. 
	But as  $\mathbf{G}$ is center-free, 
	$\Delta_\mathbf{G}$ is its own normalizer in $\mathbf{G}\times \mathbf{G}$.
	So $\tau$ is trivial, and therefore $\psi$ is $T_j$-invariant.
	Since $\psi$ was a lift of $\phi:T_i\times B_J\to \mathbf{G}(k)$ for a.e. $b\in B_J$,
	we proved that $\phi$ is essentially $T_j$-invariant, and this is true for all $j\in J$.
	This completes the proof of the claim. 
\end{proof}
At this point we have a measurable map $\phi:T_i\to \mathbf{G}(k)$ and a continuous homomorphism 
$\sigma:T_i\to \mathbf{G}(k)$ so that $\phi(\gamma t)=\rho(\gamma)\phi(t)$ and $\phi(tt')=\phi(t)\sigma(t')$.
Applying Lemma~\ref{lem:extension} with $\theta:\Gamma\overto{} T\overto{\pi_i} T_i$ 
we deduce that for some $g\in \mathbf{G}(k)$ the homomorphism
$\bar\rho_i=\inn(g)\circ \sigma:T_i\overto{} \mathbf{G}$
extends $\rho:\Gamma\to \mathbf{G}(k)$ in the sense that $\bar{\rho}_i\circ \theta=\rho$.
This completes the proof of the existence of the continuous extension homomorphism $\bar\rho:T\to \mathbf{G}(k)$.

The uniqueness part follows from the following general lemma, by which we conclude our proof.

\begin{lemma} \label{lem:uniquness}
Let $S$ be a lcsc group and $\Gamma<S$ a lattice.
Let $k$ be a complete valued field and ${\bf G}$ be a connected $k$-simple adjoint algebraic group.
Let $\rho_1,\rho_2:S \to {\bf G}(k)$ be continuous homomorphisms, and assume that $\rho_1$ has an unbounded and Zariski dense image.
If $\rho_1|_\Gamma=\rho_2|_\Gamma$ then $\rho_1=\rho_2$.
\end{lemma}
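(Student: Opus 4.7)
The plan is to consider the product map $\Psi=(\rho_1,\rho_2):S\to (\mathbf{G}\times \mathbf{G})(k)$ and study the Zariski closure $\mathbf{L}$ of $\Psi(S)$ in $\mathbf{G}\times \mathbf{G}$. The hypothesis $\rho_1|_\Gamma=\rho_2|_\Gamma$ gives $\Psi(\Gamma)\subset \Delta$, where $\Delta\subset \mathbf{G}\times\mathbf{G}$ denotes the diagonal; the hypotheses on $\rho_1$ yield $p_1(\mathbf{L})=\mathbf{G}$ and make $\Psi(S)$ unbounded in $(\mathbf{G}\times\mathbf{G})(k)$. The goal is to show $\mathbf{L}=\Delta$, which is equivalent to $\rho_1=\rho_2$.

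First I would upgrade the Zariski density from $\rho_1(S)$ to $\rho_1(\Gamma)$. Let $\mathbf{M}$ be the Zariski closure of $\rho_1(\Gamma)$; then the continuous map $s\mapsto \rho_1(s)\mathbf{M}$ descends to a continuous $S$-equivariant map $S/\Gamma\to (\mathbf{G}/\mathbf{M})(k)$. Pushing forward the $S$-invariant probability measure produces a $\rho_1(S)$-invariant probability measure on $(\mathbf{G}/\mathbf{M})(k)$; by a Furstenberg/Borel-density argument for an unbounded Zariski-dense subgroup of the $k$-simple adjoint group $\mathbf{G}(k)$ (available in this complete-valued-field setting via the tools of \cite{BDL}), this measure must be supported on $\mathbf{G}$-fixed points of $\mathbf{G}/\mathbf{M}$, which exist only when $\mathbf{M}=\mathbf{G}$. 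Hence $\rho_1(\Gamma)$ is Zariski dense in $\mathbf{G}$, $\Psi(\Gamma)$ is Zariski dense in $\Delta$, and $\mathbf{L}\supset \Delta$.

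Next I would carry out a short Goursat-type dichotomy: any Zariski closed subgroup $\mathbf{L}\leq \mathbf{G}\times \mathbf{G}$ containing $\Delta$ is either $\Delta$ itself or the whole group. Setting $\mathbf{N}=\{t\in \mathbf{G}:(e,t)\in\mathbf{L}\}$, the identity $(h,h)(e,t)(h,h)^{-1}=(e,hth^{-1})$ combined with $\Delta\subset\mathbf{L}$ shows $\mathbf{N}$ is a $\mathbf{G}$-normal $k$-subgroup, so by $k$-simplicity $\mathbf{N}\in\{\{e\},\mathbf{G}\}$. If $\mathbf{N}=\{e\}$, then for each $(g_1,g_2)\in\mathbf{L}$ the product $(g_1,g_1)^{-1}(g_1,g_2)=(e,g_1^{-1}g_2)$ lies in $\mathbf{L}$, forcing $g_1=g_2$, so $\mathbf{L}=\Delta$. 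Otherwise $\{e\}\times \mathbf{G}\subset\mathbf{L}$, which together with $\Delta$ generates $\mathbf{G}\times \mathbf{G}$.

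To finish I would rule out $\mathbf{L}=\mathbf{G}\times \mathbf{G}$ by a second pushforward argument. The continuous map $\tilde\chi:S/\Gamma\to ((\mathbf{G}\times \mathbf{G})/\Delta)(k)\cong \mathbf{G}(k)$, $s\Gamma\mapsto \rho_1(s)\rho_2(s)^{-1}$, is $S$-equivariant for the two-sided action $(g_1,g_2)\cdot h=g_1 h g_2^{-1}$; pushing forward produces a $\Psi(S)$-invariant probability measure $\mu$ on $\mathbf{G}(k)$. Furstenberg's lemma gives that the measure-stabilizer in $(\mathbf{G}\times \mathbf{G})(k)$ is Zariski closed, hence contains $\mathbf{L}=\mathbf{G}\times \mathbf{G}$; so $\mu$ is $(\mathbf{G}\times \mathbf{G})(k)$-invariant, in particular a left-$\mathbf{G}(k)$-invariant probability measure, which forces $\mathbf{G}(k)$ to be compact and contradicts the unboundedness of $\rho_1(S)$. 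The main delicate point is to invoke these Furstenberg-type statements (Borel density and the Zariski-closedness of measure stabilizers) in the generality of a complete valued field $k$ rather than a local one; this is precisely the framework of \cite{BDL} employed throughout the present paper, so the required tools are available.
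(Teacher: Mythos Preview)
Your approach is essentially the paper's: both establish Zariski density of $\rho_1(\Gamma)$ by pushing the invariant probability on $S/\Gamma$ to $(\mathbf{G}/\mathbf{M})(k)$ and invoking \cite{BDL}, then run the Goursat dichotomy for $\mathbf{L}\supset\Delta$, and finally rule out $\mathbf{L}=\mathbf{G}\times\mathbf{G}$ by a second pushforward to $((\mathbf{G}\times\mathbf{G})/\Delta)(k)$. The only imprecision is in your formulation of the last step: the assertion that ``the measure-stabilizer is Zariski closed'' is not the form in which the tool appears in \cite{BDL}, and is not a standard Furstenberg-type lemma. What \cite{BDL}*{Proposition~1.9} actually yields (and what the paper uses) is a normal $k$-subgroup $\mathbf{H}\lhd\mathbf{G}\times\mathbf{G}$ such that $\overline{\Psi(S)}$ has precompact image modulo $\mathbf{H}$ and any $\overline{\Psi(S)}$-invariant finite measure is supported on the $\mathbf{H}$-fixed locus; one then checks directly that none of the nontrivial normal $k$-subgroups $\mathbf{G}\times\{e\}$, $\{e\}\times\mathbf{G}$, $\mathbf{G}\times\mathbf{G}$ has a fixed point in $(\mathbf{G}\times\mathbf{G})/\Delta$, forcing $\mathbf{H}=\{e\}$ and hence $\Psi(S)$ bounded---the same contradiction you reach.
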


\begin{proof}
As before, without loss of generality we assume that the absolute value on $k$ is non-trivial and as a metric space $k$ is complete and separable.
We will prove the lemma by contradiction, assuming there exists $s_0\in S$ with $\rho_1(s_0)\neq \rho_2(s_0)$. 

We will first show that $\rho_1(\Gamma)=\rho_2(\Gamma)$ is Zariski dense in $\mathbf{G}$.
We denote by $L_1$ the closure of $\rho_1(S)$ in $\mathbf{G}(k)$.
By \cite{BDL}*{Proposition 1.9}
there exists a $k$-subgroup $\mathbf{H}_1<\mathbf{G}$ which is normalized by $L_1$ such that $L_1$ has a precompact image in 
$N_\mathbf{G}(\mathbf{H}_1)/\mathbf{H}_1(k)$ and such that for every $k$-$\mathbf{G}$-variety $\mathbf{V}$,
any $L_1$-invariant finite measure is supported on the subveriety of $\mathbf{H}_1$-fixed points, $\mathbf{V}^{\mathbf{H}_1}\cap \mathbf{V}(k)$.
As $\rho_1$ has a Zariski dense image, $L_1$ is Zariski dense in $\mathbf{G}$ and we get that $\mathbf{H}_1$ is normal in $\mathbf{G}$.
As $\mathbf{G}$ is $k$-simple, we conclude that either $\mathbf{H}_1=\{e\}$ or $\mathbf{H}_1=\mathbf{G}$.
The first option is ruled out, as $\rho_1(S)$ is unbounded while $L_1$ has a precompact image in 
$N_\mathbf{G}(\mathbf{H}_1)/\mathbf{H}_1(k)$.
Thus $\mathbf{H}_1=\mathbf{G}$.
We denote by $\mathbf{G}_1$ the Zariski closure of $\rho_1(\Gamma)$ and let $\mathbf{V}_1=\mathbf{G}/\mathbf{G}_1$.
The map $\rho_1$ gives rise to an obvious map $S/\Gamma \to \mathbf{V}_1(k)$.
By pushing the $S$-invariant measure of $S/\Gamma$ we obtain an $L_1$-invariant finite measure on $\mathbf{V}_1(k)$.
It follows that this measure is supported on the subveriety of $\mathbf{G}$-fixed points.
In particular, there exists a $\mathbf{G}$-fixed point in $\mathbf{V}_1=\mathbf{G}/\mathbf{G}_1$
and we conclude that $\mathbf{G}_1=\mathbf{G}$.
Thus, indeed, $\rho_1(\Gamma)$ is Zariski dense in $\mathbf{G}$.

Next we consider the homomorphism $R=\rho_1\times \rho_2:S\to \mathbf{G}(k)\times \mathbf{G}(k)$,
we let $L$ be the closure of $R(S)$ and let ${\bf L}<{\bf G}\times {\bf G}$ be the Zariski closure of $L$.
We note that, by the discussion above, the Zariski closure of $R(\Gamma)$ coincides with the diagonal group $\Delta_{\bf G}<{\bf G}\times {\bf G}$.
In particular, $\Delta_{\bf G}$ is contained in ${\bf L}$.
We argue to show that ${\bf L}={\bf G}\times {\bf G}$.
The $k$-subgroup ${\bf L}\cap ({\bf G}\times \{e\})$ is non-trivial, as the element $\rho_1(s_0)\rho_2(s_0)^{-1}$ is contained in its group of $k$-points, since both $(\rho_1(s_0),\rho_2(s_0))$ and $(\rho_2(s_0),\rho_2(s_0))$ are elements of ${\bf L}(k)$.
As this subgroup is normalized by ${\bf L}$, hence by $\Delta_{\bf G}$, we get that this is a non-trivial normal $k$-subgroup of ${\bf G}\times \{e\}$.
As ${\bf G}\times \{e\}$ is $k$-simple, we conclude that ${\bf L}\cap ({\bf G}\times \{e\})={\bf G}\times \{e\}$,
that is ${\bf G}\times \{e\}<{\bf L}$.
As ${\bf G}\times {\bf G}$ is generated by ${\bf G}\times \{e\}$ and $\Delta_{\bf G}$, we conclude that indeed ${\bf L}={\bf G}\times {\bf G}$.

We use \cite{BDL}*{Proposition 1.9} again, this time for the $k$-group ${\bf G}\times {\bf G}$ and the Zariski dense closed subgroup $L<({\bf G}\times {\bf G})(k)$,
and conclude having a normal $k$-subgroup ${\bf H}<{\bf G}\times {\bf G}$ such that $L$ has a precompact image in 
$({\bf G}\times {\bf G})/\mathbf{H}(k)$ and such that for every $k$-$\mathbf{G}$-variety $\mathbf{V}$,
any $L$-invariant finite measure is supported on the subveriety of $\mathbf{H}$-fixed points, $\mathbf{V}^{\mathbf{H}}\cap \mathbf{V}(k)$.
We	consider ${\bf V}=({\bf G}\times {\bf G})/\Delta_{\bf G}$.
The map $R$ gives rise to an obvious map $S/\Gamma \to \mathbf{V}(k)$.
By pushing the $S$-invariant measure of $S/\Gamma$ we obtain an $L$-invariant finite measure on $\mathbf{V}(k)$.
It follows that this measure is supported on the subveriety of $\mathbf{H}$-fixed points.
Since the non-trivial normal $k$-subgroups of ${\bf G}\times {\bf G}$, namely ${\bf G}\times \{e\}$, $\{e\}\times {\bf G}$
and ${\bf G}\times {\bf G}$ itself, have no fixed points in ${\bf V}$, we conclude that ${\bf H}=\{e\}$.
It follows that $L$ is compact in ${\bf G}\times {\bf G}$, which contradicts the assumption that $\rho_1$ has an unbounded image.
\end{proof}


\section{Linearity criteria} \label{section:nonlinearity}

This section is devoted to the proof of the following theorem.

\begin{theorem} \label{lem:nonlinearity}
For any countable group $\Lambda$ and a commensurated subgroup $\Gamma<\Lambda$ the following properties are equivalent.
\begin{enumerate}

\item For every field $K$, integer $d$ and a group homomorphism $\phi:\Lambda \to \GL_d(K)$, $\phi(\Gamma)$ is solvable-by-locally finite.

\item For every finite index subgroup $\Lambda'<\Lambda$, complete field with an absolute value $k$, connected adjoint $k$-simple algebraic group ${\bf G}$ and Zariski dense group homomorphism $\rho:\Lambda'\to {\bf G}(k)$, $\rho(\Gamma\cap\Lambda')$ is
bounded in ${\bf G}(k)$.
\end{enumerate}
Furthermore, if $\Lambda$ is assumed to be finitely generated, the class of fields considered in (2) could be taken to be the class of local fields.
\end{theorem}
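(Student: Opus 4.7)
The plan is to establish the equivalence by treating each direction separately, with the real content residing in (2)$\Rightarrow$(1), where a specialization argument is needed to convert an abstract linear representation into the analytic setting of hypothesis~(2).

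\emph{Sketch of (2) $\Rightarrow$ (1).} Given $\phi:\Lambda\to \GL_d(K)$, pass first to $\Lambda_1:=\phi^{-1}(\mathbf{H}^\circ(K))$, where $\mathbf{H}$ is the Zariski closure of $\phi(\Lambda)$ in $(\GL_d)_K$, so that $\phi(\Lambda_1)$ is Zariski dense in the connected algebraic group $\mathbf{H}^\circ$. Decompose $\mathbf{H}^\circ$ as an extension of a semisimple group by its solvable radical $\mathbf{R}$; the image of $\Gamma\cap \Lambda_1$ in $\mathbf{R}$ is automatically solvable, so it suffices to show its image in each adjoint $K$-simple factor $\mathbf{G}_i$ of $(\mathbf{H}^\circ/\mathbf{R})^{\mathrm{ad}}$ is locally finite. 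If, for contradiction, the resulting Zariski dense $\psi_i:\Lambda_1\to \mathbf{G}_i(K)$ has $\psi_i(\Gamma\cap\Lambda_1)$ not locally finite, pick a finitely generated $F\subset \Gamma\cap\Lambda_1$ with $\psi_i(F)$ infinite and invoke a Tits/Breuillard-type specialization: find an absolute value on the finitely generated subfield of $K$ containing the matrix entries of $\psi_i(F)$ under which $\psi_i(F)$ is unbounded in the completion; extend this absolute value to the subfield $K_1\subset K$ generated by matrix entries of $\psi_i(\Lambda_1)$, and complete to some $(k,|\cdot|)$. Since Zariski density is preserved under field extensions, $\psi_i:\Lambda_1\to \mathbf{G}_i(k)$ is Zariski dense while $\psi_i(\Gamma\cap\Lambda_1)$ is unbounded, contradicting (2).

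\emph{Sketch of (1) $\Rightarrow$ (2).} Assume (1), and suppose $\rho:\Lambda'\to \mathbf{G}(k)$ is Zariski dense with $\rho(\Gamma\cap\Lambda')$ unbounded. Replacing $\Lambda'$ with its normal core in $\Lambda$ preserves finite index, Zariski density (as $\mathbf{G}$ is connected), and unboundedness of the $\Gamma$-image (as finite-index subgroups of unbounded sets are unbounded). Form the induced representation $\phi:=\mathrm{Ind}_{\Lambda'}^{\Lambda}\rho:\Lambda\to \GL_N(k)$ (via $\mathbf{G}\hookrightarrow \GL_{\dim \mathbf{G}}$); applying (1) to $\phi$, restricting to $\Gamma\cap\Lambda'$, and projecting to a single diagonal block yields that $\rho(\Gamma\cap\Lambda')$ is itself solvable-by-locally finite. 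Since $\Gamma\cap\Lambda'$ is commensurated by $\Lambda'$, the identity component of the Zariski closure of $\rho(\Gamma\cap\Lambda')$ is normalized by the Zariski dense set $\rho(\Lambda')$, hence by all of $\mathbf{G}$; $k$-simplicity forces it to be either $\{e\}$ (giving a finite, bounded subgroup, contradiction) or all of $\mathbf{G}$. In the second case, applying the same normality argument to the Zariski closure of the solvable normal subgroup shows this closure is trivial, so $\rho(\Gamma\cap\Lambda')$ is locally finite. A final appeal to \cite{BDL} (locally finite subgroups of $\mathbf{G}(k)$ are bounded in the Bruhat--Tits bornology) produces the contradiction.

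\emph{Main obstacle and the finitely generated refinement.} The principal technical step is the specialization in (2) $\Rightarrow$ (1): extending an absolute value detected on a finitely generated witness $F$ to the entire (possibly infinitely generated) field $K_1$ in a way that preserves Zariski density of $\psi_i(\Lambda_1)$. When $\Lambda$ is finitely generated, the image $\phi(\Lambda_1)$ and hence $K_1$ are finitely generated, and the specialization can be arranged to land in a local field by the classical result that finitely generated non-virtually-solvable linear groups admit unbounded representations into local-field algebraic groups. The strengthening of the conclusion from ``solvable-by-locally finite'' to ``solvable-by-finite'' then follows by observing that under finite generation of $\Lambda$, combined with the commensuration of $\Gamma$, the locally finite quotient that arises is forced to be finite.
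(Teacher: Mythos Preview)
Your overall strategy matches the paper's: for $(2)\Rightarrow(1)$ both you and the paper pass to the connected component of the Zariski closure, quotient by the solvable radical, project to a $K$-simple adjoint factor, and then invoke the Breuillard--Gelander/Tits specialization (the paper's Lemma~\ref{lem:BG}) to manufacture a complete valued field $k$ with unbounded image. For $(1)\Rightarrow(2)$ both arguments induce from $\Lambda'$ to $\Lambda$ and exploit that $\Gamma\cap\Lambda'$ is commensurated to force its image to be Zariski dense in $\mathbf{G}$; the paper's version (via Lemmas~\ref{lem:Zdense} and~\ref{lem:amen}) runs ``Zariski dense $+$ unbounded $\Rightarrow$ not amenable $\Rightarrow$ not solvable-by-locally finite'', while you run the contrapositive ``solvable-by-locally finite $+$ Zariski dense $\Rightarrow$ locally finite $\Rightarrow$ bounded''. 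These are equivalent.

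Two points need correction. First, in your $(1)\Rightarrow(2)$ the assertion ``locally finite subgroups of $\mathbf{G}(k)$ are bounded'' is not a general fact from \cite{BDL} and is not what you actually need; what \cite{BDL} gives (this is the paper's Lemma~\ref{lem:amen}) is that an \emph{amenable} subgroup with \emph{Zariski dense} image is bounded. You have already arranged Zariski density, and locally finite groups are amenable, so the conclusion follows---but the hypothesis of Zariski density is essential and should be stated.

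Second, and more substantively, you have misread the ``Furthermore'' clause. The theorem asserts only that when $\Lambda$ is finitely generated one may restrict the fields $k$ in condition~(2) to local fields; it does \emph{not} claim that $\phi(\Gamma)$ becomes solvable-by-finite. Your justification (``under finite generation of $\Lambda$, combined with the commensuration of $\Gamma$, the locally finite quotient that arises is forced to be finite'') is incorrect: a commensurated subgroup of a finitely generated group need not be finitely generated, so $\phi(\Gamma)$ need not be. Drop that sentence; the local-field refinement is exactly your observation that $K_1$ is finitely generated when $\Lambda$ is, so the specialization lands in a local field.
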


Taking $\Gamma=\Lambda$ we get the following corollary,
which is an extension of the linearity criterion \cite[Theorem~A.1]{BCL}, not adhering to a finite generation assumption.

\begin{cor} \label{cor:nonlinearity}
For any countable group $\Gamma$ the following properties are equivalent.
\begin{enumerate}

\item For every field $K$, integer $d$ and a group homomorphism $\phi:\Gamma \to \GL_d(K)$, $\phi(\Gamma)$ is solvable-by-locally finite.

\item For every finite index subgroup $\Gamma'<\Gamma$, complete field with an absolute value $k$, connected adjoint $k$-simple algebraic group ${\bf G}$ and Zariski dense group homomorphism $\rho:\Gamma'\to {\bf G}(k)$, $\rho(\Gamma')$ is
bounded in ${\bf G}(k)$.
\end{enumerate}
Furthermore, if $\Gamma$ is assumed to be finitely generated, the class of fields considered in (2) could be taken to be the class of local fields.
\end{cor}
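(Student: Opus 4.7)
The plan is to obtain Corollary~\ref{cor:nonlinearity} as an immediate specialization of Theorem~\ref{lem:nonlinearity}. The first step is to observe that any countable group $\Gamma$ is trivially commensurated inside itself: for every $g\in\Gamma$, the intersection $\Gamma\cap g\Gamma g^{-1}=\Gamma$ has finite index in both $\Gamma$ and $g\Gamma g^{-1}$. Thus the pair $(\Lambda,\Gamma):=(\Gamma,\Gamma)$ satisfies the standing hypotheses of Theorem~\ref{lem:nonlinearity}, and we may invoke it.

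The second step is to check that, under the substitution $\Lambda=\Gamma$, the two conditions of Theorem~\ref{lem:nonlinearity} reduce verbatim to the two conditions of the corollary. Condition (1) requires $\phi(\Gamma)$ to be solvable-by-locally finite for every homomorphism $\phi\colon\Lambda\to\GL_d(K)$; after the substitution this is precisely condition (1) of the corollary. For condition (2), whenever $\Lambda'$ is a finite-index subgroup of $\Lambda=\Gamma$, the intersection $\Gamma\cap\Lambda'$ equals $\Lambda'$, so the boundedness requirement on $\rho(\Gamma\cap\Lambda')$ becomes the boundedness requirement on $\rho(\Gamma')$ appearing in condition (2) of the corollary (writing $\Gamma'$ for $\Lambda'$).

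Finally, the "furthermore" clause under finite generation transfers without modification, because the substitution $\Lambda=\Gamma$ does not alter the finite generation hypothesis, and the class of fields allowed in the target of $\rho$ is unchanged by the reduction.

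There is essentially no obstacle to overcome: Theorem~\ref{lem:nonlinearity} is formulated in sufficient generality that Corollary~\ref{cor:nonlinearity} becomes its special case where the commensurated subgroup is the whole ambient group, and the proof amounts to recording this specialization together with the two straightforward identifications above.
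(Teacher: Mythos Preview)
Your proof is correct and follows exactly the paper's approach: the corollary is stated immediately after Theorem~\ref{lem:nonlinearity} with the one-line remark ``Taking $\Gamma=\Lambda$ we get the following corollary,'' and your argument is simply a careful unpacking of that specialization.
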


Before proving Theorem~\ref{lem:nonlinearity} let us make some preliminary lemmas.
The following important lemma is essentially due to Breuillard and Gelander,
and it is an elaboration of an older lemma of Tits.

\begin{lemma} \label{lem:BG}
Let $K$ be a countable field, $R<K$ a finitely generated subring and 
$I\subset R$ an infinite subset.
Then there exist a complete field with an absolute value $k$ 
and an embedding $K\hookrightarrow k$ such that the image
of $I$ in $k$ is unbounded.
Furthermore, if $K$ is finitely generated as a field then $k$ could
be taken to be a local field.
\end{lemma}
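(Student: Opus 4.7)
The plan is a two-step reduction: first to the case where $K$ itself is finitely generated as a field, and then to a classical discreteness argument for rings of $S$-integers that produces a valuation of $K$ making $I$ unbounded.

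First I would replace $K$ by $K_0 := \operatorname{Frac}(R) \subseteq K$, which is finitely generated as a field since $R$ is a finitely generated ring. Applying the ``Furthermore'' clause to $K_0$ in place of $K$, I would obtain a local field $k_0$ and an embedding $\iota_0 : K_0 \hookrightarrow k_0$ in which $\iota_0(I)$ is unbounded. To pass back to $K$, I would take $k$ to be the metric completion $\widehat{\overline{k_0}}$ of an algebraic closure of $k_0$: this is a complete valued field that is algebraically closed and of uncountable transcendence degree over $\bbQ$, so the countable field $K$ embeds into $k$ compatibly with $\iota_0$. Since the absolute value on $k$ extends that on $k_0$, the image of $I$ remains unbounded.

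For the finitely generated case, I would produce a finite set $S$ of rank-one valuations of $K$ with $R \subseteq \mathcal{O}_{K,S} := \{x \in K : |x|_v \le 1 \text{ for all } v \notin S\}$, and argue by discreteness. If $K$ has transcendence degree zero over its prime field, then in characteristic zero $K$ is a number field, and $S$ can be taken to consist of all archimedean places together with the finitely many primes at which some chosen generator of $R$ fails to be integral; in positive characteristic and transcendence degree zero, $K$ is a finite field, so $R$ and a fortiori $I$ are finite, contradicting the hypothesis. If $K$ has positive transcendence degree, one instead fixes a projective model $\overline{X}$ over $\operatorname{Spec}\bbZ$ (or $\operatorname{Spec}\mathbb{F}_p$) with function field $K$ that contains $\operatorname{Spec} R$ as an open subscheme, and lets $S$ correspond to the codimension-one points of $\overline{X} \setminus \operatorname{Spec} R$, together with the archimedean places in characteristic zero. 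In all these cases the properness of the chosen compactification yields that the diagonal embedding $\mathcal{O}_{K,S} \hookrightarrow \prod_{v \in S} K_v$ has discrete image. Consequently, if $I$ were bounded in $K_v$ for every $v \in S$, its diagonal image would be relatively compact and discrete, hence finite — a contradiction. Some $v \in S$ therefore satisfies $\sup_{x \in I}|x|_v = \infty$, and $k := K_v$ is a local field doing the job.

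The main obstacle is the positive-transcendence-degree subcase of the discreteness of $\mathcal{O}_{K,S}$ in $\prod_{v \in S} K_v$, since $K$ is then not a global field and the classical ad\`elic machinery does not apply directly; the argument becomes geometric, resting on the properness of $\overline{X}$ (together with Noether normalization, or equivalently, a generalized product formula for finitely generated integral domains). Once this geometric input is in place, the remainder of the argument is by now folklore, going back to Tits and refined by Breuillard--Gelander.
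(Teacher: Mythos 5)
Your reduction of the general case to the finitely generated case is exactly the paper's argument: pass to $K_0=\operatorname{Frac}(R)$, obtain a local field $k_0$ with $\iota_0(I)$ unbounded, and embed the countable field $K$ into the completion of an algebraic closure of $k_0$, which is complete, algebraically closed, and of uncountable transcendence degree over $\iota_0(K_0)$. The difference is that the paper treats the finitely generated case as a black box, citing Breuillard--Gelander \cite{BreuillardGelander-Tits}*{Lemma 2.1} (after first enlarging $R$ to a finitely generated ring whose fraction field is all of $K$), whereas you attempt to prove it.

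Your proof of the finitely generated case has a genuine gap at exactly the step you flag as the main obstacle, and properness of $\overline{X}$ does not repair it. Two problems arise once $\operatorname{trdeg}K>0$. First, the completions $K_v$ at the codimension-one points of $\overline{X}\setminus\operatorname{Spec}R$ have infinite residue fields (function fields of divisors on the model), so they are not local fields; your argument would therefore not yield the ``Furthermore'' clause, which is precisely what you feed back into the first reduction. Second, and more seriously, the claimed discreteness of $\mathcal{O}_{K,S}$ in $\prod_{v\in S}K_v$ is false: take $K=\mathbb{Q}(t)$, $R=\mathbb{Z}[t]$, $\overline{X}=\mathbb{P}^1_{\mathbb{Z}}$, so that $S$ consists of the degree valuation $v_\infty$ (the divisor at infinity) together with an archimedean absolute value obtained by sending $t$ to some transcendental real $\alpha$. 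The infinite set $\{a+bt:\ a,b\in\mathbb{Z},\ |a+b\alpha|\le 1\}$ has degree at most $1$, hence is bounded at $v_\infty$, and is bounded at the archimedean place by construction; so the image of $R$ is not discrete. The structural reason is that $R$ has infinite rank over $\mathbb{Z}$ and cannot be a lattice in a finite product of completions; relatedly, ``the archimedean places'' of $K$ form an uncountable family once $K$ is transcendental over $\mathbb{Q}$, and the right one must be chosen \emph{depending on} $I$ (sending $t$ to a different transcendental number makes the set above unbounded). This is why the Tits/Breuillard--Gelander argument selects the embedding adaptively rather than fixing a finite set of places once and for all. (A minor further point: before speaking of a model containing $\operatorname{Spec}R$ as an open subscheme with function field $K$, you must enlarge $R$ so that $\operatorname{Frac}(R)=K$, as the paper does.)
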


\begin{proof}
The case where $K$ is the fraction field of $R$ is dealt with in
\cite[Lemma 2.1]{BreuillardGelander-Tits}.
If $K$ is finitely generated, say by a finite set $A$,
upon replacing $R$ with the ring $R'$ generated by $A\cup B$ where
$B$ is a finite set of generators of $R$, we are reduced to the previous
case.
The general case follows from the case where $K$ is finitely generated as follows.
Denote by $K_0$ the fraction field of $R$ and observe that it is finitely 
generated as a field.
Let $k_0$ be a corresponding local field such that
there exists an embedding $i:K_0\hookrightarrow k_0$ with $i(I)$ unbounded. 
Let $\bar{k}_0$ be an algebraic closure of $k_0$ and note that the absolute value of $k_0$ extends uniquely to
an absolute value on $\bar{k}_0$ \cite{BGR}*{\S 3.2.4, Theorem 2}.
Let $k$ be the completion of $\bar{k}_0$ with respect to this absolute value.
Note that $k$ is an algebraically closed field \cite{BGR}*{\S 3.4.1, Proposition 3} which is of uncountable transcendental degree over its prime field.
Denote by $j$ the composition $K_0\hookrightarrow k_0\hookrightarrow k$
and note that $j(I)$ is unbounded.
As $K_0$ is countable, $k$ is of uncountable transcendental degree over $j(K_0)$.
As $K$ is countable and $k$ is algebraically closed and of uncountable transcendental degree over $j(K_0)$, $j$ extends to an the embedding $K \hookrightarrow k$.
\end{proof}

\begin{lemma} \label{lem:polishfield}
Let $K$ be a field and ${\bf G}$ a $K$-algebraic group.
Let $\Gamma$ be a subgroup of ${\bf G}(K)$ which is not locally finite.
Then there exists a complete field with an absolute value $k$ 
and an embedding $K\hookrightarrow k$ such that the image of $\Gamma$ is unbounded in ${\bf G}(k)$. 
Furthermore, if the field $K$ is finitely generated then the field $k$ could be taken to be a local field.
\end{lemma}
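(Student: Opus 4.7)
The plan is to extract from $\Gamma$ an infinite subset of a finitely generated subring of $K$ and then invoke Lemma~\ref{lem:BG} to make it unbounded in a suitable complete valued extension of $K$.

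Since $\Gamma$ is not locally finite, it contains a finitely generated infinite subgroup $\Gamma_0$. I would first fix a closed $K$-embedding $\iota:\mathbf{G}\hookrightarrow \GL_N$ (available since $\mathbf{G}$ is linear), and recall from \cite{BDL}*{\S 4} that under such a closed embedding a subset of $\mathbf{G}(k)$ is bounded iff its matrix entries and inverse determinants are bounded in $k$. Next, choose a finite symmetric generating set $S\subset \Gamma_0$ and let $R\subset K$ be the (finitely generated) subring generated by the matrix entries of $\iota(s)$, $s\in S$. Then $\iota(\Gamma_0)\subset M_N(R)$, since $M_N(R)$ is closed under multiplication. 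Because $\iota$ is injective and $\Gamma_0$ is infinite, the pigeonhole principle yields a coordinate $(i,j)$ for which
\[
	I=\{\iota(\gamma)_{ij}\mid \gamma\in\Gamma_0\}\subset R
\]
is infinite.

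Finally, Lemma~\ref{lem:BG} applied to $I\subset R\subset K$ produces a complete valued field $k$ and an embedding $K\hookrightarrow k$ such that the image of $I$ in $k$ is unbounded. Consequently the $(i,j)$-entries of $\iota(\Gamma_0)$ are unbounded in $k$, so $\Gamma$ is unbounded in $\mathbf{G}(k)$. The furthermore clause will follow verbatim, invoking the corresponding furthermore of Lemma~\ref{lem:BG}: if $K$ itself is finitely generated as a field, then $k$ can be taken to be a local field. The only delicate point will be reconciling entry-wise unboundedness in $\GL_N(k)$ with unboundedness in the intrinsic bornology of $\mathbf{G}(k)$, but this is immediate from \cite{BDL}*{\S 4}, since a closed embedding of $K$-affine varieties induces a bornology-preserving map on $k$-points.
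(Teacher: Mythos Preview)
Your proposal is correct and follows essentially the same strategy as the paper's proof: extract a finitely generated infinite subgroup, embed $\mathbf{G}$ into some $\GL_N$, collect matrix entries into a finitely generated subring $R$, find an infinite subset $I\subset R$, and apply Lemma~\ref{lem:BG}. The only cosmetic differences are that the paper takes $I$ to be the full set of matrix coefficients of $\Gamma'$ (rather than a single pigeonholed entry) and is terser about why $R$ is finitely generated and why entrywise unboundedness implies unboundedness in $\mathbf{G}(k)$; your added justification via the symmetric generating set and the reference to \cite{BDL}*{\S4} makes these points explicit.
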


\begin{proof}
We let $\Gamma'<\Gamma$ be an infinite finitely generated group.
We fix an injective $K$-representation ${\bf G}\to \GL_n$, let $I\subset K$
be the set of matrix coefficients of $\Gamma'$ and let $R<K$ be the subring
generated by $I$. Note that $I$ is infinite and $R$ is finitely generated.
We let $K\hookrightarrow k$ be a field extension as provided by Lemma~\ref{lem:BG} and note that the image of $\Gamma$ is unbounded
in ${\bf G}(k)$, as the image of $\Gamma'$ is unbounded in $\GL_n(k)$.
\end{proof}

%


%

\begin{lemma} \label{lem:Zdense}
Fix a field $K$ and a connected adjoint $K$-simple algebraic group ${\bf G}$.
Let $\Lambda<{\bf G}(K)$ be a Zariski dense subgroup.
Let $\Gamma<\Lambda$ a commensurated subgroup.
Assume $\Gamma$ is infinite.
Then $\Gamma$ is Zariski dense in ${\bf G}$.
\end{lemma}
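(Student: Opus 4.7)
The plan is to let $\mathbf{H}$ be the Zariski closure of $\Gamma$ in $\mathbf{G}$ (which is a $K$-subgroup, since $\Gamma \subset \mathbf{G}(K)$) and to show that its identity component $\mathbf{H}^0$ is a connected normal $K$-subgroup of $\mathbf{G}$; then $K$-simplicity forces $\mathbf{H}^0 \in \{\{e\}, \mathbf{G}\}$, and the first possibility is ruled out by the assumption that $\Gamma$ is infinite.

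The key step is to exploit the commensurating hypothesis to show that $\Lambda$ normalizes $\mathbf{H}^0$. For any $\lambda \in \Lambda$, the group $\Gamma_\lambda = \Gamma \cap \lambda\Gamma\lambda^{-1}$ has finite index in $\Gamma$, so its Zariski closure $\overline{\Gamma_\lambda}^Z$ has finite index in $\mathbf{H}$, and in particular contains $\mathbf{H}^0$. On the other hand, $\Gamma_\lambda \subset \lambda\Gamma\lambda^{-1}$, so $\overline{\Gamma_\lambda}^Z \subset \lambda\mathbf{H}\lambda^{-1}$. Hence $\mathbf{H}^0 \subset \lambda\mathbf{H}\lambda^{-1}$, and by connectedness $\mathbf{H}^0 \subset \lambda\mathbf{H}^0\lambda^{-1}$. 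Applying the same argument with $\lambda$ replaced by $\lambda^{-1}$ yields the reverse inclusion, so $\mathbf{H}^0 = \lambda\mathbf{H}^0\lambda^{-1}$ for every $\lambda \in \Lambda$.

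Thus $\Lambda \subset N_{\mathbf{G}}(\mathbf{H}^0)(K)$. Since $\mathbf{H}^0$ is defined over $K$, its normalizer $N_{\mathbf{G}}(\mathbf{H}^0)$ is a Zariski closed $K$-subgroup of $\mathbf{G}$; the Zariski density of $\Lambda$ in $\mathbf{G}$ therefore gives $N_{\mathbf{G}}(\mathbf{H}^0) = \mathbf{G}$, i.e. $\mathbf{H}^0$ is a connected normal $K$-subgroup of $\mathbf{G}$. By $K$-simplicity of $\mathbf{G}$, either $\mathbf{H}^0 = \mathbf{G}$, in which case $\mathbf{H} = \mathbf{G}$ and $\Gamma$ is Zariski dense, or $\mathbf{H}^0 = \{e\}$, in which case $\mathbf{H}$ is finite and so is $\Gamma$, contradicting the assumption.

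The only point that might require some care is the claim that the Zariski closure of a subset of $\mathbf{G}(K)$ is automatically defined over $K$ (so that $\mathbf{H}$, and therefore $\mathbf{H}^0$, are $K$-subgroups to which $K$-simplicity applies); this is a standard fact because the set $\Gamma \subset \mathbf{G}(K)$ is pointwise fixed by $\operatorname{Gal}(K^{\mathrm{sep}}/K)$, so its Zariski closure is Galois-stable. Everything else is a routine manipulation with the commensurating property and connected components, so I expect no substantial obstacle.
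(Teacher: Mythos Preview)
Your proof is correct and follows essentially the same approach as the paper's: both pass to the identity component $\mathbf{H}^0$ of the Zariski closure of $\Gamma$, use the commensurating hypothesis together with connectedness to show that $\Lambda$ normalizes $\mathbf{H}^0$, and then invoke Zariski density of $\Lambda$ and $K$-simplicity of $\mathbf{G}$. One small caveat: your Galois-stability justification for the $K$-definedness of $\mathbf{H}$ is only adequate when $K$ is perfect; for arbitrary $K$ the paper instead cites \cite[AG, Theorem~14.4]{borel}, which gives the result directly.
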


\begin{proof}
Consider the Zariski closure ${\bf H}=\overline{\Gamma}^Z$
and let ${\bf H}^0$ be its identity connected component.
By \cite[AG, Theorem~14.4]{borel} $\mathbf{H}$  is defined over $K$ and $\Gamma<\mathbf{H}(K)$.
We set $\Gamma'=\Gamma \cap {\bf H}^0(K))$.
As ${\bf H}^0(K)<{\bf H}(K)$ is of finite index, $\Gamma'<\Gamma$
is of finite index.
It follows that $\Gamma'<\Lambda$ is commensurated.
Thus ${\bf H}^0<{\bf G}$ is commensurated.
As the connected group ${\bf H}^0$ has no Zariski closed subgroups of finite index, 
we conclude that ${\bf H}^0$ is normal in ${\bf G}$.
By the assumption that $\Gamma$ is infinite we get that ${\bf H}^0$ is
non-trivial.
By the simplicity of ${\bf G}$ we conclude that ${\bf H}^0={\bf G}$.
In particular ${\bf H}={\bf G}$.
Thus indeed, $\Gamma$ is Zariski dense in ${\bf G}$.
\end{proof}

\begin{lemma} \label{lem:amen}
Let $k$ be a field with absolute value and ${\bf G}$ a semisimple algebraic group defined over $k$.
Let $R$ be a topological group which is separable and amenable\footnote{Recall that through out this paper we use the convention that a topological group is amenable
if every continuous action of it on a compact space admits an invariant measure
(note that various notions of amenability which are equivalent for lcsc groups need not coincide in general).}
and let $\rho:R \to \bfG(k)$ be a continuous homomorphism with Zariski dense image.
Then $\rho(R)$ is bounded in $\bfG(k)$.
\end{lemma}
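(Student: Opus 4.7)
The plan is to pass to the closure of the image and then combine \cite{BDL}*{Proposition 1.9} with the amenability hypothesis. First I would set $L := \overline{\rho(R)} \leq \mathbf{G}(k)$ and check that $L$ is amenable in the sense of the footnote: since $R$ is amenable and $\rho$ is continuous, the image $\rho(R)$ is amenable, and the closure inherits amenability because the (nonempty, convex, weak-$*$ closed) set of $\rho(R)$-invariant probability measures on any continuous compact $L$-space is automatically $L$-invariant. Thus $L$ is a closed, amenable, Zariski-dense subgroup of $\mathbf{G}(k)$.

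Next I would apply \cite{BDL}*{Proposition 1.9} to $L$, extracting a $k$-subgroup $\mathbf{H} < \mathbf{G}$ normalized by $L$ such that (a) the image of $L$ in $N_\mathbf{G}(\mathbf{H})/\mathbf{H}(k)$ is bounded and (b) every $L$-invariant finite Borel measure on the $k$-points of a $k$-$\mathbf{G}$-variety is supported on the subvariety of $\mathbf{H}$-fixed points. Zariski density of $L$ in $\mathbf{G}$ combined with $L \subseteq N_\mathbf{G}(\mathbf{H})$ forces $\mathbf{H}$ to be a normal $k$-subgroup of $\mathbf{G}$, so, as $\mathbf{G}$ is semisimple, $\mathbf{H}$ is an almost-direct product of some of the simple factors of $\mathbf{G}$. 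The desired boundedness of $\rho(R)$ will follow from (a) as soon as I can show $\mathbf{H} = \{e\}$.

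To achieve this I would argue by contradiction: assume $\mathbf{H} \neq \{e\}$, and reduce to the case where $\mathbf{G}$ is $k$-isotropic (otherwise $\mathbf{G}(k)$ is already bounded). Pick a proper $k$-parabolic $\mathbf{P} < \mathbf{G}$. Since $\mathbf{H}$ is a positive-dimensional semisimple normal subgroup of $\mathbf{G}$, the intersection $\mathbf{H}\cap\mathbf{P}$ is a proper parabolic of $\mathbf{H}$, so $\mathbf{H} \not\subseteq \mathbf{P}$, and the normality of $\mathbf{H}$ then shows $\mathbf{H}$ has no fixed point on $\mathbf{G}/\mathbf{P}$. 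Using the amenability of $L$, I would produce an $L$-invariant probability measure on a natural compact $\mathbf{G}$-space mapping to $(\mathbf{G}/\mathbf{P})(k)$ and push it forward to an $L$-invariant measure on this $k$-$\mathbf{G}$-variety; property (b) of \cite{BDL}*{Proposition 1.9} then forces this measure to live on the empty $\mathbf{H}$-fixed locus, a contradiction. The main obstacle is constructing this $L$-invariant measure when $k$ is not a local field, since $(\mathbf{G}/\mathbf{P})(k)$ need not be compact; this is to be handled via the Bruhat--Tits building and the bornological machinery developed in \cite{BDL}, which provide a suitable compact substitute on which $\mathbf{G}(k)$ acts continuously.
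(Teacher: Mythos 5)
Your plan uses the right toolkit (amenability plus \cite{BDL}*{Proposition 1.9}), and the opening reductions (passing to $L=\overline{\rho(R)}$, which is amenable, and noting that Zariski density forces $\mathbf{H}$ to be normal) match the paper. But the step you flag as ``the main obstacle'' is in fact the entire content of the lemma, and the way you set up the endgame cannot be completed as stated. The paper's substitute for the non-compact $(\mathbf{G}/\mathbf{P})(k)$ is not the Bruhat--Tits building but the compact space $S(E)$ of homothety classes of \emph{seminorms} on $E=k^n$, for a minimal-dimensional faithful irreducible $k$-representation of $\mathbf{G}$ (after first reducing to $\mathbf{G}$ adjoint and $k$-simple via \cite{BDL}*{Lemma 4.5}). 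Amenability produces an ergodic $R$-invariant probability measure $\mu$ on $S(E)$, and the partition of $S(E)$ by the dimension $d$ of the kernel of the seminorm forces a genuine dichotomy that your contradiction scheme does not anticipate: if $d>0$ one pushes $\mu$ to the Grassmannian $\Gr_d(E)$ and applies Proposition 1.9 together with irreducibility (no $\mathbf{G}$-fixed points in $\Gr(E)$) to get $\mathbf{H}=\{e\}$ and hence compactness of $L$; but if $d=0$ the measure lives on the metric space $I(E)$ of norm classes, which maps to \emph{no} $k$-$\mathbf{G}$-variety, so no invariant measure on a variety is produced and no contradiction with the $\mathbf{H}$-fixed-point property arises. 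In that case boundedness is proved directly: a ball $B\subset I(E)$ with $\mu(B)>1/2$ meets all its $L$-translates, so $LB$ is bounded and $L$ lies in the (bounded) stabilizer of a bounded set. Your proof, which assumes $\mathbf{H}\neq\{e\}$ and seeks an invariant measure on a flag variety, would simply stall in this case rather than reach a contradiction.

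Two smaller points. First, your reduction ``otherwise $\mathbf{G}(k)$ is already bounded'' for $k$-anisotropic $\mathbf{G}$ is a theorem over local fields but is not available off the shelf for a general complete valued field; the paper avoids needing it. Second, the claim that $\mathbf{H}\cap\mathbf{P}$ is a proper parabolic of $\mathbf{H}$ for every proper $k$-parabolic $\mathbf{P}$ of $\mathbf{G}$ fails when $\mathbf{H}$ is a normal factor contained in a Levi-type configuration; this is sidestepped in the paper by first reducing to the $k$-simple case, where a nontrivial normal $\mathbf{H}$ is all of $\mathbf{G}$.
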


This lemma is essentially claimed in \cite[Corollary~1.18]{BDL} and it indeed follows
from the considerations of \cite{BDL}. 
However, \cite[Corollary~1.18]{BDL} is not a formal corollary of  \cite[Corollary~1.17]{BDL} as claimed in \cite{BDL},
as the latter considers merely locally compact groups.
We thank the anonymous referee for spotting this gap,
which is closed by the proof below.

\begin{proof}
We assume as we may that $k$ is non-discrete, complete and separable as a metric space.
We argue as in the proof of \cite[Theorem~6.1]{BDL}.
By \cite[Lemma~4.5]{BDL} we may assume ${\bf G}$ is adjoint
and, upon replacing $R$ with the product of its projections to the various simple factors, the problem is easily reduced to the case where ${\bf G}$ is $k$-simple.
We thus assume ${\bf G}$ is indeed $k$-simple.

Using \cite[Proposition~1.10]{borel} there exists a $k$-closed immersion of ${\bf G}$ into some $\GL_n$. 
Fixing a minimum dimensional such representation, using the fact that ${\bf G}$ is $k$-simple, we assume as we may that this representation is $k$-irreducible.
By the fact that ${\bf G}$ is adjoint, the associated morphism ${\bf G}\to \PGL_n$ is a closed immersion as well.
We will denote for convenience $E=k^n$.
Via this representation, ${\bf G}(k)$  acts continuously and faithfully on the metric space of homothety classes of norms, $I(E)$,
and on the compact space of homothety classes of seminorms, $S(E)$, introduced in \cite[\S5]{BDL}.

Using the amenability of $R$ there exits an $R$-invariant ergodic probability measure
$\mu$ on $S(E)$ which we now fix.
This measure is also fixed by the closure of $\rho(R)$,
thus, upon replacing $R$ with the closure of $\rho(R)$, we assume as we may that $R$ is a closed subgroup of $\bfG(k)$.
By \cite[Proposition~5.4]{BDL}, there is a ${\bf G}(k)$-invariant measurable partition $S(E)=\bigcup_{d=0}^{n-1} S_d(E)$, 
given by the dimension of the kernels of the seminorms.
By ergodicity $\mu$ is supported on $S_d(E)$ for some $0\leq d \leq n-1$.

We assume now $d>0$.
We denote by $\Gr_d(E)$ the Grassmannian  of $d$-dimensional $k$-subspaces of $E$ and consider 
the map $S_d(E)\to \Gr_d(E)$ taking a seminorm to its kernel. This map is measurable by \cite[Proposition~5.4]{BDL}.
Pushing forward the measure $\mu$ we obtain an $R$-invariant measure $\nu$ on $\Gr_d(E)$.
By \cite[Proposition~1.9]{BDL} there exists a $k$-subgroup ${\bf H}<{\bf G}$
which is normalized by $R$ such that the image of $R$ is precompact in $N_{\bf G}({\bf H})/{\bf H}(k)$ 
and $\nu$ is supported on the set of 
${\bf H}$-fixed points in $\Gr_d(E)$. As $R<{\bf G}$ is Zariski dense and ${\bf G}$ is $k$-simple, 
we deduce that either ${\bf H}=\{e\}$ or ${\bf H}={\bf G}$. 
By the irreducibility of our chosen representation there are no 
${\bf G}$-fixed points in $\Gr(E)$ and we conclude that ${\bf H}=\{e\}$. 
It follows that $R$ is compact in ${\bf G}(k)$ and, in particular, it is bounded.

Next we assume $d=0$,
that is $\mu$ is supported on $I(E)$.
By \cite[Lemma~5.1]{BDL} $I(E)$ is a metric space on which ${\bf G}(k)$ acts continuously and isometrically and the stabilizers of bounded subsets of $I(E)$ are bounded in ${\bf G}(k)$.
We find a ball $B\subset I(E)$ such that $\mu(B)>1/2$.
It follows that for any $g\in R$, $gB$ intersects $B$.
Thus the set $RB$ is bounded in $I(E)$.
It follows that the stabilizer in ${\bf G}(k)$ of the set $RB$ is bounded.
As $R$ is contained in this stabilizer, we conclude that $R$ is indeed bounded.
\end{proof}


\begin{proof}[Proof of {Theorem~\ref{lem:nonlinearity}}]
We first observe the easier implication $(1)\Rightarrow (2)$.
We assume by contradiction having a finite index subgroup $\Lambda'<\Lambda$, a complete field with absolute value $k$, a connected adjoint $k$-simple algebraic group ${\bf G}$ and group homomorphism $\rho:\Lambda'\to {\bf G}(k)$ such that $\rho(\Lambda')$ is
Zariski dense in ${\bf G}$ and $\rho(\Gamma\cap\Lambda')$ is unbounded in ${\bf G}(k)$.
We set $\Gamma'=\Gamma\cap\Lambda'$ and observe that
$\Gamma'<\Gamma$ is of finite index.
Note that $\rho(\Gamma')$ is unbounded in ${\bf G}(k)$,
hence infinite, and it is
commensurated by the Zariski dense subgroup $\rho(\Lambda')$.
By Lemma~\ref{lem:Zdense} we get that $\rho(\Gamma')$ is
Zariski dense in ${\bf G}$.
By Lemma~\ref{lem:amen} we conclude that $\rho(\Gamma')$ is not
amenable.
By embedding ${\bf G}(k)$ in $\GL_{d}(k)$
we consider $\rho$ as a linear representation $\rho:\Lambda'\to \GL_{d}(k)$.
We induce $\rho$ from $\Lambda'$ to $\Lambda$ and obtain
a representation $\phi:\Lambda\to \GL_{d'}(k)$.
$\phi(\Gamma)$ is not amenable as it contains the non-amenable subgroup $\phi(\Gamma')\simeq \rho(\Gamma')$.
It follows that $\phi(\Gamma)$ is not solvable-by-locally finite.
This completes the proof of $(1)\Rightarrow (2)$

We now focus on the implication $(2)\Rightarrow (1)$.
We assume by contradiction having a field $K$, an integer $d$ and a group 
homomorphism $\phi:\Lambda \to \GL_d(K)$ such that $\phi(\Gamma)$ is not solvable-by-locally finite.
We assume as we may that $K$ is a countable field, and in case that 
$\Lambda$ is finitely generated we assume also that $K$ is finitely generated.
Tits' Theorem \cite{Tits}*{Theorems 1 and 2} implies that $\phi(\Gamma)$ is not amenable.
We let $\mathbf{H}$ be the Zariski closure of $\phi(\Lambda)$. 
By \cite[AG, Theorem~14.4]{borel} $\mathbf{H}$  is defined over $K$ and $\phi(\Lambda)<\mathbf{H}(K)$.
We denote by $\mathbf{H}^0$ the identity connected component in $\mathbf{H}$
and observe that $\mathbf{H}^0(K)<\mathbf{H}(K)$ is of finite index.
We set $\Lambda'=\phi^{-1}(\mathbf{H}^0(K))$.
Note that $\Lambda'<\Lambda$ is of finite index and the Zariski closure of $\phi(\Lambda')$ is $\mathbf{H}^0$, 
as it is contained in the connected group $\mathbf{H}^0$ and it is of finite index in $\mathbf{H}$.

We set $\Gamma'=\Lambda'\cap\Gamma$.
Observe that $\Gamma'<\Gamma$ is of finite index.
In particular, $\Gamma'<\Lambda'$ is commensurated and $\phi(\Gamma')$
is not amenable.
We let $\mathbf{R}$ be the solvable radical of $\mathbf{H}^0$ and set $\mathbf{L}=\mathbf{H}^0/\mathbf{R}$, $\pi:\mathbf{H}^0\to \mathbf{L}$.
As $\mathbf{R}(K)$ is solvable and $\phi(\Gamma')$ is not amenable, we conclude that $\pi\circ\phi(\Gamma')$ is not amenable.
In particular, $\mathbf{L}$ is non-trivial. Upon replacing $\mathbf L$ by its quotient modulo its center, which is finite, we may further assume that $\mathbf L$ is center-free.
Thus $\mathbf{L}$ is a connected, adjoint, semisimple $K$-group and $\pi\circ\phi(\Gamma')$ is Zariski dense in $\mathbf{L}$.
The adjoint group $\mathbf{L}$ is $K$-isomorphic to a direct product of finitely many connected, adjoint, $K$-simple $K$-groups.
The image of the projection of $\pi\circ\phi(\Gamma')$ to at least one of these factors is not amenable, and in particular not locally finite.
We fix such a $K$-simple factors of $\mathbf{L}$ and denote it
$\mathbf{G}$.
We let $\psi \colon \Lambda'\to \mathbf{G}(K)$ be the composition of $\pi\circ \phi$ with the projection $\mathbf{L}(K)\to \mathbf{G}(K)$
and conclude that $\psi(\Lambda')$ is Zariski dense and $\psi(\Gamma')$ is not locally finite in the connected adjoint $K$-simple $K$-algebraic group $\mathbf{G}$.
Using Lemma~\ref{lem:polishfield} we find a complete field with an absolute value $k$ 
and an embedding $K\hookrightarrow k$ such that the image of $\Gamma'$ is unbounded in ${\bf G}(k)$,
which is furthermore a local field if the field $K$ is finitely generated. 
We obtain a contradiction, as the image of $\Gamma'$ is unbounded under the composed map $\Lambda' \to {\bf G}(K)\to {\bf G}(k)$.
\end{proof}

\section{Proofs of Theorem~\ref{thm:nonlinearity}, and Corollaries~\ref{cor:nonlinearitycom}, \ref{cor:newamenable}} \label{sec:proffnl}

\begin{proof}[Proof of Theorem~\ref{thm:nonlinearity}]
Assume by contradiction that there exist an integer $d$, a field $K$ and a linear representation $\phi:\Gamma \to \GL_d(K)$ 
such that the image $\phi(\Gamma)$ is not solvable-by-locally finite.
Then, by Corollary~\ref{cor:nonlinearity},
there exist: a finite index subgroup $\Gamma'<\Gamma$, a complete field with an absolute value $k$, 
a connected adjoint $k$-simple algebraic group ${\bf G}$ and group homomorphism $\rho:\Gamma'\to {\bf G}(k)$ 
such that $\rho(\Gamma')$ is Zariski dense and unbounded in ${\bf G}(k)$.
In case $\Gamma$ is finitely generated $k$ could be taken to be a local field.

For every $i=1,\ldots,n$ let $T'_i=\overline{\pi_i(\Gamma')}<T_i$ be the closure of the projection of $\Gamma'$ in $T_i$,
and $T'=T'_1\times\cdots \times T'_n<T$ be a closed subgroup.
Since $\Gamma'<\Gamma$ has finite index, each $T'_i<T_i$ have finite index, and so $T'<T$ has finite index.
Note also that $\Gamma'<T'$ is a lattice with dense projections.
Applying Theorem~\ref{thm:lattice} we conclude that there exists a continuous homomorphism $\bar{\rho}:T' \to {\bf G}(k)$ 
such that $\bar{\rho}|_{\Gamma'}=\rho$. 
In particular, $\bar{\rho}(T')$ is Zariski dense and unbounded and so is its closure, $\overline{\bar{\rho}(T')}$.
By Lemma~\ref{lem:amen} we conclude that $\overline{\bar{\rho}(T')}$ is not amenable.

Fixing a $k$-representation ${\bf G}(k)\to \GL_{d'}(k)$ we get a continuous linear representation $T'\to \GL_{d'}(k)$.
Inducing this representation to $T$, we get a
continuous homomorphism $\psi:T\to \GL_{d''}(k)$ such that
$\overline{\psi(T)}$ is not amenable.
This is a contradiction.
\end{proof}

\begin{proof}[Proof of Corollary~\ref{cor:nonlinearitycom}]
Assume by contradiction that there exist an integer $d$, a field $K$ and a linear representation 
$\phi:\Lambda \to \GL_d(K)$ such that $\phi(\Gamma)$ is not solvable-by-locally finite.
By Theorem~\ref{lem:nonlinearity} we get
a finite index subgroup $\Lambda'<\Lambda$, a complete field with an absolute value $k$, 
a connected adjoint $k$-simple algebraic group ${\bf G}$ and a Zariski dense group homomorphism 
$\rho:\Lambda'\to {\bf G}(k)$ such that $\rho(\Gamma\cap\Lambda')$ is unbounded in ${\bf G}(k)$.
In case $\Lambda$ is finitely generated $k$ could be taken to be a local field.

We let $T'$ be the closure of $\Lambda'$ in $T$ and set $\Gamma'=
\Gamma\cap \Lambda'$.
Observe that $T'<T$ is a closed subgroup of finite index.
Thus $T'<T$ is open and $\Gamma'<T'$ is a lattice.
Furthermore, $\Lambda'$ is a countable dense subgroup of $T'$ containing and commensurating $\Gamma'$.
Applying Corollary~\ref{cor:com} we get
a continuous homomorphism $\bar{\rho}:T'\to {\bf G}(k)$
such that $\rho=\bar{\rho}|_{\Lambda'}$.
As $\rho(\Gamma')$ is unbounded we get that $\bar{\rho}(T')$ is unbounded.
By Lemma~\ref{lem:amen} we conclude that $\overline{\bar{\rho}(T')}$ is not amenable.

Fixing a $k$-representation ${\bf G}(k)\to \GL_{d'}(k)$ we get a continuous linear representation $T'\to \GL_{d'}(k)$.
Inducing this representation to $T$, we get a
continuous homomorphism $\psi:T\to \GL_{d''}(k)$ such that
$\overline{\psi(T)}$ is not amenable.
This is a contradiction.
\end{proof}

\begin{proof}[Proof of Corollary~\ref{cor:newamenable}]
Assume by contradiction that there exist an integer $d$, a field $K$ and a linear representation $\phi:\Gamma \to \GL_d(K)$ 
such that $\phi(\Gamma)$ is not solvable-by-locally finite.
In particular, by Tits' Theorem \cite{Tits}*{Theorems 1 and 2}, $\phi(\Gamma)$ is not amenable.
We claim that  $\phi$ is injective.
Assume by contradiction that $N=\ker \phi$ is non-trivial.
By the assumption 
that for each $i$, $\pi_i|\Gamma$ is injective and for each non-trivial closed normal subgroup of $T_i$
the corresponding quotient group is amenable, we get that $T_i/\overline{\pi_i(N)}$ is amenable.
This contradicts \cite[Theorem~3.7(iv)]{BS}, which guarantees that then $\phi(\Gamma)\simeq \Gamma/N$ is amenable.
We conclude that, indeed, $\phi$ is injective.

We set $S=T_2\times \cdots \times T_n$, let $U<S$ be a compact open subgroup and set $\Delta=\Gamma
\cap (T_1\times U)$. Note that $\Delta$ is a lattice in $T_1\times U$, as $\Gamma$ is a lattice in $T$ and
$T_1\times U$ is open in $T$.
As $U$ is compact, we deduce that $\pi_1(\Delta)<T_1$ is a lattice.
Since $U<S$ is commensurated, we get that $T_1\times U<T=T_1\times S$ is commensurated, thus $\Delta<\Gamma$ is commensurated.

As $\pi_1$ is injective on $\Gamma$, we may identify $\Gamma$ and
$\Delta$ with $\pi_1(\Gamma)$ and $\pi_1(\Delta)$ correspondingly, and 
consider them as subgroups of $T_1$.
Applying Corollary~\ref{cor:nonlinearitycom} in this setting,
we get that $\phi(\Delta)$ is solvable-by-locally finite, and in particular it is amenable.
By the injectivity of $\phi$ we conclude that $\Delta$ is amenable.
As $\Delta<T_1$ is a lattice, it follows that $T_1$ is amenable.
This is a contradiction.
\end{proof}


\begin{bibdiv}
\begin{biblist}

	\bib{BCL}{article}{
		   author={Bader, Uri},
		   author={Caprace, Pierre-Emmanuel},
		   author={L\'{e}cureux, Jean},
		   title={On the linearity of lattices in affine buildings and ergodicity of
		   the singular Cartan flow},
		   journal={J. Amer. Math. Soc.},
		   volume={32},
		   date={2019},
		   number={2},
		   pages={491--562},
		   issn={0894-0347},
		   review={\MR{3904159}},
		   doi={10.1090/jams/914},
	}

	\bib{BDL}{article}{
		AUTHOR = {Bader, Uri}, 
		Author={Duchesne, Bruno}, 
		Author={L\'{e}cureux, Jean},
		TITLE = {Almost algebraic actions of algebraic groups and
			applications to algebraic representations},
	  JOURNAL = {Groups Geom. Dyn.},
	  FJOURNAL = {Groups, Geometry, and Dynamics},
	    VOLUME = {11},
	      YEAR = {2017},
	    NUMBER = {2},
	     PAGES = {705--738},
	}

		\bib{BF:Margulis}{article}{
		AUTHOR = {Bader, Uri}, 
		Author={Furman, Alex},
		TITLE = {An extension of Margulis' Super-Rigidity Theorem},
		 YEAR = {2018},
		eprint={https://arxiv.org/abs/1810.01607},
	}
	
	\bib{AREA}{article}{
		AUTHOR = {Bader, Uri}, 
		Author={Furman, Alex},
		TITLE = {Algebraic Representations of Ergodic Actions and Super-Rigidity},
 		YEAR = {2014},
		pages={1--25},
		eprint={https://arxiv.org/abs/1311.3696},
	}

\bib{BS}{article}{
    AUTHOR = {Bader, Uri}, 
    Author ={Shalom, Yehuda},
     TITLE = {Factor and normal subgroup theorems for lattices in products
              of groups},
   JOURNAL = {Invent. Math.},
  FJOURNAL = {Inventiones Mathematicae},
    VOLUME = {163},
      YEAR = {2006},
    NUMBER = {2},
     PAGES = {415--454},
      ISSN = {0020-9910},
   MRCLASS = {22E40 (28D15 43A05 43A07)},
  MRNUMBER = {2207022},
MRREVIEWER = {Alain Valette},
       URL = {https://doi.org/10.1007/s00222-005-0469-5},
}

\bib{BF-lyap}{article}{
   author={Bader, Uri},
   author={Furman, Alex},
   title={Boundaries, rigidity of representations, and Lyapunov exponents},
   conference={
      title={Proceedings of the International Congress of
      Mathematicians---Seoul 2014. Vol. III},
   },
   book={
      publisher={Kyung Moon Sa, Seoul},
   },
   date={2014},
   pages={71--96},
   review={\MR{3729019}},
}

\bib{BFS}{article}{
 	AUTHOR = {Bader, Uri}, 
 	Author={Furman, Alex},
 	Author={Sauer, Roman}
 	TITLE = {Lattice envelopes},
 	JOURNAL = {Duke Math. J.},
 	YEAR = {2019},
	note={to appear},
 	eprint={https://arXiv:1711.08410 },
}

\bib{b-z}{article}{
    AUTHOR = {Bern{\v{s}}te{\u\i}n, I. N.},
AUTHOR={Zelevinski{\u\i}, A. V.},
     TITLE = {Representations of the group {$GL(n,F),$} where {$F$} is a
              local non-{A}rchimedean field},
   JOURNAL = {Uspehi Mat. Nauk},
  FJOURNAL = {Akademiya Nauk SSSR i Moskovskoe Matematicheskoe Obshchestvo.
              Uspekhi Matematicheskikh Nauk},
    VOLUME = {31},
      YEAR = {1976},
    NUMBER = {3(189)},
     PAGES = {5--70},
      ISSN = {0042-1316},
   MRCLASS = {22E50},
  MRNUMBER = {0425030 (54 \#12988)},
MRREVIEWER = {G. I. Olsanskii},
}

\bib{borel}{book}{
    AUTHOR = {Borel, Armand},
     TITLE = {Linear algebraic groups},
    SERIES = {Graduate Texts in Mathematics},
    VOLUME = {126},
   EDITION = {Second},
 PUBLISHER = {Springer-Verlag},
   ADDRESS = {New York},
      YEAR = {1991},
     PAGES = {xii+288},
      ISBN = {0-387-97370-2},
   MRCLASS = {20-01 (20Gxx)},
  MRNUMBER = {1102012 (92d:20001)},
MRREVIEWER = {F. D. Veldkamp},
       DOI = {10.1007/978-1-4612-0941-6},
       URL = {http://dx.doi.org/10.1007/978-1-4612-0941-6},
}

\bib{BGR}{book}{
   author={Bosch, S.},
   author={G\"{u}ntzer, U.},
   author={Remmert, R.},
   title={Non-Archimedean analysis},
   series={Grundlehren der Mathematischen Wissenschaften [Fundamental
   Principles of Mathematical Sciences]},
   volume={261},
   note={A systematic approach to rigid analytic geometry},
   publisher={Springer-Verlag, Berlin},
   date={1984},
   pages={xii+436},
   isbn={3-540-12546-9},
   review={\MR{746961}},
   doi={10.1007/978-3-642-52229-1},
}
\bib{BreuillardGelander-Tits}{article}{
      author={Breuillard, E.},
      author={Gelander, T.},
       title={A topological {T}its alternative},
        date={2007},
     journal={Ann. of Math. (2)},
      volume={166},
      number={2},
       pages={427\ndash 474},
}

\bib{Bruhat+Tits}{article}{
   author={Bruhat, F.},
   author={Tits, J.},
   title={Groupes r\'{e}ductifs sur un corps local},
   language={French},
   journal={Inst. Hautes \'{E}tudes Sci. Publ. Math.},
   number={41},
   date={1972},
   pages={5--251},
   issn={0073-8301},
   review={\MR{0327923}},
}

\bib{BM}{article}{
    AUTHOR = {Burger, M.}, Author={Monod, N.},
     TITLE = {Continuous bounded cohomology and applications to rigidity
              theory},
   JOURNAL = {Geom. Funct. Anal.},
  FJOURNAL = {Geometric and Functional Analysis},
    VOLUME = {12},
      YEAR = {2002},
    NUMBER = {2},
     PAGES = {219--280},
      ISSN = {1016-443X},
     CODEN = {GFANFB},
   MRCLASS = {53C24 (22E41 46H25 46M20)},
  MRNUMBER = {1911660 (2003d:53065a)},
MRREVIEWER = {David Michael Fisher},
       DOI = {10.1007/s00039-002-8245-9},
       URL = {http://dx.doi.org/10.1007/s00039-002-8245-9},
}

\bib{CM09}{article}{
    AUTHOR = {Caprace, Pierre-Emmanuel}, Author={Monod, Nicolas}
     TITLE = {Isometry groups of non-positively curved spaces:
{D}iscrete subgroups},
   JOURNAL = {Journal of Topology},
    VOLUME = {2},
  NUMBER = {4},
      YEAR = {2009},
     PAGES = {701--746},
}

\bib{effros}{article}{
    AUTHOR = {Effros, Edward G.},
     TITLE = {Transformation groups and {$C^{\ast} $}-algebras},
   JOURNAL = {Ann. of Math. (2)},
  FJOURNAL = {Annals of Mathematics. Second Series},
    VOLUME = {81},
      YEAR = {1965},
     PAGES = {38--55},
      ISSN = {0003-486X},
   MRCLASS = {46.65},
  MRNUMBER = {0174987 (30 \#5175)},
MRREVIEWER = {J. M. G. Fell},
}

\bib{valued}{book}{
    AUTHOR = {Engler, Antonio J.}, Author={Prestel, Alexander},
     TITLE = {Valued fields},
    SERIES = {Springer Monographs in Mathematics},
 PUBLISHER = {Springer-Verlag},
   ADDRESS = {Berlin},
      YEAR = {2005},
     PAGES = {x+205},
      ISBN = {978-3-540-24221-5; 3-540-24221-X},
   MRCLASS = {12J20 (12F05 12J10 12J12 12J15)},
  MRNUMBER = {2183496 (2007a:12005)},
MRREVIEWER = {Niels Schwartz},
}

\bib{GKM}{article}{
    AUTHOR = {Gelander, Tsachik}, Author={Karlsson, Anders}, Author={Margulis, Gregory A.},
     TITLE = {Superrigidity, generalized harmonic maps and uniformly convex
              spaces},
   JOURNAL = {Geom. Funct. Anal.},
  FJOURNAL = {Geometric and Functional Analysis},
    VOLUME = {17},
      YEAR = {2008},
    NUMBER = {5},
     PAGES = {1524--1550},
      ISSN = {1016-443X},
     CODEN = {GFANFB},
   MRCLASS = {53C24 (22D12 22E40 58E20)},
  MRNUMBER = {2377496 (2009a:53074)},
MRREVIEWER = {Raul Quiroga-Barranco},
       DOI = {10.1007/s00039-007-0639-2},
       URL = {http://dx.doi.org/10.1007/s00039-007-0639-2},
}

\bib{GW}{article}{
    AUTHOR = {Glasner, E.}, Author={Weiss, B.},
     TITLE = {Weak mixing properties for nonsingular actions},
 JOURNAL = {preprint},
}

\bib{kaimanovich}{article}{
    AUTHOR = {Kaimanovich, V. A.},
     TITLE = {Double ergodicity of the {P}oisson boundary and applications
              to bounded cohomology},
   JOURNAL = {Geom. Funct. Anal.},
  FJOURNAL = {Geometric and Functional Analysis},
    VOLUME = {13},
      YEAR = {2003},
    NUMBER = {4},
     PAGES = {852--861},
      ISSN = {1016-443X},
     CODEN = {GFANFB},
   MRCLASS = {60G50 (28C10 28D15 37A20)},
  MRNUMBER = {2006560 (2004k:60128)},
MRREVIEWER = {Gernot Greschonig},
       DOI = {10.1007/s00039-003-0433-8},
       URL = {http://dx.doi.org/10.1007/s00039-003-0433-8},
}

\bib{kechris}{book}{
    AUTHOR = {Kechris, Alexander S.},
     TITLE = {Classical descriptive set theory},
    SERIES = {Graduate Texts in Mathematics},
    VOLUME = {156},
 PUBLISHER = {Springer-Verlag},
   ADDRESS = {New York},
      YEAR = {1995},
     PAGES = {xviii+402},
      ISBN = {0-387-94374-9},
   MRCLASS = {03E15 (03-01 03-02 04A15 28A05 54H05 90D44)},
  MRNUMBER = {1321597 (96e:03057)},
MRREVIEWER = {Jakub Jasi{\'n}ski},
       DOI = {10.1007/978-1-4612-4190-4},
       URL = {http://dx.doi.org/10.1007/978-1-4612-4190-4},
}

\bib{Lifschitz}{article}{
    AUTHOR = {Lifschitz, L.},
     TITLE = {Arithmeticity of rank-1 lattices with dense commensurators in positive characteristic},
   JOURNAL = {J. Algebra},
    VOLUME = {261},
      YEAR = {2003},
     PAGES = {44--53},
}

\bib{margulis-book}{book}{
    AUTHOR = {Margulis, G. A.},
     TITLE = {Discrete subgroups of semisimple {L}ie groups},
    SERIES = {Ergebnisse der Mathematik und ihrer Grenzgebiete (3) [Results
              in Mathematics and Related Areas (3)]},
    VOLUME = {17},
 PUBLISHER = {Springer-Verlag},
   ADDRESS = {Berlin},
      YEAR = {1991},
     PAGES = {x+388},
      ISBN = {3-540-12179-X},
   MRCLASS = {22E40 (20Hxx 22-02 22D40)},
  MRNUMBER = {1090825 (92h:22021)},
MRREVIEWER = {Gopal Prasad},
}

\bib{Monod-products}{article}{
   AUTHOR = {Monod, Nicolas},
     TITLE = {Superrigidity for irreducible lattices and geometric
              splitting},
   JOURNAL = {J. Amer. Math. Soc.},
  FJOURNAL = {Journal of the American Mathematical Society},
    VOLUME = {19},
      YEAR = {2006},
    NUMBER = {4},
     PAGES = {781--814},
      ISSN = {0894-0347},
   MRCLASS = {22F05 (20F65 22E40 53C24)},
  MRNUMBER = {2219304 (2007b:22025)},
MRREVIEWER = {David Michael Fisher},
       DOI = {10.1090/S0894-0347-06-00525-X},
       URL = {http://dx.doi.org/10.1090/S0894-0347-06-00525-X},
}

\bib{Monod}{article}{
    AUTHOR = {Monod, Nicolas},
     TITLE = {Arithmeticity vs. nonlinearity for irreducible lattices},
   JOURNAL = {Geom. Dedicata},
  FJOURNAL = {Geometriae Dedicata},
    VOLUME = {112},
      YEAR = {2005},
     PAGES = {225--237},
      ISSN = {0046-5755},
   MRCLASS = {22E40 (20G30)},
  MRNUMBER = {2163901},
MRREVIEWER = {Dave Witte Morris},
       URL = {https://doi.org/10.1007/s10711-004-6162-9},
}

\bib{Raghunathan}{article}{
   AUTHOR = {M.S. Raghunathan},
     TITLE = {Discrete subgroups of algebraic groups over local fields of positive characteristic},
   JOURNAL = {Proc.
Indian Acad. Sci},
    VOLUME = {99},
      YEAR = {1989},
     PAGES = {127--146},
}

\bib{Rosendal}{article}{
      author={Rosendal, Christian},
       title={Automatic continuity of group homomorphisms},
        date={2009},
     journal={Bull. Symbolic Logic},
      volume={15},
      number={2},
       pages={184-- 214},
}

\bib{serre}{book}{
    AUTHOR = {Serre, Jean-Pierre},
     TITLE = {Lie algebras and {L}ie groups},
    SERIES = {Lecture Notes in Mathematics},
    VOLUME = {1500},
      NOTE = {1964 lectures given at Harvard University,
              Corrected fifth printing of the second (1992) edition},
 PUBLISHER = {Springer-Verlag},
   ADDRESS = {Berlin},
      YEAR = {2006},
     PAGES = {viii+168},
      ISBN = {978-3-540-55008-2; 3-540-55008-9},
   MRCLASS = {17-01 (22-01)},
  MRNUMBER = {2179691 (2006e:17001)},
}

\bib{Tits}{article}{
   author={Tits, J.},
   title={Free subgroups in linear groups},
   journal={J. Algebra},
   volume={20},
   date={1972},
   pages={250--270},
   issn={0021-8693},
   review={\MR{0286898}},
   doi={10.1016/0021-8693(72)90058-0},
}

\bib{zimmer-book}{book}{
   author={Zimmer, R. J.},
   title={Ergodic theory and semisimple groups},
   series={Monographs in Mathematics},
   volume={81},
   publisher={Birkh\"auser Verlag},
   place={Basel},
   date={1984},
   pages={x+209},
   isbn={3-7643-3184-4},
   review={\MR{776417 (86j:22014)}},
}

\end{biblist}
\end{bibdiv}

\end{document}